\newtheorem{theorem}{Theorem}[section]
\newtheorem{lemma}[theorem]{Lemma}
\newtheorem{corollary}[theorem]{Corollary}
\newtheorem{proposition}[theorem]{Proposition}
\theoremstyle{remark}
\newtheorem{rem}[theorem]{Remark}
\newcommand{\finsum}[3]{
\underset{#1=#2}{\overset{#3}\sum}}
\newcommand{\dint}{
\displaystyle\int}
\newcommand{\dsum}{
\displaystyle\sum}
\newcommand{\inflim}[1]{
\underset{#1\to\infty}\lim}
\newcommand{\zsum}[1]{
\underset{#1\in\Z}\dsum}
\newcommand{\zsumd}[1]{
\underset{#1\in\Z^d}\dsum}
\newcommand{\R}{
\mathbb{R}}
\newcommand{\C}{
\mathbb{C}}
\newcommand{\N}{
\mathbb{N}}
\newcommand{\Z}{
\mathbb{Z}}
\newcommand{\supp}{
\textnormal{supp}}
\newcommand{\T}{
\mathbb{T}}
\newcommand{\schwartz}{
\mathscr{S}}
\newcommand{\bracket}[1]{
\left\langle#1\right\rangle}
\newcommand{\phica}{
\widehat{\phi_{\alpha,c}}}
\newcommand{\nutilde}{
\widetilde{\nu}}
\newcommand{\wtilde}{
\widetilde{\omega}}
\newcommand{\Lachat}{
\widehat{L_{\alpha,c}}}
 \newcommand{\Lh}{
 L_{\alpha,\frac{1}{h}}}
 \newcommand{\Lhhat}{
 \widehat{\Lh}}
 \newcommand{\chih}{
 \chi_{\alpha,h}}
 \newcommand{\chihhat}{
 \widehat{\chih}}
 \newcommand{\eps}{
 \varepsilon}
 \newcommand{\Mp}{
 \mathcal{M}_p}
 \newcommand{\floor}[1]{
 \lfloor#1\rfloor}
 \newcommand{\ceiling}[1]{
 \lceil#1\rceil}
\newcommand{\mah}{
m_{\alpha,h}}
\newcommand{\multi}[1]{[#1]}
 \title[Multiquadric Interpolation: Convergence Rates]{Cardinal Interpolation With General Multiquadrics: Convergence Rates}
\author{Keaton Hamm}
\address{Department of Mathematics, Vanderbilt University, Nashville, TN, 37212}
\email{keaton.hamm@vanderbilt.edu} 
\author{Jeff Ledford}
\address{Department of Mathematics and Applied Mathematics, Virginia Commonwealth University, Richmond, VA, 23284}
\email{jpledford@vcu.edu}
\subjclass[2010]{41A05, 41A25, 41A30,  41A63, 42B15}
  \keywords{Cardinal Interpolation, General Multiquadrics, Radial Basis Functions, Approximation Rates, Fourier Multipliers, Sobolev Functions}
 \thanks{Part of the work for this article was completed when the first author was a graduate student at Texas A\&M University, where he was partially supported by National Science Foundation grants DMS 1160633 and 1464713.  The second author was partially supported by the Workshop in Analysis and Probability at Texas A\&M University.}
\begin{document}
%\nocite{*}
%%%%%%%%%%%%%%%%%%%%%%%%%%%%%%%%%%%%%%%%%%%%%%%%%%%%%%%%%%%%%%%%%%%%%%%%%%%%
%          Abstract                                               %%%%%%%%%%
%%%%%%%%%%%%%%%%%%%%%%%%%%%%%%%%%%%%%%%%%%%%%%%%%%%%%%%%%%%%%%%%%%%%%%%%%%%%
\begin{abstract}

This article pertains to interpolation of Sobolev functions at shrinking lattices $h\Z^d$ from $L_p$ shift-invariant spaces associated with cardinal functions related to general multiquadrics, $\phi_{\alpha,c}(x):=(|x|^2+c^2)^\alpha$.  The relation between the shift-invariant spaces generated by the cardinal functions and those generated by the multiquadrics themselves is considered.  Additionally, $L_p$ error estimates in terms of the dilation $h$ are considered for the associated cardinal interpolation scheme.  This analysis expands the range of $\alpha$ values which were previously known to give such convergence rates (i.e. $O(h^k)$ for functions with derivatives of order up to $k$ in $L_p$, $1<p<\infty$).  Additionally, the analysis here demonstrates that some known best approximation rates for multiquadric approximation are obtained by their cardinal interpolants.
\end{abstract}
 
 \maketitle
\allowdisplaybreaks
%%%%%%%%%%%%%%%%%%%%%%%%%%%%%%%%%%%%%%%%%%%%%%%%%%%%%%%%%%%%%%%%%%%%%%%%%%%%
%          Introduction                                           %%%%%%%%%%
%%%%%%%%%%%%%%%%%%%%%%%%%%%%%%%%%%%%%%%%%%%%%%%%%%%%%%%%%%%%%%%%%%%%%%%%%%%%

\section{Introduction}

This article is primarily concerned with cardinal interpolation schemes associated with {\em general multiquadrics} in higher dimensions defined via two parameters as $\phi_{\alpha,c}(x):=(|x|^2+c^2)^\alpha$.  The two main objects of study are the principal shift-invariant spaces associated with either the multiquadric {\em cardinal functions} or the multiquadrics themselves, and the performance of the interpolants from these spaces for recovery of functions in the classical $L_p$ Sobolev spaces of finite smoothness.  Cardinal interpolation finds its origins in the work of Schoenberg (\cite{Schoenberg} and references therein), who studied interpolation at the integer lattice using splines.  Subsequent investigations ensued involving cardinal functions associated with radial basis functions (RBFs), including much work by Buhmann \cite{BuhmannOdd, Buhmann, BuhmannBook}, Baxter, Riemenschneider, and Sivakumar \cite{Baxter, RiemSiva,rs1,rs2, rs3, siva}, and the authors \cite{HammLedford}.   Some of the RBFs considered in those works are the thin plate spline, the Gaussian kernel, and the Hardy multiquadric.  Recently, \cite{Ledford} provided a general framework for recovering bandlimited functions from their samples at the (multi) integer lattice using so-called {\em regular families of cardinal interpolators}, of which certain families of Gaussians and multiquadrics are examples.

This article is a continuation of the study done in \cite{HammLedford}, in which cardinal interpolation in one dimension using  general multiquadrics was considered.  There, detailed estimates on the univariate cardinal functions were given, the behavior of the interpolation operators acting on $\ell_p$ spaces was considered, and a method for recovery of multivariate bandlimited functions from their multiquadric interpolants via a limiting process was shown.  One interesting problem for these interpolation schemes is to determine how quickly the interpolant of a function converges (globally) to the function based on its smoothness.   

Much work has been done on determining convergence rates for functions in the so-called {\em native space} of a given RBF (which for positive definite RBFs is the reproducing kernel Hilbert space with the RBF as the kernel).  When interpolating functions in the native space, convergence is often exponentially fast \cite{MadychNelson,Wendland}, however this space is often rather small.  Indeed, for the Gaussian kernel, the native space consists of functions whose Fourier transform satisfies $\widehat{f}e^{|\cdot|^2}\in L_2(\R^d)$ \cite[Theorem 10.12]{Wendland}. Consequently, it is desirable to determine the rate of approximation for more general classes of smooth functions.  Here, we consider interpolation of Sobolev ($W_p^k(\R^d)$) functions.  The inspiration for our work is the article of Hangelbroek, Madych, Narcowich, and Ward \cite{hmnw}, which provided convergence rates for Gaussian interpolation.

Often in the RBF literature, interpolants take the form $\sum_{j\in\Z^d}c_j\phi(\cdot-j)$, where $\phi$ is the given RBF.  However, associated with many RBFs are cardinal functions $L_\phi$ satisfying $L_\phi(k)=\delta_{0,k}$, $k\in\Z^d$, and so another interpolant is $\sum_{j\in\Z^d}a_jL_\phi(\cdot-j)$ where $a_j=f(j)$ for a given function $f$.  This brings up some natural questions of how the series above converge, and in what sense the interpolants are related.  In particular, for Sobolev functions to be considered here, the coefficients typically lie in $\ell_p(\Z^d)$, and so a natural object of study are the $L_p$ principal shift-invariant spaces associated with $\phi$ and $L_\phi$ (see Section \ref{SECBasic} for the precise definition of these spaces).  In many instances, it is easily shown that the shift-invariant spaces coincide (regardless of whether one defines them in terms of $\phi$ or $L_\phi$); however, for growing kernels, e.g. multiquadrics with positive $\alpha$, the matter is a bit more delicate since one of the spaces is not well-defined.  In many instances, one still has that $L_\phi = \sum_{j\in\Z^d}d_j\phi(\cdot-j)$ as an absolutely convergent series.%with the series converging uniformly on compact subsets of $\R^d$.  %For general considerations of RBF approximation methods (not necessarily involving interpolation) and the associated shift-invariant space structure, see \cite{BuhmannRon,deBoorRon, DDR,jia,Johnson,Johnson2,K}.    

The advent of shift-invariant space techniques to the approximation problem by de Boor, DeVore, and Ron \cite{deBoorRon,DDR} was important because (among many other things) it gave rise to optimal approximation rates for classes of functions which were much broader than the aforementioned native space of the RBF.  The power of these methods sparked a slew of ideas and further generalizations, including the work of Buhmann and Ron \cite{BuhmannRon}, Jia \cite{jia}, Johnson \cite{Johnson,Johnson2,Johnson3}, and Kyriazis \cite{K}.  Many of these references consider the best rates of approximation of smooth functions from shift-invariant spaces associated with different RBFs.  Our study here demonstrates that in most cases, the optimal approximation rates using multiquadrics can be achieved by the associated cardinal interpolants.

For more information on stable computation of multiquadric approximants, the interested reader is invited to consult the works of Driscoll, Fornberg, and Flyer (\cite{Fornberg,Fornberg2,FornbergFlyer} and references therein).  Additionally, for a prolonged discussion of other methods in use with many references, the reader may consult \cite{HammLedford}.  It should also be noted that \cite{BD} gives spectral approximation orders for multiquadric interpolation at $h\Z^n$ on compact domains for functions whose Fourier transforms satisfy a certain decay property.

The rest of the paper is laid out as follows.  In Section \ref{SECBasic}, we provide some preliminaries such as notation and facts about the general multiquadrics and shift-invariant spaces.  Section \ref{SECMain} details the statements of our main results and mentions some of the key ingredients; this section also contains the proof of the main theorem on approximation rates for Sobolev interpolation at the shrinking lattice $h\Z^d$ (Theorem \ref{THMmaintheorem}).   Section \ref{SECMultiplier} begins by discussing the Fourier multiplier associated with the multiquadric cardinal function, whose operator norm governs the rest of the analysis.  Pointwise and norm estimates are given for the multiplier operator as well.  Section \ref{SECProofs} contains the lion's share of the proofs of the main theorems of Section \ref{SECMain}, while the Appendix contains the distributional proof of one of the driving equations.  We conclude with some brief remarks and extensions in Section \ref{SECremark}.

%%%%%%%%%%%%%%%%%%%%%%%%%%%%%%%%%%%%%%%%%%%%%%%%%%%%%%%%%%%%%%%%%%%%%%%%%%%%
%          Basic Facts                                           %%%%%%%%%%
%%%%%%%%%%%%%%%%%%%%%%%%%%%%%%%%%%%%%%%%%%%%%%%%%%%%%%%%%%%%%%%%%%%%%%%%%%%%

\section{Preliminaries}\label{SECBasic}

Let $\Omega\subset\R^d$ be an open set. Then let $L_p(\Omega)$, $1\leq p\leq\infty$, be the usual space of $p$--integrable functions on $\Omega$ with its usual norm.  If no set is specified, we mean $L_p(\R^d)$.  Similarly, denote by $\ell_p$ the usual sequence spaces indexed by the (multi) integers. Throughout, $\gamma$ and $\beta$ will be multi-indices, with $D^\gamma$ taking on its usual meaning as the differential operator.  To mitigate confusion, the convention $[\gamma]:=\sum_{j=1}^d\gamma_j$ will be used to denote the length of the multi-index since the symbol $|\cdot|$ is reserved exclusively for the Euclidean distance on $\R^d$.  Define $W_p^k:=W_p^k(\R^d)$ to be the Sobolev space of functions in $L_p$ whose first $k$ weak derivatives are in $L_p$.  
The seminorm and norm on $W_p^k(\Omega)$ may be defined as
\[|g|_{W_p^k(\Omega)}:=\underset{\multi{\gamma}= k}\max\|D^\gamma g\|_{L_p(\Omega)},\quad\text{and}\quad
 \|g\|_{W_p^k(\Omega)}:=\|g\|_{L_p(\Omega)}+|g|_{W_p^k(\Omega)},\] respectively.

Let $\schwartz$ be the space of Schwartz functions on $\R^d$, that is the collection of infinitely differentiable functions $\psi$ such that for all multi-indices $\gamma$ and $\beta$,
$$\underset{x\in\R^d}\sup\left|x^\gamma D^\beta\psi(x)\right|<\infty\;.$$

The Fourier transform of a Schwartz function $\psi$ is given by
\[
 \widehat{\psi}(\xi):=\int_{\R^d} \psi(x)e^{-i\bracket{\xi, x}}dx,\quad \xi\in\R^d,
\]
whence the inversion formula is
\[
\psi^\vee(x) = \dfrac{1}{(2\pi)^d}\dint_{\R^d}\psi(\xi)e^{i\bracket{x,\xi}}d\xi,\quad x\in\R^d.
\]
In the event that these formulas do not hold (for instance for $L_p$ functions with $p>2$), the Fourier transform should be interpreted in the sense of tempered distributions.  Let $\schwartz'$ be the
space of tempered distributions (that is, the continuous dual of $\schwartz$).  Given $T\in\schwartz'$, its Fourier transform is the tempered distribution, $\widehat{T}$, which satisfies
$\bracket{\widehat{T},\phi}=\bracket{T,\widehat{\phi}},\;\phi\in\schwartz$.  For basic facts about distributions, consult \cite{fjoshi}.  The most used fact for the subsequent analysis is that if $f\in L_p$, then it may be identified with its {\em induced distribution}, $T_f\in\schwartz'$, via $\bracket{T_f,\psi}:=\int_{\R^d} f(x)\psi(x)dx$, $\psi\in\schwartz$.  Note that the integral is well-defined due to H\"{o}lder's inequality.  Additionally, the following basic fact will be utilized implicitly throughout the sequel:
\begin{lemma}\label{LEMDistribution}
If $f,g\in L_p$ and $\widehat{T_f}=\widehat{T_g}$, then $f=g$ almost everywhere.
\end{lemma}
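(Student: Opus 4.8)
The plan is to reduce the statement to the uniqueness of the zero distribution, transfer the vanishing to all Schwartz test functions via the bijectivity of the Fourier transform on $\schwartz$, and then conclude by a mollification argument. First set $h:=f-g\in L_p$; since the Fourier transform is linear on $\schwartz'$, the hypothesis $\widehat{T_f}=\widehat{T_g}$ becomes $\widehat{T_h}=0$, and it suffices to show $h=0$ almost everywhere. By the definition of the distributional Fourier transform, $\bracket{T_h,\widehat\psi}=\bracket{\widehat{T_h},\psi}=0$ for every $\psi\in\schwartz$. Because $\psi\mapsto\widehat\psi$ is a bijection of $\schwartz$ onto itself, every $\varphi\in\schwartz$ has the form $\widehat\psi$ for some $\psi\in\schwartz$, so $\bracket{T_h,\varphi}=\int_{\R^d}h(x)\varphi(x)\,dx=0$ for all $\varphi\in\schwartz$, and in particular for all $\varphi\in C_c^\infty(\R^d)\subset\schwartz$.

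It then remains to deduce $h=0$ a.e. from the fact that $h$ integrates to zero against every compactly supported test function. Observe first that $h\in L_1^{\mathrm{loc}}(\R^d)$: on any ball $B$, H\"older's inequality gives $\|h\|_{L_1(B)}\le|B|^{1/p'}\|h\|_{L_p(B)}<\infty$, where $p'$ denotes the conjugate exponent (with the convention $|B|^{1/\infty}=1$ when $p=1$). Fix a standard mollifier $\rho\in C_c^\infty(\R^d)$ with $\rho\ge0$ and $\int_{\R^d}\rho=1$, and put $\rho_\varepsilon(x):=\varepsilon^{-d}\rho(x/\varepsilon)$. For each fixed $x$, the map $y\mapsto\rho_\varepsilon(x-y)$ lies in $C_c^\infty(\R^d)$, so $(h*\rho_\varepsilon)(x)=\int_{\R^d}h(y)\rho_\varepsilon(x-y)\,dy=0$ for every $x\in\R^d$ and every $\varepsilon>0$. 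Since $h*\rho_\varepsilon\to h$ in $L_1^{\mathrm{loc}}(\R^d)$ as $\varepsilon\to0^+$ (a standard property of mollification for locally integrable functions), we conclude $h=0$ a.e., i.e. $f=g$ a.e.

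There is no substantial obstacle in this argument; the only mild point worth care is that one cannot simply invoke density of $\schwartz$ in the dual space $L_{p'}$, since this fails precisely when $p=1$ (as $\schwartz$ is not dense in $L_\infty$). This is circumvented by passing to the local picture: the inclusion $L_p\subset L_1^{\mathrm{loc}}$ holds for every $p\in[1,\infty]$, and vanishing against all of $C_c^\infty(\R^d)$ already forces $h=0$ a.e. via the mollification step above (equivalently, by the fundamental lemma of the calculus of variations).
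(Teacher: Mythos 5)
Your argument is correct and is essentially the paper's own proof, fully spelled out: the paper simply remarks that the lemma follows because an induced distribution vanishes precisely when the function vanishes almost everywhere, which is exactly what your reduction to $T_{f-g}=0$ (via bijectivity of the Fourier transform on $\schwartz$) plus the mollification step establishes. No issues.
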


The proof of the lemma follows from the fact that an induced distribution is the 0 distribution precisely when the function is 0 almost everywhere.  Finally, on account of Lemma \ref{LEMDistribution}, the common abuse of notation of writing $\widehat{f}$ for $\widehat{T_f}$ will be used.

For $\sigma>0$, define $E_\sigma$ to be the class of entire functions of
exponential type $\sigma$ whose restriction to $\R^d$ has at most
polynomial growth.  Namely, $f\in E_\sigma$ if and only if there is a constant $C$ and an $N\in\N$ such that
$$|f(z)|\leq C(1+|z|)^Ne^{\sigma |Im(z)|},\quad z\in\C^d.$$
Consequently, the restriction of $f$ to $\R^d$ is a tempered distribution, and the Paley-Wiener-Schwartz Theorem (see, for example, \cite[Theorem 7.23]{Rudin}) states that the distributional Fourier transform of $f$ has
support (in the distributional sense) in the ball of radius $\sigma$ centered at the origin, which we denote $B(0,\sigma)$.  The classes $E_\sigma$ are generalizations of the traditional Paley-Wiener spaces of bandlimited functions.

%%%%%%%%%%%%%%%%%%%%%%%%%%%%%%%%%%%%%%%%%%%%%%%%%%%%%%%%%%%%%%%%%%%%%%%%%%%%
%          Preliminaries                                          %%%%%%%%%%
%%%%%%%%%%%%%%%%%%%%%%%%%%%%%%%%%%%%%%%%%%%%%%%%%%%%%%%%%%%%%%%%%%%%%%%%%%%%

Let $\alpha\in\R$ and $c>0$ be fixed; then define the \textit{general multiquadric} by
\begin{equation}\label{EQgmcdef}
\phi_{\alpha,c}(x):=\left(|x|^2+c^2\right)^\alpha,\quad x\in\R^d.
\end{equation}
The parameter $c$ is often called the {\em shape parameter} of the multiquadric.
If $\alpha\in\R\setminus\N_0$ ($\N_0$ being the natural numbers including 0), the generalized Fourier transform of $\phi_{\alpha,c}$ is given by the following (see, for example, \cite[Theorem 8.15]{Wendland}):
\[
 \phica(\xi)=\dfrac{2^{1+\alpha}}{\Gamma(-\alpha)}\left(\dfrac{c}{|\xi|}\right)^{\alpha+\frac{d}{2}}K_{\alpha+\frac{d}{2}}(c|\xi|),\quad \xi\in\R^d\setminus\{0\},
\]
where $K_\nu$ is called the modified Bessel function of the second kind (see \cite[p.376]{AandS} for its precise definition).  A few properties germane to our analysis here are that $K_\nu$ has an algebraic singularity at the origin and exponential decay away from the origin.  We note that $\phi_{\alpha,c}$, and consequently its Fourier transform, are radial functions (i.e. $\phi_{\alpha,c}(x)=\varphi_{\alpha,c}(|x|)$ for some univariate function $\varphi_{\alpha,c}$). 

Much of the results presented here pertain to {\em principal shift-invariant} spaces which are subspaces of $L_p$.  Following \cite{AG}, these can be defined as
$$V_p(\psi):=\left\{\sum_{j\in\Z^d}c_j\psi(\cdot-j):(c_j)\in\ell_p\right\},$$
where convergence of the series is taken to be in $L_p$. The function $\psi$ is often called the {\em generator}, or the window, or kernel, of the shift-invariant space.  Note also that in some of the literature, the space is defined to be the closed linear span of $\{\psi(\cdot-j):j\in\Z^d\}$ in $L_p$; however, for sufficiently nice generators, the definitions coincide.  Additionally, shift-invariant spaces may be defined for more general lattices; specifically, we will make use of $V_p(\psi,h\Z^d):=\{\sum_{j\in\Z^d}c_j\psi(\cdot-hj):(c_j)\in\ell_p\}$ in the sequel.

%%%%%%%%%%%%%%%%%%%%%%%%%%%%%%%%%%%%%%%%%%%%%%%%%%%%%%%%%%%%%%%%%%%%%%%%%%%%%%%
%%%%%%%%%%%%%%%%%%%%%%%%%%%%%%%%%%%%%%%%%%%%%%%%%%%%%%%%%%%%%%%%%%%%%%%%%%%%%%%
%%%              Approximation Rates                          %%%%%%%%%%%%%%%%%
%%%%%%%%%%%%%%%%%%%%%%%%%%%%%%%%%%%%%%%%%%%%%%%%%%%%%%%%%%%%%%%%%%%%%%%%%%%%%%%
%%%%%%%%%%%%%%%%%%%%%%%%%%%%%%%%%%%%%%%%%%%%%%%%%%%%%%%%%%%%%%%%%%%%%%%%%%%%%%%

\section{Main Results}\label{SECMain}

For ease of viewing, the main results are all contained in this section.  We begin by setting some definitions.

A function $L:\R^d\to\R$ is a {\em cardinal function} provided $L(k)=\delta_{0,k}$, for all $k\in\Z^d$ (these are also often called Lagrange functions or fundamental functions in the literature).  There are cardinal functions associated with all manner of radial basis functions, and one common construction is to define them via their Fourier transforms.  The primary concern here is the cardinal function associated with the general multiquadric; to wit, for a fixed $\alpha\in\R\setminus\N_0$ and $c>0$, define
\[
\widehat{L_{\alpha,c}}(\xi):=\dfrac{\phica(\xi)}{\zsumd{j}\phica(\xi+2\pi j)},\quad\xi\in\R^d\setminus\{0\}.
\]
It was shown in \cite{HammLedford} that $\widehat{L_{\alpha,c}}\in L_1\cap L_2(\R^d)$; it follows that $L_{\alpha,c}:=\widehat{L_{\alpha,c}}^\vee$ is continuous, square-integrable, and a cardinal function (see Section 3 therein).

Suppose $g\in W_p^k(\R^d)$ with $k>d/p$ (thus pointwise evaluation of $g$ is well-defined since it is continuous by the Sobolev embedding theorem).  Let $h\in(0,1]$, and fix $\alpha\in(-\infty,-d-1/2)\cup[1/2,\infty)\setminus\N$.  Then formally define the multiquadric interpolant of $g$ via
\[
I_\alpha^h g(x):=\sum_{j\in\Z^d}g(hj)L_{\alpha,\frac1h}\left(\frac{x}{h}-j\right),\quad x\in\R^d.
\]
Presuming the interpolant is well-defined, it is evident that it satisfies $I_\alpha^hg(hk)=g(hk)$, $k\in\Z^d$ since $L_{\alpha,\frac1h}$ is a cardinal function.

In \cite[Theorem 8]{HammLedford}, it was shown that the univariate interpolation operator $I_\alpha^h$ is bounded from $\ell_p(\Z)\to L_p(\R)$ for every $1\leq p\leq\infty$ and $\alpha$ in the restricted range specified above.  However, the argument for higher dimensions is identical; indeed, in the course of the proof there, the authors essentially use the techniques of Jia and Micchelli \cite{JM} (see also \cite{Johnson}).  Consequently, since $g\in W_p^k(\R^d)$ implies that $(g(hj))_{j\in\Z^d}\in\ell_p$, it follows that $I_\alpha^hg\in L_p$, and particularly that $I_\alpha^h g\in V_p(L_{\alpha,\frac1h}(\cdot/h),h\Z^d)$.

It should be noted that approximation in such families of spaces have been studied extensively by Johnson and others (for example, \cite{Johnson}).  The family of subspaces $\{V_p(L_{\alpha,\frac1h}(\cdot/h),h\Z^d)\}_{h>0}$ is therein termed a {\em nonstationary ladder} of principal shift-invariant spaces.  In addition, the interested reader is referred to \cite{HoltzRon} for discussion of principal shift-invariant subspaces of $W_2^k(\R^d)$ and their approximation orders.

\subsection{Structural Results}

Before discussing the main interpolation results, we pause to mention some facts about the shift-invariant spaces associated with the multiquadric cardinal functions and how they relate to the spaces of translates of the multiquadrics themselves.  Often, RBF interpolation schemes begin by trying to find interpolants to a given class of functions from the closed linear span of translates of the RBF itself (e.g. the multiquadric), where the closure is taken, for example, in the topology of uniform convergence on compact subsets of $\R^d$.  Often in interpolation methods, the cardinal functions serve as a change of basis in the spirit of classical Lagrange polynomial interpolation, and one has $L$ as an element of the closed linear span of $\{\phi(\cdot-j):j\in\Z^d\}$.  The current analysis begins from the opposite vantage point, and considers interpolation from $V_p(L_{\alpha,c})$, and discusses how such spaces relate to their counterparts arising from $\phi_{\alpha,c}$.  %The techniques here allow for a conclusion to be made about the relation of these spaces in many instances, but is not strong enough in others (primarily for positive $\alpha$ which is not of the form $k-1/2$ for $k\in\N$).

The following two theorems demonstrate that for sufficiently negative $\alpha$, we may classify the structure of the cardinal functions and the decay rate of their coefficients, as well as showing equality of the associated shift-invariant spaces $V_p(L_{\alpha,c})$ and $V_p(\phi_{\alpha,c})$.

\begin{theorem}\label{L_coeff_bnd}
Suppose that $\alpha<-d-1/2$ and $c>0$.  Then $L_{\alpha,c}$ has a series representation of the form
\[
L_{{\alpha,c}}(x)=\sum_{j\in\Z^d} a_j \phi_{\alpha,c}(x-j),
\]
where
\[
|a_j|=\begin{cases} O(|j|^{-\lfloor 2|\alpha|-d\rfloor}) & \alpha\notin\Z, \\
O(|j|^{-2|\alpha|+d+1}) & \alpha\in\Z.
\end{cases}
\]
In particular, $a\in\ell_1$ and the series is uniformly convergent.
\end{theorem}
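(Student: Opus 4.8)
The plan is to identify the coefficient sequence $(a_j)$ with the Fourier coefficients of the periodic multiplier attached to $L_{\alpha,c}$, and to read off its decay from the regularity of that multiplier near $2\pi\Z^d$. Write $\sigma(\xi):=\sum_{j\in\Z^d}\phica(\xi+2\pi j)$ for the denominator in the definition of $\Lachat$. First I would note that, since $\phi_{\alpha,c}$ is positive definite for $\alpha<0$, the function $\phica$ is continuous, strictly positive, bounded with a finite nonzero value at the origin, and exponentially decaying; hence $\sigma$ is continuous, $2\pi\Z^d$-periodic, and bounded below by a positive constant, so $\Lachat/\phica=1/\sigma$ is a bounded continuous periodic function and admits a Fourier expansion $1/\sigma(\xi)=\sum_{j\in\Z^d}a_je^{-i\bracket{j,\xi}}$. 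Multiplying through by $\phica$ and inverting the Fourier transform termwise gives the claimed representation
\[
L_{\alpha,c}(x)=\sum_{j\in\Z^d}a_j\,\phi_{\alpha,c}(x-j).
\]
The termwise inversion is legitimate once $(a_j)\in\ell_1$ is known, since $\phi_{\alpha,c}\in L_1$ (because $\alpha<-d/2$) and $\phica\in L_1$, so the right-hand series converges absolutely in $L_1$ with the correct Fourier transform, and equality follows from Lemma \ref{LEMDistribution} and continuity of both sides; the distributional form of this identity is the one recorded in the Appendix.

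It then remains to estimate $a_j$ and to verify $(a_j)\in\ell_1$. Away from $2\pi\Z^d$ the function $\sigma$ is real-analytic, so the regularity of $1/\sigma$ is governed entirely by the behavior of $\phica$ near the origin. Setting $\nu:=|\alpha|-\tfrac d2$, so that $\nu>\tfrac{d+1}{2}\geq1$ because $\alpha<-d-\tfrac12$, and using $K_{\alpha+d/2}=K_\nu$ in the formula for $\phica$ from Section \ref{SECBasic}, I would substitute the small-argument series of the modified Bessel function $K_\nu$ to obtain, near $\xi=0$,
\[
\phica(\xi)=A(|\xi|^2)+
\begin{cases}
|\xi|^{2\nu}\,B(|\xi|^2), & \nu\notin\N,\\
|\xi|^{2\nu}\log|\xi|\,B(|\xi|^2), & \nu\in\N,
\end{cases}
\]
with $A,B$ real-analytic near $0$, $B(0)\neq0$, and $2\nu=2|\alpha|-d$. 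Since $\sigma-\phica$ is analytic near the origin and $\sigma$ is bounded below, $1/\sigma$ inherits, at each lattice point, a leading singularity of exactly the same type added to a smooth function. A standard integration-by-parts estimate for the Fourier coefficients of such a function then yields the asserted decay: $|a_j|=O(|j|^{-\floor{2|\alpha|-d}})$ when $2|\alpha|-d$ is not an integer (in particular whenever $\alpha$ is neither an integer nor a half-integer), and $|a_j|=O(|j|^{-(2|\alpha|-d-1)})=O(|j|^{-2|\alpha|+d+1})$ from the crude estimate when $2|\alpha|-d$ is an integer; this already gives the stated bound for $\alpha\in\Z$, while for half-integer $\alpha$ the crude estimate must be sharpened as indicated below. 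Finally, $\alpha<-d-\tfrac12$ forces $2|\alpha|-d>d+1$, so in either case the decay exponent exceeds $d$; hence $(a_j)\in\ell_1$, and since $\phi_{\alpha,c}$ is bounded the series $\sum_{j}a_j\phi_{\alpha,c}(\cdot-j)$ converges uniformly on $\R^d$.

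I expect the main obstacle to be the regularity bookkeeping in the second step. One must (i) extract the precise local structure of $\phica$ at the origin from the series definition of $K_\nu$, carefully separating the non-integer-order case (a combination of two even power series in $|\xi|$) from the integer-order case (where the logarithmic term appears); this dichotomy is what produces the two cases in the statement, and it needs extra care at the borderline half-integer values of $\alpha$, where $2|\alpha|-d$ is an integer and a logarithm (or an odd integer power of $|\xi|$) is present even though $\alpha\notin\Z$, so that the stated rate $O(|j|^{-\floor{2|\alpha|-d}})$ has to be recovered from the sharper decay $O(|j|^{-(s+d)})$ of the Fourier coefficients of a radial $|\xi|^s$-type singularity ($s=2|\alpha|-d$) rather than from the crude bound coming from $C^{\floor{s}}$-membership. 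And (ii) one must check that forming $1/\sigma$ and periodizing neither creates new singular points nor raises the order of the singularity, for which the positive lower bound on $\sigma$ and the analyticity of $\sigma-\phica$ near the lattice points are the essential inputs.
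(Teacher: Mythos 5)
Your proposal is correct, and its skeleton is the same as the paper's: both identify $a_j$ with the Fourier coefficients of the periodic symbol $P_\alpha=\bigl(\sum_{j\in\Z^d}\phica(\cdot+2\pi j)\bigr)^{-1}$, recover the series representation by multiplying back by $\phica$ and inverting, and get $\ell_1$-summability and uniform convergence from the decay exponent exceeding $d$ together with the boundedness and integrability of $\phi_{\alpha,c}$. Where you genuinely differ is in how the coefficient decay is produced. The paper quotes its multiplier estimates \eqref{est5}, \eqref{est6} and Theorem \ref{-DmL1} to place derivatives of $P_\alpha$ of every order $k<2|\alpha|-d$ in $L_1(\T^d)$ and then integrates by parts, giving $|a_j|=O(|j|^{-k})$ for the largest admissible integer $k$. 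You instead extract the exact local structure of $\phica$ at the origin from the series for $K_\nu$ (smooth part plus $|\xi|^{2\nu}$, or $|\xi|^{2\nu}\log|\xi|$, times an analytic factor, with $2\nu=2|\alpha|-d$), transfer it to $1/\sigma$ via the positive lower bound on $\sigma$ and the smoothness of $\sigma-\phica$ near $0$, and read off the decay from the Fourier transform of the singular part. This buys a sharper bound, $O(|j|^{-2|\alpha|})$, and is in fact needed at the borderline half-integer values of $\alpha$ that you flag: there $2|\alpha|-d$ is an integer, the integration-by-parts route only reaches $k=2|\alpha|-d-1$, so it yields $O(|j|^{-(2|\alpha|-d-1)})$ rather than the stated $O(|j|^{-\floor{2|\alpha|-d}})$, and your singularity analysis is what closes that gap. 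The only items still owed are the ones you already identify as bookkeeping: after composing with $t\mapsto 1/t$, the remainder beyond the leading $|\xi|^{2\nu}$-singularity is not smooth but carries only singularities of order at least $4\nu$, which is sufficient but should be stated; and in the integer-$\nu$ case one should note that the logarithm costs no extra $\log|j|$ factor, since the Fourier transform of a cutoff of $|\xi|^{2m}\log|\xi|$ still decays like $|x|^{-2m-d}$.
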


\begin{theorem}\label{InterSpace}
Suppose that $\alpha<-d-1/2$.  Then for all $1\leq p\leq \infty$ and $c>0$,
\[
V_p(L_{\alpha,c})= V_p(\phi_{\alpha,c}).
\]
Consequently, for all $h>0$,
\[
V_p(L_{\alpha, 1/h}(\cdot/h),h\Z^d)= V_p(\phi_{\alpha,1/h}(\cdot/h),h\Z^d).
\]
%Moreover, for $1\leq p\leq 2$, $V_p(L_{\alpha, c})=V_p(\phi_{\alpha,c}).$
\end{theorem}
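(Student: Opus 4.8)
The plan is to prove the two inclusions $V_p(L_{\alpha,c})\subseteq V_p(\phi_{\alpha,c})$ and $V_p(\phi_{\alpha,c})\subseteq V_p(L_{\alpha,c})$ separately and then deduce the dilated identity by a change of variables. First I would record the facts that make both spaces well-defined: since $\alpha<-d-1/2$ forces $2\alpha<-d$, the function $\phi_{\alpha,c}(x)=(|x|^2+c^2)^\alpha$ decays like $|x|^{2\alpha}$ and lies in the amalgam space $W(C_0,\ell_1)$ of continuous functions $\psi$ vanishing at infinity with $\sum_{j\in\Z^d}\sup_{x\in j+[0,1]^d}|\psi(x)|<\infty$; combined with Theorem \ref{L_coeff_bnd}, which writes $L_{\alpha,c}=\sum_j a_j\phi_{\alpha,c}(\cdot-j)$ with $a\in\ell_1$, this places $L_{\alpha,c}$ in $W(C_0,\ell_1)$ as well. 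Consequently both synthesis operators $S_\phi\colon(c_j)\mapsto\sum_j c_j\phi_{\alpha,c}(\cdot-j)$ and $S_L\colon(c_j)\mapsto\sum_j c_j L_{\alpha,c}(\cdot-j)$ are bounded from $\ell_p$ into $L_p$ for every $1\le p\le\infty$ (as in \cite{HammLedford}, or directly via \cite{AG,JM}) and take values in continuous functions, so $V_p(\phi_{\alpha,c})$ and $V_p(L_{\alpha,c})$ coincide with the ranges of $S_\phi$ and $S_L$.

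For $V_p(L_{\alpha,c})\subseteq V_p(\phi_{\alpha,c})$, take $f=S_L c$ with $c\in\ell_p$. Using $L_{\alpha,c}=\sum_j a_j\phi_{\alpha,c}(\cdot-j)$ from Theorem \ref{L_coeff_bnd}, I would argue pointwise: for every $x\in\R^d$ the double series $\sum_{k,j}|c_k||a_j||\phi_{\alpha,c}(x-k-j)|$ is dominated by $\|c\|_{\ell_\infty}\|a\|_{\ell_1}\sum_{m\in\Z^d}|\phi_{\alpha,c}(x-m)|<\infty$, so Fubini for series applies and gives $f(x)=\sum_n(c*a)_n\,\phi_{\alpha,c}(x-n)=S_\phi(c*a)(x)$. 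By Young's inequality $c*a\in\ell_p$, hence $f\in V_p(\phi_{\alpha,c})$. Working pointwise rather than via finitely supported sequences is deliberate, so that the argument is uniform in $p$ and in particular covers $p=\infty$, where finitely supported sequences are not dense; the two sides are genuine continuous functions, so pointwise agreement suffices.

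The reverse inclusion is the crux. It is enough to prove $\phi_{\alpha,c}\in V_1(L_{\alpha,c})$, for then the same convolution argument yields $V_p(\phi_{\alpha,c})\subseteq V_p(L_{\alpha,c})$ for all $p$. Let $\sigma(\xi):=\zsumd{j}\phica(\xi+2\pi j)$ be the periodization in the denominator of $\Lachat$, so that $\phica=\sigma\cdot\Lachat$ on $\R^d\setminus\{0\}$ by definition and everywhere by continuity. Since $\alpha<-d/2$ we have $\phi_{\alpha,c}\in L_1$, so $\phica$ is a genuine continuous function, which is positive and exponentially decaying by the stated Bessel asymptotics; hence $\sigma$ converges uniformly to a continuous, $2\pi\Z^d$-periodic function bounded above and below by positive constants. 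Periodizing and invoking Fourier inversion for $\phica\in L_1$ identifies the Fourier coefficients of $\sigma$ as the lattice samples $\widehat\sigma(m)=\phi_{\alpha,c}(m)=(|m|^2+c^2)^\alpha$, which form an $\ell_1$ sequence precisely because $2\alpha<-d$. Therefore $\sigma(\xi)=\sum_{m\in\Z^d}\phi_{\alpha,c}(m)\,e^{-i\bracket{m,\xi}}$, so $\phica(\xi)=\sum_m\phi_{\alpha,c}(m)\,e^{-i\bracket{m,\xi}}\Lachat(\xi)$, which is the Fourier transform of the $L_1$-convergent series $\sum_m\phi_{\alpha,c}(m)L_{\alpha,c}(\cdot-m)$. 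By Lemma \ref{LEMDistribution}, $\phi_{\alpha,c}=\sum_m\phi_{\alpha,c}(m)L_{\alpha,c}(\cdot-m)$ with coefficients in $\ell_1$, as needed; combining the two inclusions gives $V_p(L_{\alpha,c})=V_p(\phi_{\alpha,c})$.

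For the consequence, the dilation $D_h\psi:=\psi(\cdot/h)$ is a bounded linear bijection of $L_p$ onto itself, so $V_p(\psi(\cdot/h),h\Z^d)=D_h\big(V_p(\psi)\big)$ for any admissible generator $\psi$; applying the main identity with shape parameter $c=1/h>0$ and pushing forward by $D_h$ then gives $V_p(L_{\alpha,1/h}(\cdot/h),h\Z^d)=V_p(\phi_{\alpha,1/h}(\cdot/h),h\Z^d)$. The main obstacle is the reverse inclusion: concretely, verifying that the periodized symbol $\sigma$ has summable Fourier coefficients, which rests on identifying those coefficients with the samples $\phi_{\alpha,c}(m)$ via Poisson summation and exploiting the decay $2\alpha<-d$. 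By contrast the forward inclusion is essentially immediate from Theorem \ref{L_coeff_bnd}, where the genuine analytic work — the decay rates of the coefficients $a_j$ — has already been done.
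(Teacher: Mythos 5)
Your proposal is correct, and while the forward inclusion $V_p(L_{\alpha,c})\subseteq V_p(\phi_{\alpha,c})$ is essentially the paper's argument (Theorem \ref{L_coeff_bnd} plus Fubini and discrete Young), your reverse inclusion takes a genuinely different route. The paper proves a lemma asserting that $P_\alpha:=\bigl(\sum_{j}\widehat{\phi_{\alpha,c}}(\cdot+2\pi j)\bigr)^{-1}$ and its reciprocal both lie in the Wiener algebra $A(\T^d)$: it gets $P_\alpha\in A(\T^d)$ from Theorem \ref{L_coeff_bnd} and then invokes Wiener's $1/f$ theorem, which forces it to verify separately that $P_\alpha$ is bounded away from zero, i.e.\ that the periodization $\sigma=1/P_\alpha$ is uniformly bounded above (done there via the exponential decay of the Bessel-type Fourier transform). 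You instead put $\sigma$ in $A(\T^d)$ directly, by applying Poisson summation to identify its Fourier coefficients as the lattice samples $\phi_{\alpha,c}(m)=(|m|^2+c^2)^\alpha$, which are summable precisely because $2\alpha<-d$; this is legitimate here since both $\phi_{\alpha,c}$ and $\widehat{\phi_{\alpha,c}}$ are integrable and the periodization converges uniformly. Your route is more elementary (no Wiener theorem, no lower bound on $P_\alpha$ needed) and more informative, since it exhibits the expansion $\phi_{\alpha,c}=\sum_m\phi_{\alpha,c}(m)L_{\alpha,c}(\cdot-m)$ with explicit coefficients, exactly as the cardinal-interpolation property predicts. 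What the paper's Wiener-algebra argument buys in exchange is robustness: it only needs summability of the coefficients of $P_\alpha$ together with a uniform upper bound on the periodized symbol, so it transfers to generators whose expansion coefficients cannot be computed in closed form. The one point worth stating explicitly in your write-up is the uniform bound $\sup_x\sum_n|L_{\alpha,c}(x-n)|<\infty$ needed to run Fubini in the reverse direction; you cover it via the amalgam-space observation, and the paper covers the same point via Corollary \ref{CORmultiplierinversebounds} and \eqref{EQmultiplierinverseFTdef}.
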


Note that if $\alpha>0$, the space $V_p(\phi_{\alpha,c})$ is not well-defined since $\phi_{\alpha,c}$ is unbounded. Nonetheless, something may be said based on the analysis of Buhmann \cite{Buhmann}, which we summarize in the following remark.

\begin{rem}
For $\alpha,c>0$, $\phi_{\alpha,c}$ satisfies the admissibility conditions of Buhmann \cite{Buhmann} of order $\ceiling{2\alpha}$.  It follows then by Theorems 6, 10, and 11 therein that $L_{\alpha,c}(x)=\sum_{j\in\Z^d}a_j\phi_{\alpha,c}(x-j)$ where the series is absolutely convergent, the coefficients satisfy $|a_j|=O(|j|^{-2d-2\alpha})$, $|k|\to\infty$, and the cardinal function satisfies $|L_{\alpha,c}(x)|=O(|x|^{-2d-2\alpha}),$ $|x|\to\infty$.
\end{rem}

\begin{comment}
Nonetheless, in certain cases a similar result holds concerning the space 
\[
S(\phi):=\overline{\text{span}}\{ \phi(\cdot-j):j\in\mathbb{Z}^d\},
\]
where the closure is taken in the topology of uniform convergence on compact sets.  If we take the spatial dimension $d$ to be odd and $\alpha=(2k-1)/2$ for some $k\in\N$,
then the estimates from \cite[Theorem 1]{BuhmannMichelli} imply that the cardinal function satisfies
\[
L_{\alpha,c}(x)=O\left((1+|x|)^{-3d-4k+2}\right),
\]
which allows us to follow the outline of the proofs of the previous theorems to conclude that
\[
V_p(L_{\alpha,c})\subseteq S(\phi_{\alpha,c}).
\]
However, the Fourier analytic arguments in the sequel seem insufficient to prove this containment for more general $\alpha>0$.
\end{comment}

The proofs of the preceding theorems require some more detailed estimates of the decay of the cardinal functions, and so are postponed until Section \ref{SECProofs}.

\begin{rem}
It should be noted that the decay conditions on the cardinal functions (cf. Corollary \ref{CORmultiplierinversebounds}) imply that for all $\alpha\in(-\infty,-d-1/2)\cup[1/2,\infty)\setminus\N$, $\{L_{\alpha,\frac1h}(\cdot/h-j):j\in\Z^d\}$ is a Riesz basis for $V_2(L_{\alpha,\frac1h}(\cdot/h),h\Z^d)$, so it follows on account of \cite[Theorem 2.4]{AG} that $\{L_{\alpha,\frac1h}(\cdot/h-j):j\in\Z^d\}$ is an unconditional basis for $V_p(L_{\alpha,\frac1h}(\cdot/h),h\Z^d)$ for every $1\leq p\leq\infty$, and moreover that the latter is a closed subspace of $L_p$.  Consequently, the same is true if $L$ is replaced by $\phi$ when $\alpha<-d-1/2$ on account of Theorem \ref{InterSpace}.
\end{rem}

\subsection{Interpolation and Approximation Rates}

To begin the discussion of interpolation of $g\in W_p^k(\R^d)$ by $I_\alpha^hg\in V_p(L_{\alpha,\frac1h}(\cdot/h),h\Z^d)$, first note that existence of the interpolant is given by the boundedness of $I_\alpha^h$ as an operator from $\ell_p\to L_p$ (discussed above).  Secondly, uniqueness follows from the definition of a cardinal function (i.e. $\sum_{j\in\Z^d}c_jL_{\alpha,\frac1h}(\frac{\cdot}{h}-j)=0$ if and only if $c_j=0$ for all $j$).  From here on, for a given $g$ in the Sobolev space, $I_\alpha^h g$ is to be taken to be the unique interpolant in $V_p(L_{\alpha,\frac1h}(\cdot/h),h\Z^d)$, where it is understood that for $\alpha<-d-1/2$ and $1\leq p\leq\infty$, this must be the same as the unique interpolant from $V_p(\phi_{\alpha,\frac1h}(\cdot/h),h\Z^d)$ by Theorem \ref{InterSpace}. In the interpolation results that follow, the constants $C$ will be independent of $h$ provided that we consider a range $0<h<h_0$ for some fixed, but arbitrary $h_0$.  However, as the behavior we are primarily interested in is that for small $h$, we state the results for $0<h\leq1$ without loss of generality, and simply alert the reader here that this is not strictly necessary.

\begin{theorem}\label{THMmaintheorem}
 Suppose $\alpha\in(-\infty,-d-1/2)\cup[1/2,\infty)\setminus\N$ is fixed. 
 Let $1<p<\infty$, $k>d/p$, and $0<h\leq1$.  There exists a constant $C$, independent of $h$, so that for every $g\in
W_p^k(\R^d)$,
 $$\|I_\alpha^hg-g\|_{L_p}\leq Ch^k\|g\|_{W_p^k}.$$
 
 If $p=1$ and $k> d$, or $p=\infty$ and $k\in\N$, there is a constant $C$, independent of $h$, so
that for every $g\in W_p^k(\R^d)$,
 $$\|I_\alpha^hg-g\|_{L_p}\leq C(1+|\ln h|)h^k\|g\|_{W_p^k}.$$
\end{theorem}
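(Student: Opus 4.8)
The plan is to follow the intermediate-bandlimited-function strategy advertised in the introduction: rather than comparing $I_\alpha^h g$ to $g$ directly, interpolate $g$ by a suitably chosen bandlimited function $f$ of exponential type comparable to $\pi/h$, and exploit the fact that the interpolation operator reproduces bandlimited functions whose band fits inside the Nyquist cube. Concretely, I would fix a smooth cutoff $\chi$ on the Fourier side, equal to $1$ on $[-\pi,\pi]^d$ and supported in $[-\pi-\eps,\pi+\eps]^d$, and set $f := (\chi(h\cdot)\widehat g)^\vee$, so that $f\in E_{(\pi+\eps)/h}$. Standard Sobolev/Littlewood--Paley estimates (of the Jackson type) give $\|f-g\|_{L_p}\leq Ch^k|g|_{W_p^k}$ and, crucially, the same bound for the interpolation-data discrepancy $\|(f(hj)-g(hj))_j\|_{\ell_p}\cdot h^{d/p}\leq Ch^k\|g\|_{W_p^k}$, using that $k>d/p$ so point evaluation is controlled. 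Then

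$$I_\alpha^h g - g = (I_\alpha^h g - I_\alpha^h f) + (I_\alpha^h f - f) + (f - g),$$

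and the first term is handled by boundedness of $I_\alpha^h:\ell_p\to L_p$ (the cited extension of \cite[Theorem 8]{HammLedford}) applied to the data differences, while the third is the Jackson estimate above. Everything therefore reduces to estimating the middle term $\|I_\alpha^h f - f\|_{L_p}$ for $f$ bandlimited to a band slightly larger than the Nyquist cube.

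The heart of the matter is this middle term, and here I would pass to the Fourier side after rescaling by $h$ (write $F(\cdot):=f(h\cdot)$, which is bandlimited to $[-\pi-\eps,\pi+\eps]^d$). Because $L_{\alpha,1/h}$ is a cardinal function, $\widehat{I_\alpha^h f}(\xi) = \Lhhat(h\xi)\sum_{j\in\Z^d}\widehat f(\xi + 2\pi j/h)$, and since $F$ overspills the Nyquist cube only into the shell $\pi\leq|\xi_\ell|\leq\pi+\eps$, the aliasing sum has only nearest-neighbor contributions. Subtracting $\widehat F$ and using the explicit formula $\Lhhat = \phi_{\alpha,1/h}^{\wedge}(\cdot)/\sum_j \phi_{\alpha,1/h}^{\wedge}(\cdot+2\pi j)$, the difference $\widehat{I_\alpha^h F} - \widehat F$ is, on the Nyquist cube, a combination of the multiplier $1 - \Lhhat$ acting on $\widehat F$ plus aliasing terms weighted by ratios $\phi_{\alpha,1/h}^{\wedge}(\xi+2\pi j)/\phi_{\alpha,1/h}^{\wedge}(\xi)$ evaluated near the boundary. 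The key analytic inputs — which I would quote from Section \ref{SECMultiplier} (the pointwise and operator-norm estimates on the multiplier operator associated with $\Lhhat$, presumably Corollary \ref{CORmultiplierinversebounds} and its companions) — are that the relevant Fourier multiplier is bounded on $L_p$ for $1<p<\infty$ uniformly in $h$, and that on the overspill shell the symbol is $O(|\xi/\pi - 1|^{?})$ small, or more precisely that the decay of $K_\nu$ forces the aliasing ratios to be exponentially small in $1/h$ while the "main" error $(1-\Lhhat)\widehat F$ is controlled by the $h^k$ factor coming from $\widehat F$ being concentrated where $|\xi|\lesssim 1/h$ together with smoothness of $g$. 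Assembling: $\|I_\alpha^h F - F\|_{L_p(\text{rescaled})} \leq Ch^k\|g\|_{W_p^k}$, and undoing the scaling (which contributes a harmless $h^{d/p}$ that matches the Jackson side) yields the claim for $1<p<\infty$.

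For the endpoint cases $p=1$ and $p=\infty$, the Fourier-multiplier boundedness on $L_p$ fails in general, and this is where the logarithmic loss enters: one instead estimates $\|I_\alpha^h F - F\|_{L_\infty}$ (resp. $L_1$) by $\|\widehat{I_\alpha^h F}-\widehat F\|_{L_1}$ via Hausdorff--Young/trivial bounds, or by a direct kernel estimate $\|I_\alpha^h F - F\|_{L_p} \leq \|F-g\|_{L_p}$-style splitting combined with the $\ell_1 \to \ell_\infty$ and $\ell_1\to\ell_1$ boundedness of the discrete convolution by the cardinal coefficients, and the factor $1+|\ln h|$ arises exactly from integrating the (bounded but not integrable) multiplier over the band of size $\sim 1/h$, i.e. from $\int_{|\xi|\leq 1/h}|\xi|^{-1}d\xi$-type logarithmic divergences, or equivalently from the $\ell_1$-norm of the cardinal coefficients of $L_{\alpha,1/h}$ over the $h\Z^d$ lattice growing like $\ln(1/h)$. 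I would carry out the $p=1$ case by duality against $L_\infty$ and the $p=\infty$ case directly, using in both the decay estimates on $L_{\alpha,c}$ from Theorems \ref{L_coeff_bnd} and the multiplier bounds of Section \ref{SECMultiplier}; the requirement $k>d$ when $p=1$ (versus $k>d/p$ in general) is precisely what makes the data-sampling and Jackson estimates survive at that endpoint. The main obstacle throughout is the middle term: getting the uniform-in-$h$ operator bound for the multiplier $1-\Lhhat$ with the correct $h^k$ gain, which hinges delicately on the behavior of $\sum_j \phi_{\alpha,1/h}^{\wedge}(\cdot+2\pi j)$ near the cube boundary and on the restricted range of $\alpha$ — exactly the content deferred to Sections \ref{SECMultiplier} and \ref{SECProofs}, which I would invoke as black boxes here.
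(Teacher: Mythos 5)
Your overall scaffolding (pass to a bandlimited function of exponential type $(\pi+\eps)/h$ and reduce the problem to that case) matches the paper's strategy, but two departures matter, and the second is a genuine gap. First, you take $f=(\chi(h\cdot)\widehat g)^\vee$, a Fourier truncation which does \emph{not} interpolate $g$ on $h\Z^d$; the paper instead invokes Lemma \ref{LEMhmnwapproxbandlimited} to produce a bandlimited $f$ satisfying $f(hj)=g(hj)$, which makes $I_\alpha^h g=I_\alpha^h f$ and eliminates your data-discrepancy term entirely. Your extra term can be salvaged with a standard sampling inequality, so this only adds work.

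Second, and critically: your treatment of the middle term $\|I_\alpha^hf-f\|_{L_p}$ rests on the claim that $(1-m_{\alpha,h})\widehat f$ together with the aliasing ratios gains a factor $h^k$, which you propose to quote from Sections \ref{SECMultiplier} and \ref{SECProofs} as black boxes. No such estimate appears there, and it is exactly the hard point: $1-m_{\alpha,h}(\xi)=s(\xi)/(1+s(\xi))$ is \emph{not} small near the faces of the Nyquist cube $\|\xi\|_\infty\sim\pi/h$ (it is $O(1)$ there), while $\widehat f$ is supported out to $(\pi+\eps)/h$, so extracting the $h^k$ gain requires a genuine Strang--Fix/approximation-order argument (smallness of $\widehat f$ near the band edge traded against smoothness of $g$), which for $p\neq 2$ needs Littlewood--Paley machinery you have not supplied. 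The paper sidesteps this entirely: since $I_\alpha^hf-f\in W_p^k$ vanishes on $h\Z^d$, the Madych--Potter estimate \cite[Corollary 1]{MadychPotter} gives $\|I_\alpha^hf-f\|_{L_p}\leq Ch^k|I_\alpha^hf-f|_{W_p^k}$, and the only remaining input is the uniform seminorm bound $|I_\alpha^hf|_{W_p^k}\leq C\|f\|_{W_p^k}$ of Theorem \ref{THMstabilinterpolation} --- this is the content that Sections \ref{SECMultiplier} and \ref{SECProofs} actually deliver, and it is absent from your outline. (Relatedly, the $(1+|\ln h|)$ loss at $p=1,\infty$ enters through $\|m_{\alpha,h}\|_{\Mp}\leq\|m_{\alpha,h}^\vee\|_{L_1}\leq C(1+|\ln h|)$, a spatial-side $L_1$ computation from the pointwise bounds on $m_{\alpha,h}^\vee$, not from a Fourier-side $\int|\xi|^{-1}\,d\xi$ divergence.) Without either the Madych--Potter step or a worked-out proof of your multiplier-smallness claim, the argument does not close.
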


The proof of the above theorem follows from an indirect argument which considers interpolation of bandlimited functions $f\in E_\frac{\pi+\eps}{h}\cap W_p^k(\R^d)$, which themselves interpolate the Sobolev functions at $h\Z^d$.  This argument follows the insightful techniques of \cite{hmnw}.  The following lemma shows that this interpolation of Sobolev functions by bandlimited ones is stable by providing Jackson and Bernstein type inequalities.

\begin{lemma}[\cite{hmnw}, Lemma 2.2]\label{LEMhmnwapproxbandlimited}
 Let $0<\varepsilon<\pi$, $1\leq p\leq\infty$, and $k>d/p$.  If $g\in W^k_p$, then given $0<h\leq1$, there is a function $f\in
E_\frac{\pi+\eps}{h}\cap W_p^k$ satisfying

 \begin{equation}\label{EQbandlimitedinterpcondition}
 f(hj) = g(hj),\quad j\in\Z^d, 
 \end{equation}

 \begin{equation}\label{EQBernstein}
  \|f-g\|_{L_p}\leq Ch^k|g|_{W_p^k},
 \end{equation}

 and
 \begin{equation}\label{EQJackson}
  |f|_{W_p^k}\leq C|g|_{W_p^k},
 \end{equation}

where $C$ is a constant independent of $h$ and $g$.
\end{lemma}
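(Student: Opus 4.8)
The plan is to recall the argument of \cite{hmnw}: first reduce to the unit scale by a dilation, then build $f$ as a sum $f=f_1+f_2$, where $f_1$ is a bandlimited quasi-interpolant that already realizes the Jackson and Bernstein estimates \eqref{EQBernstein}--\eqref{EQJackson}, and $f_2$ is a bandlimited correction that restores the interpolation condition \eqref{EQbandlimitedinterpcondition}. For the reduction, put $\widetilde g(x):=g(hx)$; the chain rule gives $D^\gamma\widetilde g(x)=h^{[\gamma]}(D^\gamma g)(hx)$, hence $|\widetilde g|_{W_p^k}=h^{k-d/p}|g|_{W_p^k}$ and $\|\widetilde g\|_{L_p}=h^{-d/p}\|g\|_{L_p}$. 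If one produces $\widetilde f\in E_{\pi+\eps}\cap W_p^k$ with $\widetilde f(j)=\widetilde g(j)$ for all $j\in\Z^d$, $\|\widetilde f-\widetilde g\|_{L_p}\le C|\widetilde g|_{W_p^k}$ and $|\widetilde f|_{W_p^k}\le C|\widetilde g|_{W_p^k}$, then $f(x):=\widetilde f(x/h)$ belongs to $E_{(\pi+\eps)/h}$, satisfies $f(hj)=g(hj)$, and the same scaling identities turn the two estimates into \eqref{EQBernstein} and \eqref{EQJackson} with an $h$-independent constant. Thus it suffices to treat $h=1$.

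For $f_1$, fix once and for all a $\chi\in\schwartz$ whose Fourier transform is a smooth bump supported in $B(0,\pi+\eps/2)$ and identically $1$ near the origin. Then $\int\chi=1$ and every moment $\int x^\gamma\chi(x)\,dx$ with $1\le[\gamma]\le k-1$ vanishes (being a constant multiple of $D^\gamma\widehat\chi(0)$), so convolution with $\chi$ reproduces all polynomials of degree $<k$. Set $f_1:=g*\chi$, so that $f_1\in E_{\pi+\eps/2}$. Since $\chi\in L_1$, Young's inequality gives $\|f_1\|_{L_p}\le\|\chi\|_{L_1}\|g\|_{L_p}$ and, using $D^\gamma(g*\chi)=(D^\gamma g)*\chi$, $|f_1|_{W_p^k}\le\|\chi\|_{L_1}|g|_{W_p^k}$ for \emph{every} $1\le p\le\infty$. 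For the error, write $g(x)-f_1(x)=\int\bigl[g(x)-g(x-y)\bigr]\chi(y)\,dy$, expand $g(x-y)$ by Taylor's formula with integral remainder of order $k$, and use the vanishing moments to annihilate the polynomial terms; Minkowski's integral inequality then yields $\|g-f_1\|_{L_p}\le\Bigl(\int|\chi(y)|\,|y|^k\,dy\Bigr)|g|_{W_p^k}\lesssim|g|_{W_p^k}$, again for all $p$. It is essential here that the smoother has an $L_1$ kernel rather than merely being a bounded Fourier multiplier, since the latter breaks down at $p=1,\infty$.

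For $f_2$, put $e_j:=g(j)-f_1(j)$. The sampling inequality $\|(F(j))_{j\in\Z^d}\|_{\ell_p}\lesssim\|F\|_{W_p^k}$ (local Sobolev embedding on unit cubes, legitimate since $k>d/p$), applied to $F=g-f_1$ together with the bounds above, gives $\|e\|_{\ell_p}\lesssim\|g-f_1\|_{L_p}+|g-f_1|_{W_p^k}\lesssim|g|_{W_p^k}$. Next choose a bandlimited cardinal function $L$ whose Fourier support is small enough that $f_1+f_2\in E_{\pi+\eps}$ and which, together with each $D^\gamma L$, decays faster than any polynomial, and set $f_2:=\sum_{j\in\Z^d}e_j\,L(\cdot-j)$. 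The rapid decay makes the synthesis operator bounded from $\ell_p$ into $L_p$ and into $W_p^k$ with norm depending only on $L$, so $\|f_2\|_{L_p}+|f_2|_{W_p^k}\lesssim\|e\|_{\ell_p}\lesssim|g|_{W_p^k}$. Then $f:=f_1+f_2$ obeys $f(j)=f_1(j)+e_j=g(j)$, lies in $E_{\pi+\eps}\cap W_p^k$, and satisfies $\|f-g\|_{L_p}\le\|g-f_1\|_{L_p}+\|f_2\|_{L_p}\lesssim|g|_{W_p^k}$ and $|f|_{W_p^k}\le|f_1|_{W_p^k}+|f_2|_{W_p^k}\lesssim|g|_{W_p^k}$; undoing the dilation completes the argument.

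The hard part is the bandlimited interpolation in the last step: producing a cardinal function (equivalently, a bounded right inverse of lattice sampling) whose Fourier support fits inside the prescribed set, with all synthesis norms uniform, is delicate when $d\ge2$ because the dual cell of $2\pi\Z^d$ is not covered by a small ball. One deals with this by working instead with a coordinatewise-bandlimited (tensor-product) cardinal function, or by iterating the quasi-interpolation of the second paragraph on the residual samples and summing the resulting geometric series. The other subtlety to keep in view throughout is that every bound must be controlled by the \emph{seminorm} $|g|_{W_p^k}$ and not by the full Sobolev norm, which is precisely why the polynomial-reproducing (vanishing-moment) structure of $\chi$ is indispensable and why a cruder frequency-truncation argument does not suffice.
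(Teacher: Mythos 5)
The paper offers no proof of this lemma --- it is imported wholesale from \cite{hmnw} --- so there is nothing internal to compare against and your reconstruction has to stand on its own. Its first two stages do: the dilation reduction is computed correctly, and the quasi-interpolant $f_1=g\ast\chi$, with $\widehat{\chi}$ a smooth bump equal to $1$ near the origin, delivers the unit-scale versions of \eqref{EQBernstein} and \eqref{EQJackson} with only the seminorm on the right, uniformly in $1\le p\le\infty$, by exactly the vanishing-moment/Taylor argument you describe. The sampling inequality for the residual data is also fine.

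The gap is the step you yourself label ``the hard part,'' and neither of your proposed repairs closes it, because membership in $E_{(\pi+\eps)/h}$ as defined in this paper forces $\supp\widehat{f}\subset B(0,(\pi+\eps)/h)$, a Euclidean ball. A tensor-product cardinal function has Fourier support in a cube of side roughly $2\pi$, hence only in the ball of radius $(\pi+\eps')\sqrt{d}$, so that route produces $f\in E_{(\pi+\eps)\sqrt{d}/h}$ rather than $E_{(\pi+\eps)/h}$. Iterating the quasi-interpolation on the residual samples fares no better: the residual map on samples is convolution by the sequence $(\delta_{0,k}-\chi(k))_{k\in\Z^d}$, whose symbol $1-\sum_{j\in\Z^d}\widehat{\chi}(\xi+2\pi j)$ equals $1$ on a neighborhood of the corner $(\pi,\dots,\pi)$ whenever $\eps<(\sqrt{d}-1)\pi$ (no $2\pi\Z^d$-translate of $B(0,\pi+\eps)$ reaches that corner, since its distance to the lattice is $\pi\sqrt{d}$), so the iteration is not a contraction on $\ell_p$. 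In fact the obstruction is intrinsic rather than technical: if $f\in L_p$ had $\supp\widehat{f}\subset B(0,\pi+\eps)$ and $f(j)=\delta_{0,j}$, then the periodization $\sum_{j\in\Z^d}\tau_{2\pi j}\widehat{f}$ would be the constant distribution $1$ (its Fourier coefficients are the values $f(k)$, exactly as in Lemma \ref{LEMcj}), yet it vanishes on a neighborhood of $(\pi,\dots,\pi)$ --- a contradiction. Since the data $\delta_{0,j}$ is the trace on $\Z^d$ of a compactly supported smooth bump lying in every $W_p^k$, condition \eqref{EQbandlimitedinterpcondition} cannot be met inside $E_{\pi+\eps}$ for $d\ge2$ and small $\eps$ under the ball convention. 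To complete your argument one must read ``type $(\pi+\eps)/h$'' coordinatewise, i.e.\ Fourier support in the cube $\left[-\frac{\pi+\eps}{h},\frac{\pi+\eps}{h}\right]^d$; with that convention your tensor-product cardinal series, together with the rapid decay and the bounded synthesis operator, does finish the proof.
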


The next key ingredient to the proof of Theorem \ref{THMmaintheorem} is the following, which shows that the interpolation operators $(I_\alpha^h)_{h\in(0,1]}$ are uniformly bounded in the Sobolev seminorm.

\begin{theorem}\label{THMstabilinterpolation}
Let $\alpha\in(-\infty,-d-1/2)\cup[1/2,\infty)\setminus\N$ be fixed, and let $1<p<\infty$, $0<h\leq1$, and $k>d/p$.  There exists a constant $C$ such that for every suitably small $\eps>0$,
 \begin{equation}\label{EQseminormuniformbound}
 |I_\alpha^hf|_{W_p^k}\leq C\|f\|_{W_p^k},\qquad f\in
E_\frac{\pi+\eps}{h}\cap W_p^k(\R^d).\end{equation}
 For $p=1$ and $k> d$, or $p=\infty$ and $k\in\N$, there is a constant $C$ such that
 \begin{equation}\label{EQseminormuniformboundendpoints}|I_\alpha^hf|_{W_p^k}\leq C(1+|\ln h|)\|f\|_{W_p^k},\qquad f\in
E_\frac{\pi+\eps}{h}\cap
W_p^k(\R^d).\end{equation}
\end{theorem}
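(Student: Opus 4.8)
The plan is to pass to the Fourier side and express $\widehat{I_\alpha^h f}$ as a Fourier multiplier applied to $\widehat{f}$. Since $f\in E_{\frac{\pi+\eps}{h}}\cap W_p^k$, its samples $(f(hj))_{j\in\Z^d}$ lie in $\ell_p$, so $I_\alpha^h f = \sum_j f(hj) L_{\alpha,1/h}(\cdot/h - j)$ is a well-defined element of $L_p$. Using the Poisson summation formula one writes the periodization of $\widehat{f}$ (suitably rescaled by $h$) and obtains, after a change of variables $\xi\mapsto h\xi$, that
\[
\widehat{I_\alpha^h f}(\xi) = \frac{\widehat{\phi_{\alpha,1/h}}(h\xi)}{\sum_{j\in\Z^d}\widehat{\phi_{\alpha,1/h}}(h\xi + 2\pi j)}\cdot\sum_{j\in\Z^d}\widehat{f}\!\left(\xi + \tfrac{2\pi j}{h}\right).
\]
Because $\widehat{f}$ is supported in $B(0,\frac{\pi+\eps}{h})$, for each fixed $\xi$ in (a fundamental domain of) $\frac{2\pi}{h}\Z^d$ only the boundedly-many ``neighboring'' translates $\widehat f(\xi + \frac{2\pi j}{h})$ with $|j|$ small can be nonzero; in fact once $\eps<\pi$ the supports of distinct translates overlap only in a thin shell, and on the bulk $B(0,\frac{\pi-\eps}{h})$ only the $j=0$ term survives, where the multiplier equals $1$ (since $\widehat{L_{\alpha,1/h}}$ is built precisely so that its periodization is $1$). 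Thus the analysis localizes to the overlap region, and $\widehat{I_\alpha^h f} - \widehat f$ is supported in the annulus $\frac{\pi-\eps}{h}\le|\xi|\le\frac{\pi+\eps}{h}$.

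Next I would estimate $|I_\alpha^h f|_{W_p^k}$ by estimating $\|D^\gamma I_\alpha^h f\|_{L_p}$ for $[\gamma]=k$, i.e. $\|(\,(i\cdot)^\gamma \widehat{I_\alpha^h f}\,)^\vee\|_{L_p}$. Write $(i\xi)^\gamma \widehat{I_\alpha^h f}(\xi)$ as the multiplier $m_{\alpha,h}(\xi):=\widehat{L_{\alpha,1/h}}(h\xi)\cdot\big(\xi/(\xi+\tfrac{2\pi j}{h})\big)^\gamma$-type weights applied to $(i\cdot)^\gamma\widehat f$ translated appropriately; the point is to factor out $(i\xi)^\gamma\widehat f(\xi)$ (which is $\widehat{D^\gamma f}$, controlled by $|f|_{W_p^k}$, hence by $|g|_{W_p^k}$ via Lemma \ref{LEMhmnwapproxbandlimited}) and bound the remaining symbol. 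One handles the diagonal $j=0$ piece and the finitely many off-diagonal pieces separately. For the off-diagonal terms, on the overlap annulus one has $|\xi|\sim|\xi+\tfrac{2\pi j}{h}|\sim 1/h$, so the ratio of monomials is $O(1)$ uniformly in $h$, and what remains is to bound the $L_p\to L_p$ operator norm of the Fourier multiplier with symbol $m_{\alpha,h}$. This is where the material promised in Section \ref{SECMultiplier} enters: one shows, via a Mikhlin–Hörmander (or Marcinkiewicz) multiplier theorem, that $\|m_{\alpha,h}\|_{\mathcal M_p}\le C$ uniformly for $0<h\le1$ when $1<p<\infty$, using the scaling structure $m_{\alpha,h}(\xi)=m_{\alpha,1}(h\xi)$ (multiplier norms are dilation-invariant) together with uniform bounds on the derivatives $\xi^\beta D^\beta m_{\alpha,1}(\xi)$ for $[\beta]\le d$, which in turn follow from the known decay and smoothness of $\widehat{L_{\alpha,c}}$ and the exponential decay of the Bessel function $K_\nu$ away from the origin (controlling the denominator $\sum_j \widehat{\phi_{\alpha,1/h}}(h\xi+2\pi j)$ from below). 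For $p=1$ and $p=\infty$, Mikhlin fails, and one instead estimates the $L_1$ norm of the kernel $m_{\alpha,h}^\vee$ directly; the logarithmic factor $(1+|\ln h|)$ arises exactly because the relevant kernel (essentially a rescaled, windowed Dirichlet-type kernel) has $L_1$ norm of size $\log(1/h)$, as is classical for Fourier partial-sum operators at the endpoints.

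Finally, combining: on the annulus the off-diagonal contribution to $D^\gamma I_\alpha^h f$ has $L_p$ norm $\le C|g|_{W_p^k}$ (resp. $\le C(1+|\ln h|)|g|_{W_p^k}$), while the diagonal contribution is $D^\gamma(f\cdot\text{(bump equal to }1\text{ on the bulk)})$ which differs from $D^\gamma f$ only by a term again supported on the annulus and handled the same way; since $\|D^\gamma f\|_{L_p}\le|f|_{W_p^k}\le C|g|_{W_p^k}$ by \eqref{EQJackson}, we obtain $|I_\alpha^h f|_{W_p^k}\le C|f|_{W_p^k}\le C\|f\|_{W_p^k}$, which is \eqref{EQseminormuniformbound}, and the endpoint version \eqref{EQseminormuniformboundendpoints} analogously. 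The main obstacle I anticipate is the uniform-in-$h$ multiplier bound: verifying that the symbol $m_{\alpha,1}$ satisfies Mikhlin-type conditions with constants independent of everything requires careful control of the denominator periodization near the lattice points $2\pi\Z^d\setminus\{0\}$ (where individual terms $\widehat{\phi_{\alpha,1/h}}$ blow up like $K_\nu$ at $0$) and of how these singularities interact across the full range of admissible $\alpha$; this is precisely the delicate point the paper flags as extending the known range of $\alpha$, and it is where the restriction $\alpha\in(-\infty,-d-1/2)\cup[1/2,\infty)\setminus\N$ will be used.
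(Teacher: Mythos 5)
Your high-level architecture matches the paper's: pass to the Fourier side via periodization, treat $\mah$ as a multiplier, use Mikhlin for $1<p<\infty$ and an $L_1$ kernel bound for the endpoints. But there are concrete gaps. First, two of your intermediate claims are false. The multiplier does \emph{not} equal $1$ on the bulk: $\mah(\xi)=\phica(\xi)/\sum_j\phica(\xi+\frac{2\pi j}{h})$ (with $c=1/h$) is strictly less than $1$ everywhere, and it is supported on all of $\R^d$, so $\widehat{I_\alpha^hf}-\widehat f$ is \emph{not} supported in a thin annulus; each off-diagonal piece $\widehat f(\cdot-\frac{2\pi j}{h})\mah$ lives on the full ball $B(\frac{2\pi j}{h},\frac{\pi+\eps}{h})$. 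Likewise the scaling identity $\mah(\xi)=m_{\alpha,1}(h\xi)$ fails: the shape parameter is coupled to the lattice ($c=1/h$), so $\mah(\xi)=\widehat{\phi_{\alpha,1}}(\xi)/\sum_j\widehat{\phi_{\alpha,1}}(\xi+\frac{2\pi j}{h})$ is not a dilate of a fixed symbol, and the uniform-in-$h$ Mikhlin bounds cannot be obtained by dilation invariance — they require the $h$-explicit derivative estimates of Section \ref{SECmestimates}.

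Second, and more substantively, your treatment of the off-diagonal terms does not close. Factoring $(i\xi)^\gamma=\bigl(\xi/(\xi-\tfrac{2\pi j}{h})\bigr)^\gamma\cdot(i(\xi-\tfrac{2\pi j}{h}))^\gamma$ introduces a symbol that blows up at the center $\xi=\frac{2\pi j}{h}$ of the support of the $j$-th translate, so "the ratio is $O(1)$" holds only on the overlap shell, not on the whole support. The paper resolves this by splitting each translate's support with two bump functions: near the face of $[-\pi/h,\pi/h]^d$ (where $\mah$ has no uniform derivative bounds) it uses the Taylor/binomial expansion of $\xi^\gamma$ about $\frac{2\pi j}{h}$, producing the Schwartz multiplier $\mu_k$ with $\|\mu_k\|_{\Mp}\leq C$ and pairing the top-order factor with $\widehat{D^\gamma f}$ of a modulated translate; away from that face it uses the exponential smallness $|D^\beta\mah|\leq Ch^{\multi{\beta}}e^{-c/h}$ (Corollaries \ref{Dm_est2}--\ref{Dm_est3}) to absorb the crude bound $|\xi|^k\leq C(|j|/h)^k$. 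Your proposal also never addresses the infinitely many terms with $|j|\geq2$, whose monomial factor grows like $|j|^k h^{-k}$ and whose summability over $j$ rests entirely on that exponential decay (Lemma \ref{LEMtauestimate} and the bound on $\Sigma_3$). Without these two devices the off-diagonal estimate, which is the heart of the theorem, is not established.
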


The constants in Theorems \ref{THMmaintheorem} and \ref{THMstabilinterpolation} will depend on $\alpha, p, k$, and $d$, but not on $h$.  Also note that $k$ is independent of $\alpha$ in all of the theorems stated here. The proof of Theorem \ref{THMmaintheorem} is now immediate.

\begin{proof}[Proof of Theorem \ref{THMmaintheorem}]
 Suppose $g\in W_p^k(\R^d)$ for $1<p<\infty$, and let $f\in E_\frac{\pi+\eps}{h}\cap W_p^k$ be the function provided by Lemma \ref{LEMhmnwapproxbandlimited}.  Then on account of \eqref{EQbandlimitedinterpcondition}, $I_\alpha^hg = I_\alpha^hf$, and so $\|I_\alpha^hg-g\|_{L_p}\leq \|I_\alpha^hf-f\|_{L_p}+\|f-g\|_{L_p}$.  The latter term is bounded by $Ch^k|g|_{W_p^k}$ due to \eqref{EQBernstein}.  To estimate the first term, applying a bound due to Madych and Potter \cite[Corollary 1]{MadychPotter} on the norm of $W_p^k$ functions with closely spaced zeros provides the estimate 
 $$\|I_\alpha^hf-f\|_{L_p}\leq Ch^k|I_\alpha^hf-f|_{W_p^k}\leq Ch^k\left(|I_\alpha^hf|_{W_p^k}+|f|_{W_p^k}\right),$$
 whence applying \eqref{EQJackson} and \eqref{EQseminormuniformbound} and combining the above estimates yields the desired inequality.
 
 The proof for $p=1,\infty$ is identical but for applying \eqref{EQseminormuniformboundendpoints} in the final step rather than \eqref{EQseminormuniformbound}.
\end{proof}

The proof of Theorem \ref{THMstabilinterpolation} is technical and postponed to later sections.

%%%%%%%%%%%%%%%%%%%%%%%%%%%%%%%%%%%%%%%%%%%%%%%%%%%%%%%%%%%%%%%%%%%%%%%%%%%%%%%
%%%%%%%%%%%%%%%%%%%%%%%%%%%%%%%%%%%%%%%%%%%%%%%%%%%%%%%%%%%%%%%%%%%%%%%%%%%%%%%
%%%              Fundamental Estimates                        %%%%%%%%%%%%%%%%%
%%%%%%%%%%%%%%%%%%%%%%%%%%%%%%%%%%%%%%%%%%%%%%%%%%%%%%%%%%%%%%%%%%%%%%%%%%%%%%%
%%%%%%%%%%%%%%%%%%%%%%%%%%%%%%%%%%%%%%%%%%%%%%%%%%%%%%%%%%%%%%%%%%%%%%%%%%%%%%%

\section{The multiquadric multiplier}\label{SECMultiplier}

%%%%%%%%%%%%%%%%%%%%%%%%%%%%%%%%%%%%%%%%%%%%%%%%%%%%%%%%%%%%%%%%%%%%%%%%%%%%%%%
%%%             Multipliers                                   %%%%%%%%%%%%%%%%%
%%%%%%%%%%%%%%%%%%%%%%%%%%%%%%%%%%%%%%%%%%%%%%%%%%%%%%%%%%%%%%%%%%%%%%%%%%%%%%%

\subsection{A Note on Fourier Multipliers}

For the subsequent analysis, it is pertinent to stop for a moment and collect
some properties of {\em Fourier multiplier operators}.  Let $m$ be a
measurable function.  Then we define the linear multiplier operator $T_m$ by
$$T_m f := (m\widehat{f})^\vee,$$
which, in the event that the convolution theorem holds, is
$$T_m f = m^\vee\ast f.$$
Now {\em a priori}, it is not clear how this operator is even defined on $L_p$
for general $p$, so at the moment, consider $T_m f$ defined as above for Schwartz functions $f$.  Supposing that there is a constant such that
$$\|T_m f\|_{L_p}\leq C\|f\|_{L_p},\qquad f\in\schwartz,$$
then by density, $T_m$ extends to a bounded linear operator on $L_p$, and moreover
$$\|T_m f\|_{L_p}\leq C\|f\|_{L_p},\qquad f\in L_p.$$

Since we will be considering the same definition for the multiplier
and estimating its multiplier norm for different values of $p$, we define the
$p$--multiplier norm of $T_m$ in the natural way:
$$\|T_m\|_{\Mp}:=\|m\|_{\Mp}:=\underset{\|f\|_{L_p}=1}{\sup}\|T_m
f\|_{L_p}.$$

Notice that if $m^\vee\in L_1$, then $T_m$ is a bounded linear operator
on $L_p$.  Indeed, by Young's Inequality,
$$\|T_m f\|_{L_p} = \|m^\vee\ast
f\|_{L_p}\leq\|m^\vee\|_{L_1}\|f\|_{L_p}.$$
It follows that 
\begin{equation}\label{EQmultiplieronenormbound}
\|m\|_{\Mp}\leq\|m^\vee\|_{L_1}. 
\end{equation}
However, we can (and will) also make use of the estimation
\begin{equation}\label{EQmultiplierconvolutioninequality}
 \|m^\vee\ast f\|_{L_p}\leq\|m\|_{\Mp}\|f\|_{L_p}.
\end{equation}

\subsection{The Multiquadric Multiplier}

We now define the Fourier multiplier whose operator norm will govern much of the analysis in the sequel. Let
\begin{equation}\label{EQmultiplierdef}
 m_{\alpha,h}(\xi):=\widehat{L_{\alpha,\frac{1}{h}}}(h\xi),\quad \xi\in\R^d.
\end{equation}
Therefore, we have
\begin{equation}\label{EQmultiplierinverseFTdef}
 m_{\alpha,h}^\vee(x) = \dfrac{1}{h^d}L_{\alpha,\frac{1}{h}}\left(\frac{x}{h}\right),\quad x\in\R^d.
\end{equation}

In what follows, we consider interpolation of functions $f\in
E_\frac{\pi+\eps}{h}\cap W_p^k(\R^d)$ on account of Lemma \ref{LEMhmnwapproxbandlimited}.  First, note that \eqref{EQmultiplierinverseFTdef} implies that the interpolant of a function $f$ may be expressed as $I_\alpha^h f(x)=h^d\sum_{j\in\Z^d}f(hj)m_{\alpha,h}^\vee(x-hj)$.  Consider the following formal calculation of the Fourier transform of $I_\alpha^h$:

\begin{align}\label{EQIhFourierCalc}
\widehat{I_\alpha^hf}(\xi) & = 
h^d\left[\zsumd{j}f(hj)m_{\alpha,h}^\vee(\cdot-hj)\right]^\wedge(\xi)\nonumber\\
& = h^d\zsumd{j}f(hj)e^{-ih\bracket{j,\xi}}m_{\alpha,h}(\xi)\nonumber\\
& = \zsumd{j}\widehat{f}\left(\xi-\frac{2\pi j}{h}\right)m_{\alpha,h}(\xi).
\end{align}
In the case $p=2$, the above calculation is completely justified by the Poisson summation formula since $f$ is in the classical Paley--Wiener space. 
However, for general $p\neq2$, the above formula needs to be taken distributionally.  Indeed, denote the exponential function as $e_x:=e^{i\langle x, \cdot\rangle}$, and the translation operator on tempered distributions via $\bracket{\tau_xT,\psi}:=\bracket{T,\psi(\cdot-x)}$.  Recalling that $I_\alpha^hf\in L_p$ and thus induces a well-defined tempered distribution, the formal calculation above is as follows:

\begin{align}\label{EQIFourierDistribution}
\widehat{I_\alpha^hf} & = 
h^d\left[\zsumd{j}f(hj)m_{\alpha,h}^\vee(\cdot-hj)\right]^\wedge\nonumber\\
& = h^d\zsumd{j}f(hj)e_{-hj}m_{\alpha,h}\nonumber\\
& = \zsumd{j}\tau_{\frac{2\pi j}{h}}\widehat{f}m_{\alpha,h}.
\end{align}

Justification of this identity is somewhat more complicated.  The main idea is that the right-hand side of the second equality of \eqref{EQIFourierDistribution} is a periodic tempered distribution times an integrable (but not infinitely differentiable) function $m$.  The action of such an object on a test function is not well-defined; however, $m$ may be convolved with a standard $C^\infty$ mollifier that is an $L_1$ approximate identity, and then the right-hand side defines a tempered distribution after taking a limit as the approximate identity parameter goes to $\infty$.  Following this, the final equality stems from the Poisson summation formula for compactly supported tempered distributions \cite[Corollary 8.5.1]{fjoshi}, while the second equality is justified by the fact that the series in question converges in $L_p$, and hence $\schwartz'$.  A complete proof is given in the appendix.

\begin{comment}First, consider the formal calculation
that by \eqref{EQmultiplierdef}, \eqref{EQsecondinterpdef}, and \eqref{EQchiLfourierrelation}, we have
\begin{align}\label{EQIhFourierCalc}
 %\begin{array}{lll}
  \widehat{I_\alpha^hf}(\xi) & = 
\left[\zsumd{j}f(hj)\chih(\cdot-hj)\right]^\wedge(\xi)\nonumber\\
 \nonumber \\
  & =  \zsumd{j}f(hj)e^{-i\bracket{hj,\xi}}\chihhat(\xi)\nonumber\\
 \nonumber \\
  & =  h^d\zsumd{j}f(hj)e^{-i\bracket{hj,\xi}}\Lhhat(h\xi)\nonumber\\
  \nonumber\\
  & =  \left[\zsumd{j}\widehat{f}\left(\xi-\dfrac{2\pi
j}{h}\right)\right]m_{\alpha,h}(\xi),
% \end{array}
\end{align}
where the last step follows from the Poisson Summation Formula for compactly
supported distributions (see, for example, \cite[Corollary 8.5.1]{fjoshi}).  Distributing the Fourier transform inside the sum in the first line is justified by the fact that the series converges in $L_p$, and thus in $\schwartz'$.  
\end{comment}

In the sequel, we will use \eqref{EQIhFourierCalc} (which is an abuse of notation in the $p\neq2$ case but should cause no confusion), and consider $\mah$ as a Fourier multiplier acting on $L_p(\R^d)$.  The multiplier norm of $\mah$ for different values of $p$ will determine the behavior of the seminorm of $I_\alpha^hf$ since \eqref{EQIFourierDistribution} implies that $I_\alpha^hf = T_{m_{\alpha,h}}(\sum_{j\in\Z^d}\tau_\frac{2\pi j}{h}\widehat{f})^\vee$ in the multiplier notation of the previous subsection.

Use of Fourier multipliers to provide the seminorm estimates comes from the techniques of \cite{hmnw}.  However, unlike the Gaussian and its associated multiplier, the general multiquadrics in several variables are not tensor products of their univariate counterparts.  Consequently, to determine the properties of $m_{\alpha,h}$, it is not sufficient to consider the case $d=1$.  Nonetheless, due to the radial nature of the multiquadrics, we may still find sufficient estimates on the decay of the multiplier to prove Theorem \ref{THMstabilinterpolation}.

\subsection{Estimates for $m_{\alpha,h}$}\label{SECmestimates}

To use \eqref{EQmultiplieronenormbound}, it suffices to obtain bounds on the function $m_{\alpha,h}$.  The calculations closely resemble those used in \cite{HammLedford} to derive estimates for the cardinal functions $L_{\alpha,c}$.  For now, we restrict our attention to positive values of $\alpha$ as the calculations are essentially the same for negative values.  
Throughout the rest of this section, our calculations will be helped by the fact that \eqref{EQmultiplierdef} may be rewritten as
\[
m_{\alpha,h}(\xi)=\dfrac{\widehat{\phi_{\alpha,1}}(\xi) }{\sum_{j\in\mathbb{Z}^d} \widehat{\phi_{\alpha,1}}(\xi+\frac{2\pi j}{h})} = \dfrac{\widehat{\varphi_{\alpha,1}}(|\xi |)}{\sum_{j\in\mathbb{Z}^d} \widehat{\varphi_{\alpha,1}}(|\xi+\frac{2\pi j}{h}|)  },
\]
where $\varphi$ is the univariate function associated to $\phi$, i.e. $\phi(x)=\varphi(|x|)$.  To estimate the behavior of $m_{\alpha,h}^\vee$, it suffices to estimate the $L_1$ norms of derivatives of $m_{\alpha,h}$, which by use of Leibniz rule in the formula above, requires estimates on derivatives of $\widehat{\varphi_{\alpha,1}}(|\cdot|)$ and its reciprocal.  This study is taken up henceforth.  Since the shape parameter is always 1 in the sequel, we drop the subscript from the subsequent estimates and remind the reader that the constant $C$ below will typically depend on $\alpha,d,$ and the multi-index $\gamma$, but not on $h$.  As a matter of notation, we say that $\beta\leq\gamma$ for multi-indices $\beta$ and $\gamma$ provided $\beta_j\leq\gamma_j$ for all $j$.  Now for any multi-index $\gamma$ with $\multi{\gamma} \geq 1$,
\begin{equation}\label{chain}
D^{\gamma}\widehat{\phi_{\alpha}}( \xi )= \sum_{    \{\beta:  \beta\leq \gamma, \multi{\beta}\geq 1\}}^{\multi{\gamma}}a_\beta\widehat{\varphi_{\alpha}}^{(\multi{\beta})}(| \xi |) \Omega_{\multi{\beta}-\multi{\gamma}}(\xi),
\end{equation}
where $\Omega_l$ is a homogeneous function of degree $l$ (i.e. $\Omega_l(r\xi) = r^l\Omega_l(\xi)$ for any $r>0$).  In fact, these homogeneous functions are simply combinations of (partial) derivatives of the function $\xi\mapsto| \xi |$.  One may prove this by first noting that if $\multi{\gamma}=1$, then \eqref{chain} holds by the chain rule, and then proceeding by induction on $\multi{\gamma}$.

By \eqref{chain} and the nature of $\Omega_l$, there exists a constant $C>0$ such that
\begin{equation}\label{est1}
| D^{\gamma}\widehat{\phi_{\alpha}}( \xi ) | \leq C | \xi |^{-\multi{\gamma}} \sum_{j=1}^{\multi{\gamma}}| \xi |^{j} \widehat{\varphi_{\alpha}}^{(j)}(| \xi |). 
\end{equation}
Having found an upper bound for the derivative in terms of a univariate function, we may now recycle the estimates from Section 7 of \cite{HammLedford} by replacing $\alpha$ with $\alpha+(d-1)/2$ and taking $c=1$.  For instance, for $1\leq\multi{\gamma} < 2\alpha+d$, there is a constant $C>0$ such that
\begin{equation}\label{est2}
|D^\gamma \widehat{\phi_{\alpha}}(\xi)|\leq C  | \xi |^{-2\alpha-d-\multi{\gamma}} e^{-|\xi|}.
\end{equation}
Similarly, for $\multi{\gamma}=2\alpha+d$,
\begin{equation}\label{est3}
 |D^\gamma \widehat{\phi_{\alpha}}(\xi)|\leq  C \left\{ e^{-|\xi|}|\xi|^{-2\alpha-d}\ln(1+|\xi|^{-1}) + e^{-| \xi |} | \xi |^{-4\alpha-2d}  \right\}.
\end{equation}
The previous estimates are Equations (48) and (49) in \cite{HammLedford}. Now a calculation analogous to \eqref{chain} for the reciprocal yields (for $\multi{\gamma}\geq 1$)
\[
D^{\gamma}\left( 1/\widehat{\phi_{\alpha}}  \right)(\xi ) = \sum_{    \{\beta:  \beta\leq \gamma, \multi{\beta}\geq 1\}}^{\multi{\gamma}}a_\beta \left( 1/\widehat{\varphi_{\alpha}}  \right)^{(\multi{\beta})}(|\xi |)\Omega_{\multi{\beta}-\multi{\gamma}}(\xi),
\]
thus we get a similar estimate to \eqref{est1}:
\[
\left| D^{\gamma}\left( 1/\widehat{\phi_{\alpha}}  \right)(\xi )\right  |\leq C| \xi |^{-\multi{\gamma}}\sum_{j=1}^{\multi{\gamma}} | \xi |^{j} \left\vert\left( 1/\widehat{\varphi_{\alpha}} \right)^{(j)}(|\xi |)\right\vert.
\]
For $1\leq \multi{\gamma}< 2\alpha+d$, we obtain
\begin{equation}\label{est5}
| D^{\gamma}\left( 1/\widehat{\phi_{\alpha}}  \right)(\xi )  |\leq Ce^{|\xi|}| \xi |^{2\alpha+d -\multi{\gamma}}.
\end{equation}
 
When $\multi{\gamma}=2\alpha+d$, a logarithmic term appears in \eqref{est3} which must be handled separately.  In this case, we obtain from \cite[Eqs. (50) and (51)]{HammLedford}
\begin{equation}\label{est6}
\left|\dfrac{D^{\gamma}\widehat{\phi_{\alpha}}(\xi)}{(\widehat{\phi_{\alpha}}(\xi))^2}\right| \leq C e^{|\xi |} \left\{ | \xi|^{2\alpha+d}\ln(1+|\xi|^{-1})+1\right\}\leq C e^{|\xi|}.
\end{equation} 
The term on the left of \eqref{est6} appears after applying the Leibniz rule to the term $(1/\widehat{\varphi_{\alpha}})^{(\multi{\gamma})}(|\xi|)$.  This term is the only one containing the logarithm from \eqref{est3}, while the other terms are estimated using \eqref{est5}.  Note that these estimates rely only on the order of the derivative $\multi{\gamma}$ and $\alpha$.

Following Riemenschneider and Sivakumar \cite{RiemSiva}, for $j\neq 0$, define $a_j(\xi):= \widehat{\phi_{\alpha}}(| \xi+\frac{2\pi j}{h}  |)/ \widehat{\phi_{\alpha}}(|\xi |)$ and $s(\xi):=\sum_{j\neq 0}a_j(\xi)$; then $m_{\alpha,h}=(1+s)^{-1}$.  Our estimates once again rely on univariate estimates; however, we will also make use of the infinity norm $\|\xi \|_{\infty}:=\max\{|\xi_1|,\dots,|\xi_d|\}$. Recall that     
\begin{equation}\label{norm equiv}
 d^{-1/2}| \xi | \leq \| \xi \|_{\infty} \leq | \xi |.
 \end{equation}
To estimate $\| D^\gamma m_{\alpha,h}  \|_{L_1}$, we split $\mathbb{R}^d$ into three sections based on $j\in\mathbb{Z}^d$: 
\begin{enumerate}
\item[I.] $\| \xi \|_\infty \leq \pi/h$,
\item[II.] $\xi \in 2\pi j/h + [-\pi/h,\pi/h]^d, \quad | j |=1$, and
\item[III.] $\xi \in 2\pi j/h + [-\pi/h,\pi/h]^d \quad | j |>1$.
\end{enumerate}
These regions need further refinement to avoid the faces of the cube $[-\pi/h, \pi/h]^d$ on which we have no precise estimates on the cardinal functions except the transparent bound $|\widehat{L_{\alpha,c}}(\xi)|\leq1$, which holds for all $\xi$. The estimates for these regions are corollaries of the following lemma.
\begin{lemma}\label{LEM_Daj_ests} 
Suppose that $\alpha>0$, $1\leq \multi{\gamma}\leq 2\alpha+d$, $0<h\leq1$, and $\varepsilon\in [0,1)$.  If $\| \xi \|_\infty \leq (1-\varepsilon)\pi/h$, then
\[
|D^{\gamma}a_j(\xi)|\leq C h^{\multi{\gamma}}e^{-\varepsilon\pi/(\sqrt{d}h)}\begin{cases}1, & |j|=1  \\ e^{-2\pi|j|/(3dh)}, & |j|>1\end{cases},
\]
where $C>0$ is independent of $h$ and is uniformly bounded for $\eps\in[0,1)$.
\end{lemma}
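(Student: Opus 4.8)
The plan is to write $a_j$ as a product, differentiate by the Leibniz rule, feed the resulting factors into the univariate-type bounds \eqref{est2}--\eqref{est6} imported from \cite{HammLedford}, and then extract all of the $h$-, $\varepsilon$- and $j$-dependence from the geometry of the hypothesis $\|\xi\|_\infty\le(1-\varepsilon)\pi/h$. Setting $v_j:=\tfrac{2\pi j}{h}$, so that $a_j(\xi)=\widehat{\phi_\alpha}(\xi+v_j)\cdot\bigl(1/\widehat{\phi_\alpha}\bigr)(\xi)$, one has
\[
D^\gamma a_j(\xi)=\sum_{\beta\le\gamma}\binom{\gamma}{\beta}\bigl(D^\beta\widehat{\phi_\alpha}\bigr)(\xi+v_j)\,\bigl(D^{\gamma-\beta}(1/\widehat{\phi_\alpha})\bigr)(\xi).
\]
For $1\le\multi{\beta}<2\alpha+d$ I would bound $|(D^\beta\widehat{\phi_\alpha})(\xi+v_j)|$ by \eqref{est2}, and for $1\le\multi{\gamma-\beta}<2\alpha+d$ bound $|(D^{\gamma-\beta}(1/\widehat{\phi_\alpha}))(\xi)|$ by \eqref{est5}; the remaining terms, i.e. $\beta=0$, $\beta=\gamma$, and (since $\multi{\gamma}\le2\alpha+d$) the boundary order $2\alpha+d$—which can only occur at those two endpoints—are handled with the corresponding order-zero estimates and with \eqref{est3} and \eqref{est6}. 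Multiplying the two bounds, each Leibniz summand is dominated, up to a constant depending only on $\alpha,d,\gamma$, by a quantity of the shape
\[
|\xi+v_j|^{-a}\,|\xi|^{\,b}\,e^{-(|\xi+v_j|-|\xi|)},\qquad a,b\ge0,\ \ a-b=\multi{\gamma},
\]
the equality $a-b=\multi{\gamma}$ holding for all but the two endpoint terms with $\multi{\gamma}=2\alpha+d$, for which a residual logarithmic factor remains and the corresponding shortfall in the power of $h$ is later made up by the exponential.

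For the polynomial factor one uses only that on this region $|\xi|\le\sqrt d\,\pi/h$ while $|\xi+v_j|\ge\|\xi+v_j\|_\infty\ge\tfrac{2\pi}{h}-\tfrac{(1-\varepsilon)\pi}{h}\ge\tfrac{\pi}{h}$, so $|\xi+v_j|^{-a}|\xi|^{b}\le C\,h^{a}h^{-b}=C\,h^{a-b}=C\,h^{\multi{\gamma}}$, exactly the power of $h$ in the statement. For the exponential factor the key tool is the identity
\[
|\xi+v_j|^2-|\xi|^2=\tfrac{4\pi}{h}\langle j,\xi\rangle+\tfrac{4\pi^2}{h^2}|j|^2,
\]
together with $|\langle j,\xi\rangle|\le\|j\|_1\|\xi\|_\infty$ and the norm comparison \eqref{norm equiv}. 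When $|j|=1$, say $j=\pm e_k$, one has $\langle j,\xi\rangle\ge-|\xi_k|\ge-(1-\varepsilon)\pi/h$, hence $|\xi+v_j|^2-|\xi|^2\ge\tfrac{4\pi^2\varepsilon}{h^2}$; dividing by $|\xi+v_j|+|\xi|\le C\sqrt d/h$ gives $|\xi+v_j|-|\xi|\ge\pi\varepsilon/(\sqrt d\,h)$, so $e^{-(|\xi+v_j|-|\xi|)}\le e^{-\pi\varepsilon/(\sqrt d\,h)}$, and summing the $2^{\multi{\gamma}}$ Leibniz terms settles this case.

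I expect the case $|j|>1$ to be the main obstacle, since there one must produce, on top of the $\varepsilon$-gain, the additional factor $e^{-2\pi|j|/(3dh)}$. The cross term $\tfrac{4\pi}{h}\langle j,\xi\rangle$ can be as negative as $-\tfrac{4\pi^2(1-\varepsilon)}{h^2}\|j\|_1$, and for lattice vectors all of whose nonzero coordinates equal $\pm1$ one has $\|j\|_1=|j|^2$, so the bare identity yields only a gain proportional to $\varepsilon$; this is exactly why the regions of Section~\ref{SECmestimates} must first be refined so as to stay off the faces of $[-\pi/h,\pi/h]^d$. I would split the sum over $j$ accordingly: if $j$ has a coordinate of modulus at least $2$, then a single coordinate of $\xi+v_j$ already has modulus at least $\tfrac{3\pi}{h}$, and combining this with a coordinate-by-coordinate estimate of the remaining ones gives, via $\|j\|_\infty\ge|j|/\sqrt d$, a lower bound for $|\xi+v_j|-|\xi|$ linear in $|j|$; if instead every nonzero coordinate of $j$ is $\pm1$, I would estimate $|\xi+v_j|^2-|\xi|^2$ coordinate by coordinate—each of the $|j|^2$ nonzero coordinates contributing $\tfrac{4\pi^2}{h^2}$, with the loss against $|\xi|$ under control once $\xi$ is off the faces—and then balance against $\|j\|_1\le\sqrt d\,|j|$ and $|\xi+v_j|+|\xi|\le C\sqrt d/h$ to land on the exponent $\tfrac{2\pi|j|}{3dh}$. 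The remaining bookkeeping—convergence of $\sum_{|j|>1}e^{-2\pi|j|/(3dh)}$ and the tracking of the constant's dependence on $\varepsilon$—is routine once the estimates above are in place.
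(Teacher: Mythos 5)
Your setup --- Leibniz applied to $a_j=\widehat{\phi_\alpha}(\cdot+v_j)\cdot(1/\widehat{\phi_\alpha})$ (writing $v_j:=2\pi j/h$ as you do), the imported bounds \eqref{est2}--\eqref{est6}, the extraction of $h^{\multi{\gamma}}$ from $|\xi+v_j|\ge\pi/h$ and $|\xi|\le\sqrt d\,\pi/h$, and the identity $|\xi+v_j|^2-|\xi|^2=\tfrac{4\pi}{h}\langle j,\xi\rangle+\tfrac{4\pi^2}{h^2}|j|^2$ --- is exactly the paper's argument, and your treatment of $|j|=1$ matches it. The gap is in the subcase you yourself single out: $|j|>1$ with every nonzero coordinate of $j$ equal to $\pm1$ (e.g.\ $j=(1,1)$ when $d=2$). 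Your coordinate-by-coordinate estimate there gives, per nonzero coordinate, $\tfrac{4\pi}{h}j_i\xi_i+\tfrac{4\pi^2}{h^2}\ge\tfrac{4\pi^2\varepsilon}{h^2}$, hence in total only $|\xi+v_j|-|\xi|\ge\pi\varepsilon|j|^2/(\sqrt d\,h)$ --- a gain proportional to $\varepsilon$, which no amount of balancing can convert into the $\varepsilon$-independent factor $e^{-2\pi|j|/(3dh)}$. Indeed no argument can produce that factor for such $j$: at $\xi=-(1-\varepsilon)\tfrac{\pi}{h}\,\sgn(j)$ one computes $|\xi+v_j|-|\xi|=2\varepsilon\pi|j|/h$ exactly, so $a_j$ itself is only $e^{-O(\varepsilon/h)}$ times algebraic factors, and the asserted inequality fails for small $\varepsilon$ as $h\to0$.

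The resolution is that the dichotomy is really on $\|j\|_\infty$, not on the Euclidean norm $|j|$: the paper's proof treats all $3^d-1$ vectors with $\|j\|_\infty=1$ (including those with $|j|>1$) with the weaker bound $Ch^{\multi{\gamma}}e^{-\varepsilon\pi/(\sqrt d\,h)}$ --- one in fact gets $e^{-k\varepsilon\pi/(\sqrt d\,h)}$ where $k=|j|^2$ is the number of nonzero coordinates --- and reserves the extra exponential for $\|j\|_\infty=M\ge2$, where the inequality $|j|^2-(1-\varepsilon)\|j\|_1\ge M(M-1+\varepsilon)$ yields $|\xi+v_j|-|\xi|\ge\tfrac{2\pi M}{3\sqrt d\,h}+\tfrac{\varepsilon\pi}{\sqrt d\,h}$, and then $M\ge|j|/\sqrt d$ gives the stated exponent. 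This is also exactly how Corollary \ref{Dm_est1} consumes the lemma (note the $3^d-1$ count of ``first case'' terms there). So keep your argument, but reassign the all-$\pm1$ lattice vectors to the first case rather than trying to force the second estimate on them; your handling of $\|j\|_\infty\ge2$ via a coordinate of modulus at least $2$ is then sound and agrees with the paper.
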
 
\begin{proof}
By using \eqref{est2},\eqref{est3},\eqref{est5}, and \eqref{est6}, we have for $1\leq \multi{\gamma}\leq 2\alpha +d$,
\begin{equation}\label{a_j}
|D^{\gamma}a_j (\xi)| \leq C e^{| \xi |-| \xi+\frac{2\pi j}{h}  |}\sum_{\{\beta: \beta\leq \gamma, \multi{\beta}\geq 1\}}\dfrac{| \xi |^{2\alpha+d-\multi{\beta}}}{\left| \xi+\frac{2\pi j}{h} \right|^{2\alpha+d-\multi{\beta}+\multi{\gamma}}}\;.
\end{equation}
 Notice that, by \eqref{norm equiv}, the summands in \eqref{a_j} satisfy
\begin{align*}
\dfrac{| \xi |^{2\alpha+d-\multi{\beta}}}{\left| \xi+\frac{2\pi j}{h} \right|^{2\alpha+d-\multi{\beta}+\multi{\gamma}}}  \leq & h^{\multi{\gamma}}\dfrac{\sqrt{d}((1-\varepsilon)\pi)^{2\alpha+d-\multi{\beta}}}{((1+\varepsilon)\pi)^{2\alpha+d-\multi{\beta}+\multi{\gamma}}} \\
\leq & Ch^{\multi{\gamma}},
\end{align*}
where $C>0$ is a constant independent of $h$.
This being the desired estimate for the sum, we turn our attention to the exponential term in \eqref{a_j}, noting that the exponent may be written as
 \begin{equation}\label{exponent}
 | \xi |-\left| \xi+\frac{2\pi j}{h}  \right| = \dfrac{-4(\pi/h)\left((\pi/h)| j |^2+\langle \xi, j\rangle \right)}{| \xi|+|\xi+\frac{2\pi j}{h} | }.
 \end{equation}
 Next, separate the integers into two cases: (i) $\| j \|_\infty =1$, and (ii) $\| j \|_\infty =M >1  $.  In case (i), the expression on the right hand side of \eqref{exponent} never changes sign because $\| \xi \|_\infty \leq (1-\varepsilon)\pi/h$.  Thus, if $j$ has $1\leq k\leq d$ non-zero components, we obtain
 \[
\dfrac{-4(\pi/h)\left((\pi/h)| j |^2+\langle \xi, j\rangle \right)}{| \xi|+|\xi+\frac{2\pi j}{h} | } \leq \dfrac{-k\varepsilon \pi}{\sqrt{d}h} \leq \dfrac{-\varepsilon \pi}{\sqrt{d}h}.
\]
Hence, 
\begin{equation}\label{j=1}
|D^\gamma a_j(\xi)|\leq C h^{\multi{\gamma}} e^{-(\varepsilon \pi)/(\sqrt{d}h)}.
\end{equation}
In case (ii), the following holds by similar reasoning to that in case (i):
\begin{align*}
\dfrac{-4(\pi/h)\left((\pi/h)| j |^2+\langle \xi, j\rangle \right)}{| \xi|+|\xi+\frac{2\pi j}{h} | } \leq &\dfrac{-2(\pi/h)\left( (M^2-M)+ \varepsilon M \right)}{\sqrt{d}(M+1-\varepsilon)} \\
 \leq &  \dfrac{-2\pi M}{3\sqrt{d}h}+ \dfrac{-\epsilon\pi}{\sqrt{d}h}.
\end{align*}
Thus we have
\begin{align*}
|D^\gamma a_j(\xi)|\leq & C h^{\multi{\gamma}}e^{-(\varepsilon \pi)/(\sqrt{d}h)}e^{-(2\pi \| j \|_\infty)/(3\sqrt{d}h)} \\
 \leq & C h^{\multi{\gamma}}e^{-(\varepsilon \pi)/(\sqrt{d}h)}e^{-(2\pi | j |)/(3dh)} .
\end{align*}
\end{proof}

\begin{corollary}\label{Dm_est1}
Suppose that $\alpha>0$, $1\leq \multi{\gamma}\leq 2\alpha+d$, $0<h\leq1$, and $\varepsilon\in [0,1)$.  If $\| \xi \|_\infty \leq (1-\varepsilon)\pi/h$, then
\[
|D^{\gamma}m_{\alpha,h}(\xi)|\leq C h^{\multi{\gamma}}e^{-\varepsilon\pi/(\sqrt{d}h)},
\]
where $C>0$ is independent of $h$.
\end{corollary}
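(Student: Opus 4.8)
The plan is to exploit the representation $m_{\alpha,h}=(1+s)^{-1}$ recorded above: I would control the ``inner'' derivatives $D^\beta s$ directly from Lemma \ref{LEM_Daj_ests}, control the ``outer'' composition with $t\mapsto t^{-1}$ by a lower bound on $1+s$, and glue the two together with the Fa\`a di Bruno formula.

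First I would observe that $1+s(\xi)\geq 1$ on the region in question. For $\alpha>0$ with $\alpha\notin\N_0$, the generalized Fourier transform $\widehat{\varphi_\alpha}$ has constant sign on $(0,\infty)$ (from its explicit expression as a positive modified Bessel function times the fixed constant $2^{1+\alpha}/\Gamma(-\alpha)$), so each $a_j(\xi)=\widehat{\varphi_\alpha}(|\xi+\tfrac{2\pi j}{h}|)/\widehat{\varphi_\alpha}(|\xi|)$ is nonnegative; hence $s(\xi)=\sum_{j\neq0}a_j(\xi)\geq 0$ and $1+s(\xi)\geq1$. Since $m_{\alpha,h}$ is already known to be a well-defined function with $\|m_{\alpha,h}\|_{L_\infty}\leq1$, there is no obstruction to differentiating $(1+s)^{-1}$.

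Next I would estimate $D^\beta s$ for every multi-index with $1\leq\multi{\beta}\leq\multi{\gamma}$. Because $\multi{\gamma}\leq 2\alpha+d$ by hypothesis, each such $\beta$ satisfies $1\leq\multi{\beta}\leq 2\alpha+d$, which is precisely the range in which Lemma \ref{LEM_Daj_ests} is valid; applying it termwise (the bounds are summable in $j$, which also justifies differentiating $s=\sum_{j\neq0}a_j$ term by term) gives, for $\|\xi\|_\infty\leq(1-\varepsilon)\pi/h$,
\[
|D^\beta s(\xi)|\;\leq\; C h^{\multi{\beta}}e^{-\varepsilon\pi/(\sqrt{d}h)}\Bigl(1+\sum_{|j|>1}e^{-2\pi|j|/(3dh)}\Bigr)\;\leq\; C h^{\multi{\beta}}e^{-\varepsilon\pi/(\sqrt{d}h)},
\]
since the series converges and remains bounded for $0<h\leq1$ and the finitely many $|j|=1$ terms are absorbed into $C$.

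Finally I would assemble the bound. Writing $m_{\alpha,h}=g(1+s)$ with $g(t)=t^{-1}$, and noting $D^\beta(1+s)=D^\beta s$ for $\multi{\beta}\geq1$, the multivariate Fa\`a di Bruno formula expresses $D^\gamma m_{\alpha,h}$ as a finite sum — with combinatorial weights depending only on $\multi{\gamma}$ and $d$ — of terms $g^{(n)}(1+s)\prod_{i=1}^n D^{\beta_i}s$ with $1\leq n\leq\multi{\gamma}$, $\multi{\beta_i}\geq1$, and $\sum_i\beta_i=\gamma$. Using $|g^{(n)}(1+s)|=n!(1+s)^{-n-1}\leq n!$ from the second paragraph and $\prod_i|D^{\beta_i}s|\leq C^n h^{\multi{\gamma}}e^{-n\varepsilon\pi/(\sqrt{d}h)}$ from the third, together with $e^{-n\varepsilon\pi/(\sqrt{d}h)}\leq e^{-\varepsilon\pi/(\sqrt{d}h)}$ for $n\geq1$, $\varepsilon\geq0$, each term is $\leq C h^{\multi{\gamma}}e^{-\varepsilon\pi/(\sqrt{d}h)}$, and summing the finitely many of them gives the corollary; the constant inherits its uniformity in $\varepsilon\in[0,1)$ from Lemma \ref{LEM_Daj_ests}. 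The only point needing attention — not a genuine obstacle — is that every intermediate differentiation order $\multi{\beta_i}$ must stay within $[1,2\alpha+d]$, which is guaranteed by $\multi{\beta_i}\leq\multi{\gamma}\leq 2\alpha+d$; beyond this the argument is routine bookkeeping.
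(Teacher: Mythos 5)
Your proposal is correct and follows essentially the same route as the paper: bound $D^\beta s$ by summing the estimates of Lemma \ref{LEM_Daj_ests} over $j$, then use $s\geq 0$ together with the chain/Leibniz rule for $(1+s)^{-1}$ to conclude. You simply spell out the Fa\`a di Bruno bookkeeping and the nonnegativity of the $a_j$ in more detail than the paper does.
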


\begin{proof}
From Lemma \ref{LEM_Daj_ests}, we see that
\begin{align*}
 |D^\gamma s(\xi)|\leq & C h^{\multi{\gamma}} e^{-(\varepsilon \pi)/(\sqrt{d}h)} \left[ (3^d-1)+ \sum_{|j|\geq 2} e^{-(2\pi | j|)/(3dh)}  \right]\\
 \leq & C h^{\multi{\gamma}}e^{-(\varepsilon\pi)/(\sqrt{d}h)};
\end{align*}
by noting that $s(\xi)\geq 0$ and applying the Leibniz rule, we find that
\[
|D^\gamma m_{\alpha,h}(\xi)|\leq C h^{\multi{\gamma}}e^{-(\varepsilon\pi)/(\sqrt{d}h)}.
\]
\end{proof}

\begin{corollary}\label{Dm_est2}
Suppose that $\alpha>0$, $1\leq \multi{\gamma}\leq 2\alpha+d$, and $0<h\leq1$.  If $ \xi\in 2\pi j/h+[-\pi/h,\pi/h]^d$, where $|j|>1$, then
\[
|D^{\gamma}m_{\alpha,h}(\xi)|\leq C h^{\multi{\gamma}}e^{-2\pi|j|/(3dh)},
\]
where $C>0$ is independent of $h$.
\end{corollary}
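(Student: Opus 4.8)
The plan is to exploit the periodicity of the denominator defining $m_{\alpha,h}$ so as to transplant the estimate for $D^\gamma m_{\alpha,h}$ on the cube $\frac{2\pi j}{h}+[-\pi/h,\pi/h]^d$ back to a product of two quantities already controlled on the central cube $\|\cdot\|_\infty\leq\pi/h$. Fix $j\in\Z^d$ with $|j|>1$ and set $\eta:=\xi-\frac{2\pi j}{h}$, so that the hypothesis $\xi\in\frac{2\pi j}{h}+[-\pi/h,\pi/h]^d$ is exactly $\|\eta\|_\infty\leq\pi/h$. Reindexing the sum in the denominator of $m_{\alpha,h}$ by $l\mapsto l+j$ shows that this denominator is invariant under translation by $\frac{2\pi j}{h}$, and hence
\[
m_{\alpha,h}(\xi)=\frac{\widehat{\varphi_{\alpha}}(|\eta+\tfrac{2\pi j}{h}|)}{\sum_{l\in\Z^d}\widehat{\varphi_{\alpha}}(|\eta+\tfrac{2\pi l}{h}|)}=\frac{\widehat{\varphi_{\alpha}}(|\eta+\tfrac{2\pi j}{h}|)}{\widehat{\varphi_{\alpha}}(|\eta|)}\cdot\frac{\widehat{\varphi_{\alpha}}(|\eta|)}{\sum_{l\in\Z^d}\widehat{\varphi_{\alpha}}(|\eta+\tfrac{2\pi l}{h}|)}=a_j(\eta)\,m_{\alpha,h}(\eta),
\]
with $a_j$ and $m_{\alpha,h}$ the functions from the discussion preceding Lemma \ref{LEM_Daj_ests}; as there, $\widehat{\varphi_{\alpha}}$ does not vanish or change sign, so all ratios are meaningful, and the series converges by the exponential decay of $\widehat{\varphi_{\alpha}}$ established in \cite{HammLedford}.

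Next I would differentiate this product. Since $\xi\mapsto\eta$ is a translation, $D^\gamma$ in $\xi$ agrees with $D^\gamma$ in $\eta$, so the Leibniz rule gives
\[
D^\gamma m_{\alpha,h}(\xi)=\sum_{\beta\leq\gamma}\binom{\gamma}{\beta}D^\beta a_j(\eta)\,D^{\gamma-\beta}m_{\alpha,h}(\eta).
\]
Since $\|\eta\|_\infty\leq\pi/h$, I apply Lemma \ref{LEM_Daj_ests} and Corollary \ref{Dm_est1} with $\varepsilon=0$. For $\beta\leq\gamma$ with $1\leq\multi{\beta}$ (so $\multi{\beta}\leq\multi{\gamma}\leq2\alpha+d$), Lemma \ref{LEM_Daj_ests} yields $|D^\beta a_j(\eta)|\leq Ch^{\multi{\beta}}e^{-2\pi|j|/(3dh)}$, the decay factor being present precisely because $|j|>1$ and surviving at $\varepsilon=0$; for $1\leq\multi{\gamma-\beta}$, Corollary \ref{Dm_est1} yields $|D^{\gamma-\beta}m_{\alpha,h}(\eta)|\leq Ch^{\multi{\gamma-\beta}}$. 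The two multi-index endpoints are handled directly: for $\beta=\gamma$ one uses the transparent bound $|m_{\alpha,h}(\eta)|\leq1$, which holds everywhere and in particular on the faces of the cube; for $\beta=0$ one uses the $\multi{\gamma}=0$ analogue of \eqref{a_j}, namely $|a_j(\eta)|\leq Ce^{|\eta|-|\eta+\frac{2\pi j}{h}|}|\eta|^{2\alpha+d}/|\eta+\frac{2\pi j}{h}|^{2\alpha+d}$, together with the estimate \eqref{exponent} of the exponent used in the proof of Lemma \ref{LEM_Daj_ests}, which for $\|\eta\|_\infty\leq\pi/h$ and $|j|>1$ gives $|a_j(\eta)|\leq Ce^{-2\pi|j|/(3dh)}$. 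Multiplying the factors in each term and summing over the finitely many $\beta\leq\gamma$ then produces $|D^\gamma m_{\alpha,h}(\xi)|\leq Ch^{\multi{\gamma}}e^{-2\pi|j|/(3dh)}$ with $C$ depending only on $\gamma,\alpha,d$, which is the claim.

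I do not expect a serious obstacle: the content is entirely in the periodization identity $m_{\alpha,h}(\xi)=a_j(\eta)\,m_{\alpha,h}(\eta)$ and in the observation that, in contrast to Region II, the factor $e^{-2\pi|j|/(3dh)}$ available for $|j|>1$ already supplies the required smallness, so one may take $\varepsilon=0$ in Lemma \ref{LEM_Daj_ests} and Corollary \ref{Dm_est1} without any refinement of the region to avoid the faces of $[-\pi/h,\pi/h]^d$. The only mild bookkeeping points are to track the multi-index endpoint terms $\beta=0$ and $\beta=\gamma$ as above and to confirm that all constants are uniform in $j$, which is automatic since the sole $j$-dependence in the bounds is through the exponential.
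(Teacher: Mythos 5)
Your proof is correct and follows essentially the same route as the paper: the periodization identity $m_{\alpha,h}(\xi)=a_j(\eta)\,m_{\alpha,h}(\eta)$ for $\eta=\xi-\tfrac{2\pi j}{h}$ in the central cube, followed by the Leibniz rule together with Lemma \ref{LEM_Daj_ests} and Corollary \ref{Dm_est1} at $\varepsilon=0$. Your explicit handling of the endpoint terms $\beta=0$ (the zeroth-order bound on $a_j$) and $\beta=\gamma$ (the bound $|m_{\alpha,h}|\leq1$) is a detail the paper leaves implicit, and it is handled correctly.
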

\begin{proof}
 We can write $\xi=2\pi j/h + r$, where $r\in [-\pi/h,\pi/h]^d$, and we have 
\[
m_{\alpha,h}(\xi) = a_{j}(r)m_{\alpha,h}(r).
\]
Consequently, we can use the estimates in region I (Lemma \ref{LEM_Daj_ests}) with $\varepsilon=0$ and the Leibniz rule to obtain
\[
|D^\gamma m_{\alpha,h}(\xi)|\leq C h^{\multi{\gamma}} e^{-(2\pi |j|)/(3dh)}.
\]
\end{proof}
As mentioned before, on a face of the cube $[-\pi/h,\pi/h]^d$, we must be more careful due to the lack of bounds on the Fourier transform of the cardinal function there.  To avoid these we introduce the following regions when $|j|=1$:
\begin{align*}
    R_{i+}&:=\left\{(\xi_1,\dots,\xi_d):-(1-\varepsilon)\pi/h\leq \xi_i \leq 3\pi/h, |\xi_k|\leq\pi/h, k\neq i    \right\} \text { and }\\
    R_{i-}&:=\left\{ (\xi_1,\dots,\xi_d):-3\pi/h\leq\xi_i\leq -(1-\varepsilon)\pi/h, |\xi_k|\leq\pi/h, k\neq i    \right\}.
\end{align*}
Now we define $R_j$ to be $R_{\pm i}$ where the plus or minus is chosen to match the $i^{th}$ component of $j$.
\begin{corollary}\label{Dm_est3}
Suppose that $\alpha>0$, $1\leq \multi{\gamma}\leq 2\alpha+d$, $0<h\leq1$, and $\varepsilon\in [0,1)$.  If $|j|=1$ and $\xi \in R_{j}$, then
\[
|D^{\gamma}m_{\alpha,h}(\xi)|\leq C h^{\multi{\gamma}}e^{-\varepsilon\pi/(\sqrt{d}h)},
\]
where $C>0$ is independent of $h$.
\end{corollary}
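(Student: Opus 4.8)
The plan is to adapt the arguments of Corollaries~\ref{Dm_est1} and~\ref{Dm_est2}, splitting $R_j$ into the part lying over the central cube $[-\pi/h,\pi/h]^d$ and the part lying over the cube centred at $\tfrac{2\pi}{h}j$. Since $m_{\alpha,h}$ is invariant under permutations and sign changes of the coordinates (because $\phi_\alpha$ is radial and $\mathbb{Z}^d$ is symmetric) and $|D^\gamma m_{\alpha,h}|$ transforms covariantly, it suffices to treat $j=e_1$, so that $R_j=R_{1+}=\{\xi:-(1-\varepsilon)\pi/h\le\xi_1\le 3\pi/h,\ |\xi_k|\le\pi/h\ (k\ne 1)\}$. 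Accordingly I would write $R_{1+}=A\cup B$ with $A=\{\xi\in R_{1+}:\xi_1\le\pi/h\}$ and $B=\{\xi\in R_{1+}:\xi_1\ge\pi/h\}$, and treat each piece in the style of one of the two preceding corollaries.

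On $A$ one argues as in Corollary~\ref{Dm_est1}. Writing $m_{\alpha,h}=(1+s)^{-1}$ with $s=\sum_{l\ne 0}a_l\ge 0$ and using the Leibniz rule, it suffices to bound $|D^\beta s(\xi)|\le\sum_{l\ne 0}|D^\beta a_l(\xi)|$ for $\beta\le\gamma$. One recycles \eqref{a_j}: the polynomial factors are controlled via \eqref{norm equiv} and the box constraints on $A$, giving $h^{\multi{\beta}}$, while the exponent \eqref{exponent} is estimated using $\xi_1\ge-(1-\varepsilon)\pi/h$ (together with $|\xi_k|\le\pi/h$) in the same way $\xi_i\le(1-\varepsilon)\pi/h$ was used in Lemma~\ref{LEM_Daj_ests}: provided $\xi_1$ is kept bounded away from $\pi/h$ (which is arranged below), for $\|l\|_\infty=1$ this produces a factor $e^{-\varepsilon\pi/(\sqrt d h)}$ (the positive gap coming from $\xi_1+\pi/h\ge\varepsilon\pi/h$ for $l=e_1$ and from $\pi/h-\xi_1$ for $l=-e_1$), and for $\|l\|_\infty=M>1$ the extra factor $e^{-2\pi M/(3\sqrt d h)}$; summing over $l$ and applying the Leibniz rule to $(1+s)^{-1}$ gives $|D^\gamma m_{\alpha,h}(\xi)|\le Ch^{\multi{\gamma}}e^{-\varepsilon\pi/(\sqrt d h)}$ on $A$. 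On $B$ one passes to $r:=\xi-\tfrac{2\pi}{h}e_1\in[-\pi/h,\pi/h]^d$ and uses the $\tfrac{2\pi}{h}$-periodicity of the denominator of $m_{\alpha,h}$ to write $m_{\alpha,h}(\xi)=a_{e_1}(r)\,m_{\alpha,h}(r)$, exactly the device of Corollary~\ref{Dm_est2}; by the Leibniz rule $|D^\gamma m_{\alpha,h}(\xi)|$ is a sum of products $|D^{\beta'}a_{e_1}(r)|\,|D^{\beta''}m_{\alpha,h}(r)|$ with $\beta'+\beta''\le\gamma$, where $|D^{\beta''}m_{\alpha,h}(r)|\le Ch^{\multi{\beta''}}$ by Corollary~\ref{Dm_est1} with $\varepsilon=0$, and $|D^{\beta'}a_{e_1}(r)|\le Ch^{\multi{\beta'}}e^{-\varepsilon\pi/(\sqrt d h)}$ by the reasoning of Lemma~\ref{LEM_Daj_ests} applied to $a_{e_1}$ at the point $r$, the exponential coming from $|r+\tfrac{2\pi}{h}e_1|-|r|=|\xi|-|\xi-\tfrac{2\pi}{h}e_1|\gtrsim(\xi_1-\pi/h)/h$ once $\xi_1$ is bounded away from $\pi/h$, with $\xi_1\le 3\pi/h$ keeping $r$ in the cube so that the region~I estimates apply.

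The main obstacle is the interface between $A$ and $B$---a neighbourhood of the hyperplane $\{\xi_1=\pi/h\}$, the common face of the cubes centred at $0$ and at $\tfrac{2\pi}{h}e_1$---together with the faces $\{|\xi_k|=\pi/h\}$, $k\ne 1$, of $R_{1+}$: on such a face the relevant lattice neighbour has norm equal to $|\xi|$ (e.g.\ $a_{-e_1}(\xi)=1$ on $\{\xi_1=\pi/h\}$), so neither representation alone yields an exponential gap. The way I would handle this is to split $A$ and $B$ not at $\xi_1=\pi/h$ but at $\xi_1=(1-c_d\varepsilon)\pi/h$ and $\xi_1=(1+\varepsilon)\pi/h$, with $c_d=\tfrac14(1+\sqrt{1+8/d})$ chosen precisely so that on $A$ even the reflected term $a_{-e_1}$ carries the factor $e^{-\varepsilon\pi/(\sqrt d h)}$, and likewise to keep $|\xi_k|\le(1-\varepsilon)\pi/h$ on the ``interior'' pieces so that Lemma~\ref{LEM_Daj_ests} applies verbatim there; the remaining thin slabs around the faces sit inside the refined central region or a neighbouring $R_{i\pm}$ (or, where two such faces meet, inside a region~III cube handled by Corollary~\ref{Dm_est2}), and one then verifies that the required bound is available there with a fresh choice of the parameter. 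Checking that this splitting can be arranged consistently and that $C$ stays independent of $h$ and bounded as $\varepsilon\to 0^+$---exactly as in Lemma~\ref{LEM_Daj_ests}---is the only genuinely delicate bookkeeping; everything else transcribes the reasoning already carried out for Corollaries~\ref{Dm_est1} and~\ref{Dm_est2}.
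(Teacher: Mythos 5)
Your two main sub-arguments are sound, and the second one is precisely the paper's entire proof: the paper treats only the piece $\xi_1\in[(1+\varepsilon)\pi/h,\,3\pi/h]$ of $R_{e_1}$, writes $\xi=\frac{2\pi}{h}e_1+r$ with $r\in[-(1-\varepsilon)\pi/h,\pi/h]\times[-\pi/h,\pi/h]^{d-1}$, observes that the estimate \eqref{j=1} for $a_{e_1}(r)$ persists on this enlarged box because the exponent \eqref{exponent} still satisfies $(\pi/h)+r_1\geq\varepsilon\pi/h$, and then applies the Leibniz rule together with Corollary \ref{Dm_est1} at $\varepsilon=0$ for the $m_{\alpha,h}(r)$ factor. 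Your treatment of the left piece by the region-I argument is likewise fine on the set where every coordinate stays a distance of order $\varepsilon\pi/h$ away from the faces $\pm\pi/h$. Your diagnosis of the interface problem is correct and indeed more careful than the paper's own discussion.

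The genuine gap is in your proposed repair of that interface, which cannot work. At a point such as $\xi=(\pi/h,0,\dots,0)$ or $(0,\pi/h,0,\dots,0)$, both of which lie in $R_{e_1}$, the reflected term $a_{-e_1}$ (respectively $a_{-e_2}$) has $\left|\xi-\frac{2\pi}{h}e_i\right|=|\xi|$, so the exponent \eqref{exponent} is exactly zero and \eqref{a_j} yields only $|D^{\gamma}a_{-e_i}(\xi)|\leq Ch^{[\gamma]}$ with no exponential factor; hence no estimate available in the paper produces $Ch^{[\gamma]}e^{-\varepsilon\pi/(\sqrt{d}h)}$ there for fixed $\varepsilon>0$ and small $h$. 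Reassigning the offending slabs to a neighbouring $R_{i\pm}$ does not help: those regions are governed by the very corollary being proved (so the argument is circular), and the problematic face is a face of the neighbouring region as well, so the obstruction travels with the point rather than disappearing under a ``fresh choice of the parameter.'' The consistent resolution --- and what the paper implicitly does --- is to establish the stated bound only on the sub-box $[(1+\varepsilon)\pi/h,3\pi/h]\times[-\pi/h,\pi/h]^{d-1}$ and its images under the coordinate symmetries, which is exactly where the corollary is later invoked with $\varepsilon>0$ (Lemma \ref{LEMgammaestimate} and Section \ref{SECstabilitymultivariate}); on the remainder of $R_j$ one settles for the $\varepsilon=0$ bound $Ch^{[\gamma]}$, which is all that Theorem \ref{DmL1} requires.
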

\begin{proof}
This estimate follows similarly to that for Corollary \ref{Dm_est2}, except we must be careful when a component of $j$ approaches $\pm \pi$.  We will outline what to do in the case that $j=e_1=(1,0,\dots,0)$ and omit the other cases since they are handled in a completely similar manner.  Consider $\xi\in [(1+\varepsilon)\pi,3\pi]\times[-\pi,\pi]^{d-1}$.  We may write $\xi=2\pi e_1/h +r $, where $r\in [-(1-\varepsilon)\pi,\pi]\times[-\pi,\pi]^{d-1}$, and again we have
\[
m_{\alpha,h}(\xi)=a_{e_1}(r)m_{\alpha,h}(r).
\]
For this region, the estimate in \eqref{j=1} still holds, so we may apply the Leibniz rule to obtain
\[
|D^\gamma m_{\alpha,h}(\xi)|\leq C h^{\multi{\gamma}}e^{-(\varepsilon\pi)/(\sqrt{d}h)}.
\]

\end{proof}
Combining these estimates yields the main result in this section.
\begin{theorem}\label{DmL1}
Suppose that $\alpha>0$, $0<h\leq 1$ and $1\leq \multi{\gamma}\leq 2\alpha+d$.  Then there exists a constant $C>0$, independent of $h$, such that
\[
\| D^\gamma m_{\alpha,h}  \|_{L_1(\mathbb{R}^d)}\leq C h^{\multi{\gamma}}.
\]
\end{theorem}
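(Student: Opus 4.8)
The plan is to estimate $\| D^\gamma m_{\alpha,h}\|_{L_1(\mathbb{R}^d)}$ by decomposing $\mathbb{R}^d$ into the regions I, II, III described above (with the refinements $R_{i\pm}$ near the faces of the central cube), integrating the pointwise bounds from Corollaries \ref{Dm_est1}, \ref{Dm_est2}, and \ref{Dm_est3} over each piece, and summing the contributions. Throughout, $\multi{\gamma}$ lies in $[1,2\alpha+d]$ so that all four derivative estimates \eqref{est2}--\eqref{est6} are available, and hence so are the corollaries.

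First I would handle region I, the central cube $[-\pi/h,\pi/h]^d$ minus the troublesome boundary layers. Here I would split further into the ``deep interior'' $\| \xi\|_\infty \leq (1-\varepsilon)\pi/h$ and a thin shell of width $O(\varepsilon\pi/h)$ near the faces. On the deep interior, Corollary \ref{Dm_est1} gives $|D^\gamma m_{\alpha,h}(\xi)|\leq C h^{\multi{\gamma}} e^{-\varepsilon\pi/(\sqrt d h)}$, and since the cube has volume $(2\pi/h)^d$, the integral is bounded by $C h^{\multi{\gamma}-d} e^{-\varepsilon\pi/(\sqrt d h)}$; the key point is that the exponential decay in $1/h$ beats the polynomial blowup $h^{-d}$, so this is $\leq C h^{\multi{\gamma}}$ once we fix, say, $\varepsilon = h$ (or any fixed small $\varepsilon$, absorbing constants). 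On the thin shell near a face one only has the crude bound $|m_{\alpha,h}|\leq 1$ together with derivative bounds; this is exactly why the regions $R_{i\pm}$ were introduced. For $\xi$ in such a shell that also lies in some $R_j$ with $|j|=1$, Corollary \ref{Dm_est3} supplies $|D^\gamma m_{\alpha,h}(\xi)|\leq C h^{\multi{\gamma}} e^{-\varepsilon\pi/(\sqrt d h)}$; integrating over a set of volume $O(h^{-d})$ again yields $\leq C h^{\multi{\gamma}}$. Regions II and III are symmetric: region II consists of $2d$ translated cubes indexed by $|j|=1$, each of which, after the substitution $\xi = 2\pi j/h + r$, reduces to an estimate over (a refinement of) the central cube via the multiplicativity $m_{\alpha,h}(\xi) = a_j(r) m_{\alpha,h}(r)$ used in the proofs of Corollaries \ref{Dm_est2} and \ref{Dm_est3}; integrating the bound from Corollary \ref{Dm_est3} over each gives $\leq C h^{\multi{\gamma}}$, and there are only finitely many ($2d$) such cubes. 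For region III, Corollary \ref{Dm_est2} gives $|D^\gamma m_{\alpha,h}(\xi)|\leq C h^{\multi{\gamma}} e^{-2\pi|j|/(3dh)}$ on the cube $2\pi j/h + [-\pi/h,\pi/h]^d$ (volume $(2\pi/h)^d$), so summing over all $|j|>1$ produces
\[
\sum_{|j|>1} C h^{\multi{\gamma}-d} e^{-2\pi|j|/(3dh)} \leq C h^{\multi{\gamma}-d} \sum_{|j|>1} e^{-2\pi|j|/(3dh)},
\]
and the geometric-type tail sum is, for $0<h\leq 1$, bounded by $C h^{d}$ (indeed it decays faster than any power of $h$), so region III contributes $\leq C h^{\multi{\gamma}}$. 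Adding the three (finitely many) pieces gives the claimed bound.

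The main obstacle, as flagged in the text, is the behavior near the faces of the cubes, where no good pointwise control on $\widehat{L_{\alpha,c}}$ or its derivatives is available beyond $|\widehat{L_{\alpha,c}}|\leq 1$; the whole point of the regions $R_{i\pm}$ and of keeping the parameter $\varepsilon$ in Corollaries \ref{Dm_est1} and \ref{Dm_est3} is to ensure that the boundary layers are covered by sets on which a bound of the form $C h^{\multi{\gamma}} e^{-\varepsilon\pi/(\sqrt d h)}$ holds, so that integrating against a volume factor $h^{-d}$ is harmless. One must check that the $R_{i\pm}$ (over all $i$ and all sign choices), together with the deep interior $\|\xi\|_\infty \leq (1-\varepsilon)\pi/h$ and the cubes in III, actually cover $\mathbb{R}^d$ — this is where choosing the $R_{i\pm}$ to overlap the neighboring translated cubes (extending to $3\pi/h$ in the $i$-th coordinate) is essential, and it is the only genuinely combinatorial bookkeeping in the argument. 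A minor secondary point is to confirm that the constants coming out of the corollaries are uniform in $\varepsilon\in[0,1)$ (asserted in Lemma \ref{LEM_Daj_ests}) so that the final choice of $\varepsilon$ does not reintroduce $h$-dependence.
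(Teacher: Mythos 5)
Your outline is the paper's own proof: the paper likewise integrates the pointwise bounds of Corollaries \ref{Dm_est1}, \ref{Dm_est2} and \ref{Dm_est3} over the central cube, the $2d$ cubes with $|j|=1$ (via the regions $R_{i\pm}$), and the cubes with $|j|>1$, and then sums the resulting series $\sum_{j}h^{\multi{\gamma}-d}e^{-m|j|}$ with $m=\min\{\eps\pi/(\sqrt{d}h),\,2\pi/(3dh)\}$. One correction before the main point: the parenthetical choice $\eps=h$ does not work. With $\eps=h$ the factor $e^{-\eps\pi/(\sqrt{d}h)}=e^{-\pi/\sqrt{d}}$ is a constant and the central cube contributes only $Ch^{\multi{\gamma}-d}$. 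You must keep $\eps>0$ fixed, independent of $h$, so that $e^{-\eps\pi/(\sqrt{d}h)}$ decays faster than any power of $h$ and absorbs the volume factor $h^{-d}$; the uniformity in $\eps$ of the constants asserted in Lemma \ref{LEM_Daj_ests} is exactly what legitimizes a fixed choice of $\eps$, as you note at the end.

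The serious issue is the one you flag as ``the main obstacle'' and then delegate entirely to Corollary \ref{Dm_est3}: the boundary layers $(1-\eps)\pi/h\le|\xi_i|\le(1+\eps)\pi/h$. The sets $R_{i\pm}$ do contain these layers, so the covering of $\R^d$ is formally fine, but the asserted pointwise bound is not actually valid there (and the proof of Corollary \ref{Dm_est3} only treats $\xi_1\in[(1+\eps)\pi/h,3\pi/h]$, never the layer itself). At $\xi=(\pi/h)e_1$ one has $a_{-e_1}(\xi)=1$ while the exponent $|\xi|-|\xi-\frac{2\pi e_1}{h}|$ has $\xi_1$--derivative equal to $2$ there, so $|D_1a_{-e_1}(\xi)|$ and hence $|D_1m_{\alpha,h}(\xi)|$ are of order $1$ on an $O(1)$--width layer along each face --- not $O(h\,e^{-\eps\pi/(\sqrt{d}h)})$. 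This is fatal to the conclusion, not merely to the argument: already for $d=1$, since $m_{\alpha,h}(\xi)\to1$ as $\xi\to0$ while $m_{\alpha,h}(\pi/h)\le1/2$, one has $\|m_{\alpha,h}'\|_{L_1(\R)}\ge\int_0^{\pi/h}|m_{\alpha,h}'|\ge1/2$, which is incompatible with a bound $Ch$; in general the face layers contribute on the order of $h^{-(d-1)}$ to $\|D^\gamma m_{\alpha,h}\|_{L_1}$. This defect is inherited from the paper (whose one-line proof has the same gap, and whose displayed sum has $j=0$ term $h^{\multi{\gamma}-d}$ in any case), so it is not of your making; but the step ``integrate the bound of Corollary \ref{Dm_est3} over the face layer'' is precisely the step that fails, and any correct version of the estimate must either excise those layers or accept a weaker power of $h$.
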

\begin{proof}
For $1\leq \multi{\gamma} \leq 2\alpha+d$, by combining estimates from Corollaries \ref{Dm_est1}, \ref{Dm_est2}, \ref{Dm_est3}, and letting $m:=\min\left\{\dfrac{\varepsilon\pi}{\sqrt{d}h},\dfrac{2\pi}{3dh} \right\} $, we have
\[
\| D^\gamma m_{\alpha,h}  \|_{L_1(\mathbb{R}^d)}\leq C\sum_{j\in\Z^d} h^{\multi{\gamma}-d}e^{-m|j|}.
\]
Estimating this sum via the integral definition of the Gamma function yields the result.
\end{proof}

The corresponding result for sufficiently negative $\alpha$ is the following (the analogues of the univariate estimates from \cite{HammLedford} follow by replacing $\alpha$ with $|\alpha|-1$, as noted in Eq. (44) therein).
\begin{theorem}\label{-DmL1}
Suppose that $\alpha<-d-1/2$, $0<h\leq 1$ and $1\leq \multi{\gamma}<2|\alpha|-d$, then there exists a constant $C>0$, independent of $h$, such that
\[
\| D^\gamma m_{\alpha,h}  \|_{L_1(\mathbb{R}^d)}\leq Ch^{\multi{\gamma}}.
\]
\end{theorem}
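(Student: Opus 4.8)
The plan is to follow the proof of Theorem~\ref{DmL1} essentially verbatim, replacing the positive-exponent univariate input estimates by their negative-exponent analogues and tracking the resulting change in the admissible range of $\multi{\gamma}$. The point is that every structural ingredient of Subsection~\ref{SECmestimates} is insensitive to the sign of $\alpha$: the chain-rule expansion \eqref{chain} and the pointwise bound \eqref{est1} (and their analogues for $1/\widehat{\phi_\alpha}$) use only the radial form $\phi_{\alpha,c}(x)=\varphi_{\alpha,c}(|x|)$ together with the chain and Leibniz rules; the splitting of $\R^d$ into the regions I, II, III and the refined face regions $R_j$ is purely geometric; and the representation $m_{\alpha,h}=(1+s)^{-1}$ with $s=\sum_{j\neq0}a_j\geq0$ remains valid because, for $\alpha<-d-1/2$, one may write (using $K_\nu=K_{-\nu}$ and $\alpha+d/2<0$) $\widehat{\varphi_{\alpha,1}}(r)=\tfrac{2^{1+\alpha}}{\Gamma(-\alpha)}\,r^{|\alpha|-d/2}K_{|\alpha|-d/2}(r)$, which is strictly positive, bounded near the origin, and decays like $r^{|\alpha|-(d+1)/2}e^{-r}$ as $r\to\infty$, with reciprocal comparable to a power of $r$ times $e^{r}$. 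Moreover, the small-$r$ expansion $r^\nu K_\nu(r)=[\text{smooth, even}]-c\,r^{2\nu}[1+O(r^2)]$ (here $\nu=|\alpha|-d/2$) shows that $\widehat{\phi_{\alpha,1}}$ and $1/\widehat{\phi_{\alpha,1}}$ are of class $C^{\lfloor2|\alpha|-d\rfloor}$ at the origin, which is exactly what permits differentiating $m_{\alpha,h}$ up to the orders $\multi{\gamma}<2|\alpha|-d$ under consideration.

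The only genuinely new step is the passage from \eqref{est1} to negative-exponent analogues of \eqref{est2} and \eqref{est5}. For this I would invoke the univariate estimates of Section~7 of \cite{HammLedford}, which (as recorded in Eq.~(44) there) transfer to negative exponents upon replacing $\alpha$ by $|\alpha|-1$; combining this with the dimensional substitution $\alpha\mapsto\alpha+(d-1)/2$ and $c=1$, exactly as in the positive case treated above, gives bounds of the form $|D^\gamma\widehat{\phi_\alpha}(\xi)|\leq C(1+|\xi|)^{s_1}e^{-|\xi|}$ and $|D^\gamma(1/\widehat{\phi_\alpha})(\xi)|\leq C(1+|\xi|)^{s_2}e^{|\xi|}$ for $1\leq\multi{\gamma}<2|\alpha|-d$, with $s_1,s_2$ depending only on $\alpha,d,\multi{\gamma}$. (In the integer-$\alpha$ case a logarithmic factor intrudes near the borderline order, as in Theorem~\ref{L_coeff_bnd}, but this is harmless since the range here is strict.) Because $\multi{\gamma}<2|\alpha|-d$ is a strict inequality, there is no borderline order $\multi{\gamma}=2|\alpha|-d$ at which a logarithm forces a separate treatment, so \eqref{est3} and \eqref{est6} have no counterpart to worry about---a simplification relative to Theorem~\ref{DmL1}.

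With those replacements in place, the remainder of the argument is rerun unchanged. Feeding the new bounds into the Leibniz expansion of $a_j(\xi)=\widehat{\phi_\alpha}(|\xi+\tfrac{2\pi j}{h}|)/\widehat{\phi_\alpha}(|\xi|)$ and using the identity \eqref{exponent} for $|\xi|-|\xi+\tfrac{2\pi j}{h}|$ exactly as in the proof of Lemma~\ref{LEM_Daj_ests}, one obtains on the central cube $\|\xi\|_\infty\leq(1-\varepsilon)\pi/h$ the bound $|D^\gamma a_j(\xi)|\leq Ch^{\multi{\gamma}}e^{-\varepsilon\pi/(\sqrt{d}h)}$ when $|j|=1$, and $|D^\gamma a_j(\xi)|\leq Ch^{\multi{\gamma}}e^{-\varepsilon\pi/(\sqrt{d}h)}e^{-2\pi|j|/(3dh)}$ when $|j|>1$. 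The only difference from the positive case is that the algebraic factors $(1+|\xi|)^{s_2}(1+|\xi+\tfrac{2\pi j}{h}|)^{s_1}$ are now bounded by $C(1+|j|)^{N}h^{-N'}$ for fixed $N,N'$ rather than collapsing directly to $h^{\multi{\gamma}}$; this is immaterial, since $h^{-N'}e^{-\varepsilon\pi/(\sqrt{d}h)}\leq C_{N'}h^{\multi{\gamma}}$ for $h\in(0,1]$ (any power of $h^{-1}$ is dominated by the exponential gain) and the factor $(1+|j|)^N$ is a constant when $|j|=1$ and is absorbed into $e^{-2\pi|j|/(3dh)}$ when $|j|>1$. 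The analogues of Corollaries~\ref{Dm_est1}, \ref{Dm_est2}, and \ref{Dm_est3} then follow by the same applications of the Leibniz rule on the three regions, and summing over $j\in\Z^d$ gives $\|D^\gamma m_{\alpha,h}\|_{L_1}\leq C\sum_{j\in\Z^d}h^{\multi{\gamma}-d}e^{-m|j|}$ with $m=\min\{\varepsilon\pi/(\sqrt{d}h),2\pi/(3dh)\}$, which the Gamma integral $\int_0^\infty r^{d-1}e^{-mr}\,dr=\Gamma(d)m^{-d}\asymp h^{d}$ reduces to $\|D^\gamma m_{\alpha,h}\|_{L_1}\leq Ch^{\multi{\gamma}}$.

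The main obstacle is the verification in the second paragraph: one must confirm that the Section~7 estimates of \cite{HammLedford} really do transfer under $\alpha\mapsto|\alpha|-1$ to bounds of the stated product-of-power-times-exponential shape, with the exponential factors $e^{\mp|\xi|}$ intact (these are what drive both the $j$-decay and the $h^{\multi{\gamma}}$ scaling) and with the polynomial exponents controlled solely in terms of $\alpha,d,\multi{\gamma}$. Once that is in hand, everything downstream is a formal repetition of the positive-$\alpha$ arguments, and the write-up need only flag the places where the two cases genuinely diverge---the small-$r$ expansion above, the substitution for \eqref{est2} and \eqref{est5}, and the new range of $\multi{\gamma}$.
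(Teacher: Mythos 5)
Your proposal is correct and follows essentially the same route as the paper, which in fact offers no separate argument for this theorem beyond the parenthetical remark that the univariate estimates of \cite{HammLedford} transfer under $\alpha\mapsto|\alpha|-1$ and that the proof of Theorem \ref{DmL1} then goes through. Your additional observations --- the positivity and small-$r$ regularity of $r^{\nu}K_{\nu}(r)$ with $\nu=|\alpha|-d/2$, the fact that the strict range $\multi{\gamma}<2|\alpha|-d$ sidesteps the borderline logarithm, and the absorption of the residual powers of $h^{-1}$ into the exponential factors $e^{-\varepsilon\pi/(\sqrt{d}h)}$ (for $|j|=1$, with $\varepsilon>0$) and $e^{-2\pi|j|/(3dh)}$ (for $|j|>1$) --- are exactly the points the paper leaves implicit, and they are handled correctly.
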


Theorems \ref{DmL1} and \ref{-DmL1} together with \eqref{EQmultiplierinverseFTdef} and the fact that $|L_{\alpha,c}(x)|\leq1$ provide the following estimates for $m_{\alpha,h}^\vee$.

\begin{corollary}\label{CORmultiplierinversebounds}
Suppose $\alpha\in(-\infty,-d-1/2)\cup[1/2,\infty)\setminus\N$ and $0<h\leq1$.  There exists a constant $C>0$, independent of $h$, such that
$$|m_{\alpha,h}^\vee(x)|\leq C\min\left\{\frac{1}{h^d},\frac{1}{|x|^d},\frac{1}{|x|^{d+1}}\right\}.$$
%%%%%%%%%%%%%%% Poisson kernel -commented out %%%%%%%%%%%
%If $\alpha=-(d+1)/2$, then there is a constant $C_d$ such that
%$$|m_{\alpha,h}^\vee(x)|\leq C_d\min\left\{\frac{1}{h^d},\frac{1}{|x|^{d}},\frac{1}{h^d|x|^{d+1}}\right\}.$$
%%%%%%%%%%%%%%%%%%%%%%%%%%%%%%%
%If $\alpha\in(-\infty,-d-1/2)$, then there is a constant $C$ such that
%$$|m_{\alpha,h}^\vee(x)|\leq C \min\left\{\frac{1}{h^d},\frac{1}{|x|^{d}},\frac{1}{|x|^{d+1}}\right\}.$$
\end{corollary}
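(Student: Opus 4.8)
The plan is to establish each of the three quantities inside the minimum separately, using only the two ingredients already in place: first, the identity $m_{\alpha,h}^\vee(x)=h^{-d}L_{\alpha,1/h}(x/h)$ of \eqref{EQmultiplierinverseFTdef} together with the transparent bound $|L_{\alpha,c}|\le1$; and second, the $L_1$ bounds on the derivatives of the multiplier from Theorems \ref{DmL1} and \ref{-DmL1}. The first ingredient gives $|m_{\alpha,h}^\vee(x)|\le h^{-d}$ for every $x$, which is the term $1/h^d$.

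For the remaining two terms I would invoke the standard principle that $L_1$-control of a derivative of $m_{\alpha,h}$ converts into polynomial decay of $m_{\alpha,h}^\vee$. Since $m_{\alpha,h}$ is a dilate of $\widehat{L_{\alpha,1/h}}\in L_1$, and since $D^\gamma m_{\alpha,h}\in L_1$ for the relevant $\gamma$ by Theorems \ref{DmL1}--\ref{-DmL1}, the identity $(D^\gamma m_{\alpha,h})^\vee(x)=(-ix)^\gamma m_{\alpha,h}^\vee(x)$ holds pointwise (both sides being continuous functions that agree as tempered distributions), so that
\[
|x^\gamma|\,|m_{\alpha,h}^\vee(x)|\le\frac{1}{(2\pi)^d}\|D^\gamma m_{\alpha,h}\|_{L_1(\R^d)}\le C\,h^{\multi{\gamma}},
\]
with $C$ independent of $h$. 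Fixing $x\neq0$, I would choose the coordinate index $i$ realizing $|x_i|=\|x\|_\infty$, so that $|x_i|\ge|x|/\sqrt d$ by \eqref{norm equiv}, and then apply the display to the multi-indices $\gamma=de_i$ and $\gamma=(d+1)e_i$, i.e.\ with all weight in the $i$-th slot. These yield $|m_{\alpha,h}^\vee(x)|\le C\,d^{d/2}h^d|x|^{-d}$ and $|m_{\alpha,h}^\vee(x)|\le C\,d^{(d+1)/2}h^{d+1}|x|^{-d-1}$; since $0<h\le1$ we have $h^d\le1$ and $h^{d+1}\le1$, so these reduce to the bounds $C|x|^{-d}$ and $C|x|^{-d-1}$. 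Taking the minimum of the three estimates completes the proof.

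The step I would single out as the main thing to check — though it is really bookkeeping, the analysis proper being buried in Theorems \ref{DmL1} and \ref{-DmL1} — is that the multi-indices of length $d$ and $d+1$ lie inside the ranges those theorems permit. For $\alpha\ge1/2$ this requires $d+1\le2\alpha+d$, i.e.\ $\alpha\ge1/2$; for $\alpha<-d-1/2$ it requires $d+1<2|\alpha|-d$, i.e.\ $\alpha<-d-1/2$; in each case the requirement is exactly the standing hypothesis on $\alpha$, and the weaker condition needed for length $d$ is then automatic. The only other minor point is the justification of the integration-by-parts identity above, which follows from $m_{\alpha,h}\in L_1$ and $D^\gamma m_{\alpha,h}\in L_1$; no genuine obstacle beyond this arises.
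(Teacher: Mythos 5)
Your argument is correct and uses exactly the ingredients the paper itself cites for this corollary (the identity \eqref{EQmultiplierinverseFTdef} with $|L_{\alpha,c}|\le 1$ for the $h^{-d}$ bound, and the $L_1$ derivative estimates of Theorems \ref{DmL1} and \ref{-DmL1} converted into decay of $m_{\alpha,h}^\vee$ via $(D^\gamma m_{\alpha,h})^\vee(x)=(-ix)^\gamma m_{\alpha,h}^\vee(x)$), filling in the standard details the paper leaves implicit. Your verification that the multi-indices of length $d$ and $d+1$ fall within the admissible ranges of those theorems is exactly the bookkeeping that matches the standing hypotheses on $\alpha$, so nothing further is needed.
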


%%%%%%%%%%%%%%%%%%%%%%%%%%%%%%%%%%%%%%%%%%%%%%%%%%%%%%%%%%%%%%%%%%%%%%%%%%%%%%%
%%%%%%%%%%%%%%%%%%%%%%%%%%%%%%%%%%%%%%%%%%%%%%%%%%%%%%%%%%%%%%%%%%%%%%%%%%%%%%%
%%%             Multiplier norm of mah                              %%%%%%%%%%%
%%%%%%%%%%%%%%%%%%%%%%%%%%%%%%%%%%%%%%%%%%%%%%%%%%%%%%%%%%%%%%%%%%%%%%%%%%%%%%%
%%%%%%%%%%%%%%%%%%%%%%%%%%%%%%%%%%%%%%%%%%%%%%%%%%%%%%%%%%%%%%%%%%%%%%%%%%%%%%%

\subsection{The Multiplier Norm of $m_{\alpha,h}$}

We now consider the behavior of $\|m_{\alpha,h}\|_{\Mp}$ for different values of $p$. 
This will aid the analysis in proving the stability of the interpolation operator $I^h_\alpha$.  

\begin{theorem}\label{THMmultiplierbounded}
 Suppose that $\alpha\in (-\infty,-d-1/2)\cup [1/2,\infty)\setminus \mathbb{N} $ and $0<h\leq1$.  Then for each $1<p<\infty$, there exists a constant $C>0$ such that
 $$\|m_{\alpha,h}\|_{\mathcal{M}_p}\leq C.$$
 If $p=1,\infty$, then there exists a constant $C>0$ such that $$\|m_{\alpha,h}\|_{\Mp}\leq C\left(1+|\ln h|\right).$$
\end{theorem}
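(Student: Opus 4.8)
The plan is to get the endpoint bounds $p=1,\infty$ straight from the pointwise estimate in Corollary~\ref{CORmultiplierinversebounds}, and the bounds for $1<p<\infty$ from the classical Mikhlin--H\"ormander multiplier theorem, the real work being to check its hypotheses with constants independent of $h$. By \eqref{EQmultiplieronenormbound} (which holds for every $p$), $\|\mah\|_{\mathcal{M}_p}\le\|\mah^\vee\|_{L_1}$, so for $p=1,\infty$ it suffices to estimate $\|\mah^\vee\|_{L_1}$. Splitting $\R^d$ into $|x|\le h$, $h\le|x|\le 1$, and $|x|>1$ and inserting $|\mah^\vee(x)|\le C\min\{h^{-d},|x|^{-d},|x|^{-d-1}\}$, the outer two zones contribute $O(1)$ while the middle zone contributes $C\int_h^1 r^{-1}\,dr=C|\ln h|$ (here $0<h\le1$ is used), so $\|\mah^\vee\|_{L_1}\le C(1+|\ln h|)$, as claimed.

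For $1<p<\infty$ I would appeal to the Mikhlin--H\"ormander theorem: it suffices to produce a constant $C$, independent of $h$, with $|\mah(\xi)|\le C$ and $|\xi|^{\multi{\gamma}}\,|D^\gamma\mah(\xi)|\le C$ for all $\xi\ne0$ and all multi-indices $\gamma$ with $1\le\multi{\gamma}\le\floor{d/2}+1$. The first bound is just $|\widehat{L_{\alpha,c}}|\le 1$; for the second, note that under the standing hypothesis on $\alpha$ this range of $\multi{\gamma}$ lies in $[1,2\alpha+d]$ (if $\alpha\ge 1/2$) or in $[1,2|\alpha|-d)$ (if $\alpha<-d-1/2$), so the region estimates of Section~\ref{SECmestimates} apply. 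The key observation is that those estimates, which carry a free parameter $\varepsilon\in[0,1)$ and an exponential factor, deliver the scale-invariant bound $|\xi|^{-\multi{\gamma}}$ once $\varepsilon$ is chosen in terms of $\xi$. For $0<\|\xi\|_\infty=:t\le\pi/h$, Corollary~\ref{Dm_est1} with $\varepsilon=1-th/\pi\in[0,1)$ gives $|D^\gamma\mah(\xi)|\le Ch^{\multi{\gamma}}e^{-(\pi-th)/(\sqrt d\,h)}$, so, since $|\xi|\le\sqrt d\,t$ and $0\le th\le\pi$,
\[
|\xi|^{\multi{\gamma}}\,|D^\gamma\mah(\xi)|\le Cd^{\multi{\gamma}/2}(th)^{\multi{\gamma}}e^{-(\pi-th)/(\sqrt d\,h)}\le Cd^{\multi{\gamma}/2}\pi^{\multi{\gamma}},
\]
uniformly in $h$ and $\xi$. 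For $\|\xi\|_\infty>\pi/h$ we have $\xi\in 2\pi j/h+[-\pi/h,\pi/h]^d$ for some $j\ne 0$, with $|\xi|\le C_d|j|/h$; Corollary~\ref{Dm_est2} (when $|j|>1$) and Corollary~\ref{Dm_est3} on the collar $R_j\supseteq 2\pi j/h+[-\pi/h,\pi/h]^d$ with, say, $\varepsilon=\tfrac12$ (when $|j|=1$) give $|D^\gamma\mah(\xi)|\le Ch^{\multi{\gamma}}e^{-c|j|/h}$, hence with $v:=|j|/h\ge 1$ and $h\le 1$,
\[
|\xi|^{\multi{\gamma}}\,|D^\gamma\mah(\xi)|\le C\,v^{\multi{\gamma}}e^{-cv}\le C\sup_{v\ge 1}v^{\multi{\gamma}}e^{-cv}<\infty,
\]
again uniformly in $h$. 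Since region~I, the collars $R_j$, and the cubes $2\pi j/h+[-\pi/h,\pi/h]^d$ with $|j|>1$ exhaust $\R^d\setminus\{0\}$, the Mikhlin condition holds with an $h$-independent constant and the theorem gives $\|\mah\|_{\mathcal{M}_p}\le C_{p,d}$. For $\alpha<-d-1/2$ one uses the analogues of Corollaries~\ref{Dm_est1}--\ref{Dm_est3}, obtained as in Theorem~\ref{-DmL1} by replacing $\alpha$ with $|\alpha|-1$.

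The step I expect to be the main obstacle is precisely this last conversion near the faces of the cubes $2\pi j/h+[-\pi/h,\pi/h]^d$, where the only universally valid estimate is the trivial $|\widehat{L_{\alpha,c}}|\le 1$: the crude region~I bound with $\varepsilon=0$, namely $|D^\gamma\mah|\le Ch^{\multi{\gamma}}$, is only $O(1)$ after multiplying by $|\xi|^{\multi{\gamma}}\sim(\pi/h)^{\multi{\gamma}}$, so it must be interpolated against the exponentially small interior estimate --- which is exactly what the $\xi$-dependent choice $\varepsilon=1-th/\pi$ and the separate collar estimate Corollary~\ref{Dm_est3} achieve. A viable alternative, bypassing pointwise derivative bounds, is to combine the $L_2$ bound $\|\mah\|_{\mathcal{M}_2}\le 1$ (immediate from $|\widehat{L_{\alpha,c}}|\le 1$) with the H\"ormander regularity condition $\sup_{y\ne 0}\int_{|x|>2|y|}|\mah^\vee(x-y)-\mah^\vee(x)|\,dx\le C$ uniformly in $h$, which yields $L_p$-boundedness for $1<p<2$ by Calder\'on--Zygmund theory and then for $2<p<\infty$ by duality; this condition would in turn follow from a gradient bound $|\nabla\mah^\vee(x)|\le C\min\{h^{-d-1},|x|^{-d-1}\}$, obtainable from $L_1$-estimates on $\xi_j D^\gamma\mah$ by rerunning the region analysis of Section~\ref{SECmestimates}.
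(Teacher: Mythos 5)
Your proposal is correct and follows essentially the same route as the paper: Young's inequality together with the pointwise bound of Corollary \ref{CORmultiplierinversebounds} for $p=1,\infty$, and the Mikhlin multiplier theorem for $1<p<\infty$. The only difference is that the paper asserts the Mikhlin hypothesis $|\xi|^{\multi{\gamma}}|D^\gamma \mah(\xi)|\leq C$ "follows directly" from Section \ref{SECmestimates}, whereas you actually verify it --- your $\xi$-dependent choice $\eps=1-\|\xi\|_\infty h/\pi$ in Corollary \ref{Dm_est1} (legitimate since the constants there are uniform in $\eps$) is precisely the missing detail, and your verification is sound.
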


\begin{proof}
  First, notice that $\|m_{\alpha,h}\|_{\mathcal{M}_1} = \|m_{\alpha,h}\|_{\mathcal{M}_\infty}\leq \|m_{\alpha,h}^\vee\|_{L_1(\R^d)}$.
    By Corollary \ref{CORmultiplierinversebounds}, the stated upper bound for $\| m_{\alpha,h}^\vee  \|_{L_1(\mathbb{R}^d)}$ is obtained as follows:
\begin{align*}
  &C\left[\int_{0}^{h} h^{-d}r^{d-1}dr + \int_{h}^{1} r^{-1}dr +   \int_{1}^{\infty}  r^{-2}dr   \right]\\
   \leq & C\left( 1  + |\ln(h)|       \right).
\end{align*}
 
 If $1<p<\infty$, the conclusion of the theorem follows directly from the Mikhlin multiplier theorem \cite[Theorem 2, p. 232]{Mikhlin}, which states that if $\underset{x\in\R^d}\sup|x|^{[\gamma]}|D^\gamma m_{\alpha,h}(x)|\leq C$ for every $[\gamma]\leq d$, then $\|m_{\alpha,h}\|_{\mathcal{M}_p}\leq C$.  This bound follows directly from the estimates in the previous subsections.
\end{proof}

%%%%%%%%%%%%%%%%%%%%
%%%%%%%%%%%%%%%%%%%%
%%% Stability Section
%%%%%%%%%%%%%%%%%%%%
%%%%%%%%%%%%%%%%%%%%

\section{Proofs of Main Theorems}\label{SECProofs}

In this section, we enumerate the proofs of the main theorems in Section \ref{SECMain}, excepting Theorem \ref{THMmaintheorem} which was already proven there.

\subsection{Proofs of Structural Theorems}

\begin{proof}[Proof of Theorem \ref{L_coeff_bnd}]
Let $c>0$ be fixed, and we drop the subscript for ease of notation.  Begin by defining the periodic symbol
\[
P_\alpha(\xi):=(\widehat{\phi_\alpha}(\xi))^{-1}\widehat{L_{\alpha}}(\xi)=\left(\sum_{j\in\Z^d} \widehat{\phi_\alpha}(\xi+2\pi j)  \right)^{-1}.
\]
From \eqref{est5}, \eqref{est6}, and Theorem \ref{-DmL1}, we see that $P_\alpha$ has derivatives of up to order $k$ in $L_1(\T^d)$, where $k<2|\alpha|-d$, and $\T^d$ is the $d$--dimensional torus, which may be identified with $[-\pi,\pi)^d$.  Since $P_\alpha\in L_1(\T^d)$, it may be identified with its Fourier series, and we have $\widehat{L_\alpha}(\xi)=\widehat{\phi_\alpha}(\xi)\sum_{j\in\Z^d}a_je^{-i\bracket{j,\xi}},$ where
\[
a_j=\frac{1}{(2\pi)^d}\int_{\T^d}P_\alpha(\xi)e^{i\langle \xi,j\rangle}d\xi.
\]
The series representation of $L_\alpha$ is immediate, and we can use integration by parts and the periodicity of $P_\alpha$ to see the decay rates of the coefficients $a_j$.  These estimates together with the decay of $\phi_\alpha$ allow us to conclude that the series is uniformly convergent. 
\end{proof}

The following lemma will be useful for the proof of Theorem \ref{InterSpace}.  Before stating it, let us note that $A(\T^d)$ is the Wiener algebra of $L_1(\T^d)$ functions with absolutely summable Fourier coefficients.

\begin{lemma}\label{LEMAT}
Let $c>0$ and $\alpha<-d-1/2$ be fixed.  Let $P_\alpha$ be the function defined in the proof of Theorem \ref{L_coeff_bnd}.  Then $P_\alpha$ and $1/P_\alpha$ are in $A(\T^d)$.
\end{lemma}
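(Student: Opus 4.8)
The plan is to show that $P_\alpha \in A(\T^d)$ and that $1/P_\alpha \in A(\T^d)$ by combining the decay estimates already established with Wiener's classical $1/f$ theorem (Wiener's lemma). First I would observe that membership of $P_\alpha$ in $A(\T^d)$ is essentially contained in the proof of Theorem~\ref{L_coeff_bnd}: there the Fourier coefficients $a_j$ of $P_\alpha$ were shown, via integration by parts exploiting the $L_1(\T^d)$ regularity of derivatives of $P_\alpha$ up to order $k < 2|\alpha|-d$, to satisfy a decay rate of the form $|a_j| = O(|j|^{-\lfloor 2|\alpha|-d\rfloor})$ (or $O(|j|^{-2|\alpha|+d+1})$ in the integer case). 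Since $\alpha < -d-1/2$ forces $2|\alpha|-d > d+1$, the exponent exceeds $d$, so $(a_j) \in \ell_1(\Z^d)$ and hence $P_\alpha \in A(\T^d)$. Equivalently, one can note $P_\alpha = \widehat{\phi_\alpha}^{-1}\widehat{L_\alpha}$ and appeal to Theorem~\ref{L_coeff_bnd} directly, which already exhibits $a \in \ell_1$.

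Next I would turn to $1/P_\alpha$. By definition $1/P_\alpha(\xi) = \sum_{j\in\Z^d}\widehat{\phi_\alpha}(\xi+2\pi j)$, the periodization of the (generalized) Fourier transform of $\phi_\alpha$. The key analytic input is that $\widehat{\phi_\alpha}$, being (up to constants) $(c/|\xi|)^{\alpha+d/2}K_{\alpha+d/2}(c|\xi|)$, has an algebraic singularity at the origin of order $|\xi|^{2\alpha+d}$ and decays exponentially at infinity; since $\alpha < -d-1/2$ we have $2\alpha + d < -1 < 0$, so this is an integrable singularity but $\widehat{\phi_\alpha}$ is not itself continuous. I would instead control the Fourier coefficients of $1/P_\alpha$, namely $\phi_\alpha(j)$ for $j \in \Z^d$ (by Poisson summation / the standard identity that the Fourier coefficients of a periodization are the samples of the original function on the dual lattice). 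But $\phi_\alpha(j) = (|j|^2+c^2)^\alpha = O(|j|^{2\alpha})$, and $2\alpha < -2d-1 < -d$, so $(\phi_\alpha(j))_{j\in\Z^d} \in \ell_1(\Z^d)$, giving $1/P_\alpha \in A(\T^d)$ directly.

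With both $P_\alpha$ and $1/P_\alpha$ handled, the proof is essentially complete, but I would double-check one subtlety: the identity "Fourier coefficients of $1/P_\alpha$ equal $\phi_\alpha(j)$" requires care because $\phi_\alpha$ is a tempered distribution of slow growth, not an $L_1$ function, so Poisson summation must be read in the distributional sense. The cleanest route is to avoid this altogether and simply quote Wiener's lemma: $P_\alpha \in A(\T^d)$ and $P_\alpha(\xi) \neq 0$ for every $\xi \in \T^d$ together imply $1/P_\alpha \in A(\T^d)$. Nonvanishing of $P_\alpha$ follows because $1/P_\alpha(\xi) = \sum_j \widehat{\phi_\alpha}(\xi+2\pi j)$ is a sum of strictly positive terms (the generalized Fourier transform of $\phi_\alpha$ for $\alpha<0$ is positive, since $\Gamma(-\alpha)>0$, $2^{1+\alpha}>0$, and $K_\nu>0$), hence $1/P_\alpha > 0$ everywhere, so $P_\alpha$ is bounded away from zero on the compact torus. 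The main obstacle, then, is not any hard estimate but rather marshalling the precise positivity and decay facts about $\widehat{\phi_\alpha}$ in the regime $\alpha < -d-1/2$ and ensuring the $\ell_1$-summability threshold is genuinely met; all of these are already available from the Bessel-function description of $\widehat{\phi_\alpha}$ and from Theorem~\ref{L_coeff_bnd}.
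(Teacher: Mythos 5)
Your overall strategy coincides with the paper's: $P_\alpha\in A(\T^d)$ is read off from Theorem \ref{L_coeff_bnd} (its Fourier coefficients $a_j$ are already shown there to lie in $\ell_1$), and $1/P_\alpha\in A(\T^d)$ is then deduced from Wiener's $1/f$ theorem once $P_\alpha$ is shown to be nonvanishing. The gap is in the nonvanishing step. You argue that $1/P_\alpha(\xi)=\sum_{j\in\Z^d}\widehat{\phi_{\alpha,c}}(\xi+2\pi j)$ is a sum of strictly positive terms, hence positive, and conclude that $P_\alpha$ is bounded away from zero on the compact torus. But positivity of each term does not prevent the periodized sum from being unbounded, or even $+\infty$ at some point, in which case $P_\alpha$ would vanish there; a positive function on a compact set need not be bounded away from zero without continuity and finiteness. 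What is actually required --- and what constitutes essentially the entire content of the paper's proof of this lemma --- is a uniform upper bound $\sum_{j\in\Z^d}\vert\widehat{\phi_{\alpha,c}}(\xi+2\pi j)\vert\leq C$ for $\xi\in\T^d$, which the paper obtains from $\phi_{\alpha,c}\in L_1\cap L_2(\R^d)$ (so that $\widehat{\phi_{\alpha,c}}$ is bounded) together with the exponential decay of $K_\nu$ to control the tail over $j\neq0$.

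This missing bound is also where your misstatement about $\widehat{\phi_{\alpha,c}}$ matters. You assert that it has an algebraic singularity of order $|\xi|^{2\alpha+d}$ at the origin and is not continuous; in fact that singularity occurs for $\alpha>-d/2$, whereas in the present regime $\alpha<-d-1/2$ the function $\phi_{\alpha,c}$ lies in $L_1(\R^d)$ and $\widehat{\phi_{\alpha,c}}$ is bounded and continuous. That continuity, combined with the uniform convergence of the periodization, is exactly what makes $1/P_\alpha$ continuous and hence bounded above on the compact torus, closing the gap. Incidentally, your first, abandoned route --- identifying the Fourier coefficients of $1/P_\alpha$ with the samples $\phi_{\alpha,c}(j)=(|j|^2+c^2)^\alpha=O(|j|^{2\alpha})\in\ell_1(\Z^d)$ via Poisson summation --- is legitimate here, since $\phi_{\alpha,c}$ and $\widehat{\phi_{\alpha,c}}$ are both integrable, continuous, and suitably decaying; it would give $1/P_\alpha\in A(\T^d)$ directly and bypass Wiener's lemma altogether. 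The reason you gave for discarding it (non-continuity of $\widehat{\phi_{\alpha,c}}$) rests on the same incorrect premise.
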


\begin{proof}
Note that Theorem \ref{L_coeff_bnd} and its proof imply that $P_\alpha\in A(\T^d)$. Consequently, if $P_\alpha(\xi)$ is bounded away from 0 on $\T^d$, then $P_\alpha$ satisfies the conditions of Wiener's $1/f$ Theorem, and $1/P_\alpha\in A(\T^d)$.  Thus, it suffices to demonstrate that for fixed $\alpha<-d-1/2$ and $c>0$, $\sum_{j\in\Z^d}|\widehat{\phi_{\alpha,c}}(\xi+2\pi j)|\leq C$, uniformly for $\xi\in\T^d$.

First, note that for this range of $\alpha$, $\phi_{\alpha,c}\in L_1\cap L_2(\R^d)$, which implies that $\vert\widehat{\phi_{\alpha,c}}(\xi)\vert\leq C$ for all $\xi$.  Thus, it suffices to bound $\sum_{j\neq0}|\widehat{\phi_{\alpha,c}}(\xi+2\pi j)|$.  Note that the quantity in question is majorized by $|\widehat{\phi_{\alpha,c}}(\xi)|\sum_{j\neq0}e^{-c(|\xi+2\pi j|-|\xi|)}$ on account of the estimates in \cite[Section 5.1]{Wendland} (see also \cite[Lemma 1]{HammLedford}; however, the rational term appearing there should be replaced by 1 due to a typographical error in the definition of $K_\nu$ therein).  Finally, this series is uniformly bounded for $\xi\in\T^d$ (see, for example, the proof of \cite[Proposition 2.2]{Baxter}), whence the proof is complete.
\end{proof}

\begin{proof}[Proof of Theorem \ref{InterSpace}]
To begin, we show that $V_p(L_{\alpha,c})\subset V_p(\phi_{\alpha,c}).$  On account of Theorem \ref{L_coeff_bnd}, we may write $f\in V_p(L_{\alpha,c}) $ as
\[
f(x)=\sum_{j\in\Z^d}b_j \sum_{k\in\Z^d}a_k\phi_{\alpha,c}(x-j-k),
\]
where $(a_k)\in\ell_1$ are the Fourier coefficients of $P_\alpha$, and $(b_j)\in\ell_p$.

The following calculation justifies the use of Fubini's Theorem, allowing us to switch the order of summation:
\begin{align*}
    &\sum_{j,k\in\Z^d}|b_j a_k||\phi_{\alpha,c}(x-j-k)|\\ 
    &=\sum_{k\in\Z^d}|a_k|\sum_{j\in\Z^d}|b_j||\phi_{\alpha,c}(x-j-k)|\\
    &\leq \| a\|_{\ell_1}\|b \|_{\ell_p} \| (\phi_{\alpha,c}(x-\cdot) ) \|_{\ell_q},
\end{align*}
where we have used H\"{o}lder's inequality.  The last term is bounded uniformly in $x$ on account of the decay of $\phi_{\alpha,c}$ and the fact that $ a \in\ell_1$.  Then we have, by re-indexing,
\begin{align*}
f(x)&=\sum_{k\in\Z^d} \sum_{j\in\Z^d}a_kb_j\phi_{\alpha,c}(x-j-k)\\
&=\sum_{m\in\Z^d}\left(\sum_{j\in\Z^d}b_j a_{m-j}\right)\phi_{\alpha,c}(x-m)\\
&=\sum_{m\in\Z^d}(a*b)_m \phi_{\alpha,c}(x-m).
\end{align*}
Now the discrete version of Young's convolution inequality implies that $b*a\in\ell_p$, hence $f\in V_p(\phi_{\alpha,c})$. The statement about $V_p(L_{\alpha,\frac1h}(\cdot/h),h\Z^d)$ follows from a simple dilation argument.

To see the reverse inclusion $V_p(\phi_{\alpha,c})\subset V_p(L_{\alpha,c})$, let $f(x) = \sum_{j\in\Z^d}b_j\phi_{\alpha,c}(x-j)$, and let $(d_k)\in\ell_1$ be the Fourier coefficients of $1/P_\alpha$ on account of Lemma \ref{LEMAT}.  The same calculation as above verifies that $$f(x) = \sum_{m\in\Z^d}(d\ast b)_mL_{\alpha,c}(x-m),$$ where the only change needed is that, in the justification of the use of Fubini's Theorem, we need $\|(L_{\alpha,c}(x-\cdot))\|_{\ell_q}$ to be finite.  Note that it suffices to bound the $\ell_1$ norm of the sequence in question, whereby on account of Corollary \ref{CORmultiplierinversebounds} and \eqref{EQmultiplierinverseFTdef}, we have
$$\|(L_{\alpha,c}(x-\cdot))\|_{\ell_1} = O\left(\sum_{j\in\Z^d}\frac{1}{1+|x-j|^{d+1}}\right)=O(1),$$ where the implicit constant is independent of $x$.
Note also that the univariate proof of this bound is \cite[Proposition 6]{HammLedford}.

\begin{comment}
If we suppose that $1\leq p\leq 2$, we can say more.  In this case $P_\alpha$ satisfies the conditions of Wiener's ``$1/f$" theorem, so we can write
\[
\phi_{\alpha,c}(x):=\sum_{j\in\Z^d}d_j L_{\alpha,c}(x-j),
\]
where $d\in\ell_1$ are the Fourier coefficients of $1/P_{\alpha}$.  Using completely analogous calculations to those above, we have that if $g\in V_p(\phi_{\alpha,c})$, with coefficient sequence $b\in\ell_p$, then the corresponding coefficient sequence in $V_p(L_{\alpha,c})$ is given by $b*d\in\ell_p$.
\end{comment}
\end{proof}

\subsection{Proof of Theorem \ref{THMstabilinterpolation} - Univariate Case}\label{SECstabilityunivariate}

We may now prove Theorem \ref{THMstabilinterpolation}.  Since the essence of the argument is contained in the univariate proof, we carefully consider the behavior of the interpolation operator $I^h_\alpha$ acting on functions in $E_\frac{\pi+\eps}{h}\cap W_p^k(\R).$ The changes necessary to complete the multivariate version follow in Section \ref{SECstabilitymultivariate}.    

%Specifically we show the following.

We begin with a formula that follows similarly to \eqref{EQIhFourierCalc}:
\begin{equation}\label{EQIhDerivativeFourierCalc}
\widehat{\left(I_\alpha^hf\right)^{(k)}}(\xi) =
(i\xi)^k\left[\zsum{j}\widehat{f}\left(\xi-\frac{2\pi
j}{h}\right)\right]m_{\alpha,h}(\xi) =:\zsum{j}\widehat{G_{k,j}}(\xi).\end{equation}

We define $G_{k,j}:=D^k[(e^{2\pi i(\cdot)j/h}f)\ast m_{\alpha,h}^\vee],$ and notice that
the convolution theorem holds as $f$ is bandlimited (see, for example, \cite[Theorem 8.4.2]{fjoshi}), and so
$\widehat{G_{k,j}}$ as defined by \eqref{EQIhDerivativeFourierCalc} (cf. \eqref{EQTemp}) is indeed
the distributional Fourier transform of $G_{k,j}$.  We will prove that
$\zsum{j}\|G_{k,j}\|_{L_p}<\infty$.  This implies that 
$\left(I_\alpha^hf\right)^{(k)} = \zsum{j}G_{k,j}$ in $\schwartz'$, and 
$|I_\alpha^hf|_{W_p^k}\leq\zsum{j}\|G_{k,j}\|_{L_p}.$  We break these estimates into
three parts: $j=0$, $j=\pm1$, and $|j|>1$.

{\em Case $j=0$}:  By \eqref{EQmultiplierconvolutioninequality}, $\|D^kf\ast
m_{\alpha,h}^\vee\|_{L_p}\leq|f|_{W_p^k}\|m_{\alpha,h}\|_{\Mp}.$ So by Theorem
\ref{THMmultiplierbounded},
$$\|G_{k,0}\|_{L_p}\leq\left\{
\begin{array}{ll}
 C|f|_{W_p^k} & 1<p<\infty\\
 C(1+|\ln h|)|f|_{W_p^k} & p=1,\infty.
\end{array}\right.$$

{\em Case $|j|>1$}:

Let $\nu$ be a non-negative $C^\infty$ bump function such that $\nu(\xi)=0$
whenever $|\xi|>2\varepsilon$, $\nu(\xi)=1$ whenever $|\xi|<\varepsilon$, and
$\nu(-\xi)=\nu(\xi)$ for all $\xi$.  We use $\nu$ to construct another bump function, $\psi$, with support in
$[-\pi-2\varepsilon,\pi+2\varepsilon]$ satisfying $\psi(t) = 1$ for $-\pi\leq
t\leq\pi$, and $\psi(-t)=\psi(t)=\nu(t-\pi)$ for $t\geq\pi$.

Now since $f$ is bandlimited with band $\frac{\pi+\varepsilon}{h}$, we may write

$$\widehat{G_{k,j}}(\xi) =  \widehat{f}\left(\xi-\frac{2\pi
j}{h}\right)(i\xi)^k\psi\left(h\left(\xi-\frac{2\pi
j}{h}\right)\right)m_{\alpha,h}(\xi),$$
because on the support of $\widehat{f}$, $\psi\left(h\left(\xi-\frac{2\pi
j}{h}\right)\right) = 1$.
Now define the function
$$\rho(\xi):=\rho_{k,j}(\xi):=(i\xi)^k\psi\left(h\left(\xi-\frac{2\pi
j}{h}\right)\right)m_{\alpha,h}(\xi).$$
Using \eqref{EQmultiplieronenormbound} and
\eqref{EQmultiplierconvolutioninequality}, we may bound $\|G_{k,j}\|_{L_p}$
above by $\|\rho^\vee\|_{L_1}\left\|\left(\widehat{f}\left(\cdot-\frac{2\pi
j}{h}\right)\right)^\vee\right\|_{L_p}$ by viewing $\rho_{k,j}$ as an $L_p$ multiplier operator.  We now estimate $\|\rho^\vee\|_{L_1}$.

\begin{lemma}\label{LEMtauestimate}
Let $k\in\N$ and $\alpha\in[1/2,\infty)$.  There is a constant $C$ depending on $\eps$ and $k$ such that the following hold: 

\begin{itemize}
\item[(i)] If $|j|\geq3$, then $$\|\rho_{k,j}^\vee\|_{L_1}\leq C\left|\frac{\pi(2j+1)+2\eps}{h}\right|^{k+1}h^2e^{-\frac{2\pi}{3h}(|j|-2)}.$$
\item[(ii)] If $|j|=2$, then
$$\|\rho_{k,j}^\vee\|_{L_1}\leq C\left|\frac{\pi(2j+1)+2\eps}{h}\right|^{k+1}h^2e^{-\frac{\pi}{2h}}.$$
\end{itemize}
\end{lemma}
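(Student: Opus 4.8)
The plan is to reduce the estimate to bounding $\|\rho_{k,j}\|_{L_1}$ and $\|\rho_{k,j}''\|_{L_1}$, and then to apply the elementary one‑dimensional inequality
\[
\|g^\vee\|_{L_1(\mathbb R)}\le\tfrac1\pi\big(\|g\|_{L_1(\mathbb R)}+\|g''\|_{L_1(\mathbb R)}\big),
\]
valid for every $g\in C^2$ with compact support; it follows at once from the trivial bound $|g^\vee(x)|\le\tfrac1{2\pi}\|g\|_{L_1}$, from $|g^\vee(x)|\le\tfrac1{2\pi x^2}\|g''\|_{L_1}$ (integrate by parts twice), and from splitting $\int_{\mathbb R}$ at $|x|=1$. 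To apply it I first note that, since $\alpha\ge1/2$, the estimates \eqref{est2}--\eqref{est6} (equivalently Corollaries \ref{Dm_est2} and \ref{Dm_est3}) show $m_{\alpha,h}$ to have two continuous derivatives off the lattice $\tfrac{2\pi}h\mathbb Z$, so that $\rho_{k,j}$, being the product of $(i\xi)^k$, the $C^\infty$ cutoff $\psi(h(\xi-\tfrac{2\pi j}h))$, and $m_{\alpha,h}$, is $C^2$ with compact support in the interval $J_j:=\tfrac{2\pi j}h+[-\tfrac{\pi+2\eps}h,\tfrac{\pi+2\eps}h]$, on which $|\xi|\le N_j:=\tfrac{\pi(2|j|+1)+2\eps}h$ — a quantity comparable, uniformly in $j$ and $h$, to the $\big|\tfrac{\pi(2j+1)+2\eps}h\big|$ of the statement.

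Both $L_1$ norms are handled by the same recipe. Expand $\rho_{k,j}''$ via the Leibniz rule into terms $D^a[(i\xi)^k]\,D^b[\psi(h(\xi-\tfrac{2\pi j}h))]\,D^c m_{\alpha,h}(\xi)$ with $a+b+c\le2$, and bound each by its supremum over its interval of support times the length of that interval. On $J_j$ one has $|D^a[(i\xi)^k]|\le C_kN_j^k$; each derivative that falls on the cutoff contributes a factor $h$ (with a constant $\|\psi^{(b)}\|_\infty$ depending on $\eps$) and is supported only on the short intervals of length $\asymp\eps/h$ on which $\psi$ is not constant; and $D^c m_{\alpha,h}(\xi)=h^c\,\widehat{L_{\alpha,1/h}}^{(c)}(h\xi)$, whose size is governed by the estimates of Section \ref{SECmestimates}. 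The key point is that $J_j$ meets at most the three period cubes centered at $\tfrac{2\pi j'}h$, $j'\in\{j-1,j,j+1\}$: on those with $|j'|>1$, Corollary \ref{Dm_est2} gives $|D^c m_{\alpha,h}(\xi)|\le Ch^c e^{-2\pi|j'|/(3h)}$, so for $|j|\ge3$ one obtains the factor $e^{-\frac{2\pi}{3h}(|j|-1)}\le e^{-\frac{2\pi}{3h}(|j|-2)}$ uniformly over $J_j$. When $|j|=2$ the only delicate piece is the small overhang of $J_j$ into the cube centered at $\pm\tfrac{2\pi}h$; there Corollary \ref{Dm_est3} is uniform over a region $R_j$ abutting the "bad" face $\pm\tfrac\pi h$ and only yields $e^{-\eps\pi/h}$, so instead one estimates directly, using
\[
|m_{\alpha,h}(\xi)|\le\frac{|\widehat{\varphi_\alpha}(|\xi|)|}{|\widehat{\varphi_\alpha}(\operatorname{dist}(\xi,\tfrac{2\pi}h\mathbb Z))|}
\]
together with the analogous bound for its derivatives via $D^c m_{\alpha,h}=h^c\widehat{L_{\alpha,1/h}}^{(c)}(h\,\cdot)$: this overhang lies near the \emph{outer} face $\pm\tfrac{3\pi}h$ of that cube — at distance $\ge\tfrac{3\pi-2\eps}h$ from the origin but distance $\le\tfrac\pi h$ from $\pm\tfrac{2\pi}h$ — so the exponential decay of $\widehat{\varphi_\alpha}$ forces $|D^c m_{\alpha,h}(\xi)|\le Ch^c h^{-N_0}e^{-2\pi/h}\le Ch^c e^{-\pi/(2h)}$ for $0<h\le1$, where $N_0=N_0(\alpha,d)$.

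Collecting the terms, every Leibniz term — and hence both $\|\rho_{k,j}\|_{L_1}$ and $\|\rho_{k,j}''\|_{L_1}$, whence $\|\rho_{k,j}^\vee\|_{L_1}$ — is bounded by $C_{k,\eps}\,N_j^k\,h^{-1-N_0}\,e^{-\delta_j/h}$, where $\delta_j$ is no smaller than $\tfrac{2\pi}3(|j|-2)$ for $|j|\ge3$ and no smaller than $\tfrac\pi2$ for $|j|=2$ (the leading contribution being the undifferentiated integral $\int_{J_j}|\rho_{k,j}|$, since the terms carrying a $\psi$‑derivative are supported on much shorter intervals). Finally, because $0<h\le1$ and the ratio $N_j^k h^{-1-N_0}\big/\big(N_j^{k+1}h^2\big)$ grows only polynomially in $1/h$, whereas $e^{-\delta_j/h}$ beats $e^{-\frac{2\pi}{3h}(|j|-2)}$ (resp. $e^{-\pi/(2h)}$) by a factor decaying faster than any power of $1/h$, all of the polynomial loss can be absorbed into the deliberately weakened exponential, giving the stated bounds. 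I expect the main obstacle to be exactly this multiplier bookkeeping near the period‑cube faces: deciding which face each portion of the $\psi$‑overhang of $J_j$ abuts, applying Corollary \ref{Dm_est2} where $|j'|>1$ but a direct ratio estimate near the shared faces at odd multiples $\ge\tfrac{3\pi}h$ of $\tfrac\pi h$, and checking that in every case the exponential rate is at least $\tfrac{2\pi}{3h}(|j|-2)$, resp. $\tfrac\pi{2h}$ — after which the remainder is the routine Leibniz‑plus‑sup‑times‑length computation described above.
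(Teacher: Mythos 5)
Your proposal is correct and follows essentially the same route as the paper: bound $\|\rho_{k,j}^\vee\|_{L_1}$ by the $L_1$ norms of $\rho_{k,j}$ and $\rho_{k,j}''$, expand by the Leibniz rule, estimate each factor by its supremum times the length of $I_j$, and feed in the derivative bounds on $m_{\alpha,h}$ from Section \ref{SECmestimates}, with the minimum $|j'|$ over the three periods met by $I_j$ producing the exponent $\tfrac{2\pi}{3h}(|j|-1)\geq\tfrac{2\pi}{3h}(|j|-2)$. The one place you diverge is the $|j|=2$ overhang into the period of $j'=\pm1$: your direct ratio estimate is an unnecessary detour, since Corollary \ref{Dm_est3} applied with its free parameter equal to $1/2$ already gives $|D^{\gamma}m_{\alpha,h}(\xi)|\leq Ch^{\multi{\gamma}}e^{-\pi/(2h)}$ on exactly that region, which is the bound in part (ii); moreover, your phrase ``the analogous bound for its derivatives'' via the pointwise quotient $|\widehat{\varphi_\alpha}(|\xi|)|/|\widehat{\varphi_\alpha}(\mathrm{dist}(\xi,\tfrac{2\pi}{h}\Z))|$ does not literally transfer to $D^c m_{\alpha,h}$ (derivatives of a quotient need the Leibniz bookkeeping of Lemma \ref{LEM_Daj_ests}), so invoking the corollary is both cleaner and the only fully rigorous way to close that sub-case. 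With that substitution your argument is the paper's proof, up to a harmless weakening of the exponential to absorb polynomial factors in $1/h$.
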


\begin{proof}
As $\rho$ is continuous and square-integrable, we may make use of the fact that if $$\max\left\{\dint_\R |\rho''(\xi)|d\xi,\dint_\R|\rho(\xi)|d\xi\right\}\leq M,$$
then
$|\rho^\vee(x)|\leq\dfrac{M}{\left(1+|x|^2\right)}$, whence $\|\rho^\vee\|_{L_1}\leq \pi M$.

Notice first that due to the support of $\psi$, $\rho_{k,j}(\xi)=0$ outside the interval $$I_j:=\left[\frac{(2j-1)\pi-2\eps}{h},\frac{(2j+1)\pi+2\eps}{h}\right].$$ Consequently, 

$$\dint_\R\left|\rho''_{k,j}(\xi)\right|d\xi
\leq 2\left(\frac{\pi+2\eps}{h}\right)\left\|\dfrac{d^2}{d\xi^2}\left[\xi^k\psi\left(h\xi-2\pi j\right)m_{\alpha,h}(\xi)\right]\right\|_{L_\infty(I_j)}.$$
The second derivative term in the $L_\infty$ norm above is equal to
$$\sum_{\multi{\gamma}=2}C_\gamma D^{\gamma_1}(\xi^k)D^{\gamma_2}\left(\psi\left(h\xi-2\pi j\right)\right)D^{\gamma_3}(m_{\alpha,h}(\xi)),$$
where $\gamma=(\gamma_1,\gamma_2,\gamma_3)$ is a multi-index, and $C_\gamma = \frac{2!}{\gamma_1!\gamma_2!\gamma_3!}$.

Notice that $|D^{\gamma_1}(\xi^k)|\leq C|\xi|^{k-\gamma_1}\leq C\left|\frac{\pi+2\pi j+ 2\eps}{h}\right|^k$, and also that
$$\left\|D^{\gamma_2}\left(\psi\left(h\xi-2\pi j\right)\right)\right\|_{L_\infty(I_j)}\leq Ch^{\gamma_2},$$
since derivatives of $\psi$ are again $C^\infty$ functions.

Consequently, the dominating terms will be those of the form $\xi^k\psi(h\xi-2\pi j)m_{\alpha,h}''(\xi).$  It remains then to consider how large $\|m_{\alpha,h}''(\xi)\|_{L_\infty(I_j)}$ can be.

%%%%%%%%%%%%%%%%%%%%
%%%%%%%%%%%%%%%%%%%%
From Corollary \ref{Dm_est2}, we find that if $|j|\geq 3$, then 
\[
\|m_{\alpha,h}''\|_{L_\infty(I_j)}\leq C h^2 e^{-\frac{2\pi(|j|-1)}{3h}}.
\]
Similarly, if $|j|=2$, then Corollary \ref{Dm_est3} with $\eps=1/2$ implies
\[
\|m_{\alpha,h}''\|_{L_\infty(I_j)}\leq C h^2e^{-\frac{\pi}{2h}}.
\]
The conclusion of the lemma follows from combining the above estimates.

\end{proof}

Now for the terms corresponding to $j=\pm1$, we must be more careful, because the estimates in Section \ref{SECmestimates} do not give uniform bounds (in terms of $h$) near the boundary points $\pm\pi$.  
%Indeed, if we use $(ii)$ of that proposition with $\eps=0$, then we only have that $m_{\alpha,h}$ is bounded by a constant divided by a large power of $h$.  
Nonetheless, we may make a slight modification to the above argument to achieve our purposes.

If $j=1$, then we decompose the bump function $\psi$ as the sum of two bump functions.  If $\nu$ is the original bump function considered in the case of $|j|>1$, then let $\psi(\xi) = \omega(\xi)+\nu(\xi+\pi)$, where $\omega$ is a bump function whose support lies in $[-\pi+\eps,\pi+2\eps]$, and $\omega(\xi)=1$ whenever $\xi\in[-\pi+2\eps,\pi+\eps]$.  Note that this requires $\omega(\xi)+\nu(\xi)=1$ on the small overlapping interval $[-\pi+\eps,-\pi+2\eps]$, but this is no problem. 

If $j=-1$, then make a simple modification and write $\psi(\xi)=\overset{\sim}\omega(\xi)+\nu(\xi-\pi)$, where $\overset{\sim}\omega$ is defined with asymmetric support in $[-\pi-2\eps,\pi+\eps]$.  Since the proof is the same, we deal only with the case $j=1$.  

Similar to the previous case, we may write

\begin{equation}\label{EQGk1}
\widehat{G_{k,1}}(\xi) = \widehat{f}\left(\xi-\frac{2\pi }{h}\right)\sigma_{k}(\xi) + \widehat{f}\left(\xi-\frac{2\pi }{h}\right)\widetilde\sigma_{k}(\xi),
\end{equation}

where

$$\sigma_{k}(\xi) := (i\xi)^k\omega\left(h\xi-2\pi \right)m_{\alpha,h}(\xi),$$
and
$$\widetilde\sigma_{k}(\xi) := (i\xi)^k\nu\left(h\xi-\pi \right)m_{\alpha,h}(\xi).$$

Let us first analyze $\widetilde\sigma_{k}$ by identifying $\xi^k$ with its Taylor series.  We obtain

\begin{align}\label{EQsigma}
\widetilde\sigma_{k}(\xi) &= i^k\finsum{l}{0}{k}\binom{k}{l}\left(\frac{2\pi}{h}\right)^{k-l}\left(\xi-\frac{2\pi}{h}\right)^{l}\nu\left(h\xi-\pi\right)m_{\alpha,h}(\xi)\nonumber\\
& = i^k\left(\xi-\frac{2\pi}{h}\right)^km_{\alpha,h}(\xi)\finsum{l}{0}{k}\binom{k}{l}\left(\frac{2\pi}{h}\right)^{k-l}\frac{\nu(h\xi-\pi)}{(\xi-\frac{2\pi}{h})^{k-l}}\nonumber\\
& =: i^k\left(\xi-\frac{2\pi}{h}\right)^km_{\alpha,h}(\xi)\mu_k(\xi).
\end{align}

Consider $\mu_k$ as a Fourier multiplier, and note that
\begin{equation}\label{EQMu}\|\mu_k\|_{\mathcal{M}_p}\leq\finsum{l}{0}{k}\binom{k}{l}(2\pi)^{k-l}\left\|\left(\dfrac{\nu(\cdot-\pi)}{(\cdot-2\pi)^{k-l}}\right)^\vee\right\|_{L_1}\leq C,\end{equation}
where $C$ is a constant depending only on $k$.  This follows because the function $\nu(\cdot-\pi)/(\cdot-2\pi)^{k-l}$ belongs to $\schwartz$ since the support of $\nu(\cdot-\pi)$ is bounded away from the point $2\pi$.

Of course, the case $j=-1$ is essentially the same.  We obtain the same upper bound on the modified version of $\mu_k$.  We now turn to the multiplier $\sigma_{k}$ for $j=\pm1$.  

\begin{lemma}\label{LEMgammaestimate}
Let $k\in\N$ and $\alpha\in[1/2,\infty)$.  If $j=\pm1$, then there is a constant $C$ depending on $\eps$ and $k$ such that the following holds.

$$\|\sigma_{k}^\vee\|_{L_1}\leq C\left|\frac{\pi(2j+1)+2\eps}{h}\right|^{k+1}h^2e^{-\frac{\pi\eps}{h}}.$$

Consequently, there is a constant $C$, independent of $h$, such that
$$\|\sigma_{k}^\vee\|_{L_1}\leq C.$$
\end{lemma}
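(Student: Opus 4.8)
The structure mirrors Lemma \ref{LEMtauestimate}, except now the bump factor $\omega(h\,\cdot - 2\pi)$ in $\sigma_k$ is supported in a region where $m_{\alpha,h}$ enjoys the good decay estimate of Corollary \ref{Dm_est3} (the region $R_j$), rather than the boundary-face region where only the trivial bound $|\widehat{L_{\alpha,c}}|\leq 1$ is available. So the plan is: (i) localize the support of $\sigma_k$; (ii) apply the same "$\rho$ and $\rho''$ in $L^1$ imply $\rho^\vee$ in $L^1$" trick; (iii) bound $\|m_{\alpha,h}''\|_{L_\infty}$ on that support by Corollary \ref{Dm_est3}; (iv) assemble, and finally observe that $h^2 e^{-\pi\eps/h}$ decays faster than any power of $h^{-1}$, killing the polynomial prefactor.

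**Key steps.** First I would pin down the support: since $\omega$ is supported in $[-\pi+\eps,\pi+2\eps]$, the function $\omega(h\xi-2\pi)$ is supported in $\xi\in[(\pi+\eps)/h,(3\pi+2\eps)/h]$, so $\sigma_k$ is supported in an interval $J_1$ of length $\le (2\pi+\eps)/h$, and $|\xi|\leq |(3\pi+2\eps)/h|$ there; symmetrically for $j=-1$. On this interval, $h\xi - 2\pi$ ranges over $[-\pi+\eps,\pi+2\eps]$, so $\xi = 2\pi/h + r$ with $hr \in [-\pi+\eps,\pi+2\eps]$, i.e. $r$ lies in (a subset of) the region $R_{e_1}$ after rescaling by $1/h$; hence Corollary \ref{Dm_est3} applies and gives $\|m_{\alpha,h}''\|_{L_\infty(J_1)}\leq C h^2 e^{-\eps\pi/(\sqrt d\, h)}$ — or $\leq C h^2 e^{-\eps\pi/h}$ in the univariate case at hand. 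Second, since $\sigma_k$ is continuous and square-integrable, I would invoke the same pointwise estimate used in Lemma \ref{LEMtauestimate}: $|\sigma_k^\vee(x)|\leq M/(1+|x|^2)$ where $M = \max\{\|\sigma_k\|_{L_1},\|\sigma_k''\|_{L_1}\}$, so $\|\sigma_k^\vee\|_{L_1}\leq \pi M$. Third, I would expand $\sigma_k''$ via the Leibniz rule into nine terms $D^{\gamma_1}(\xi^k)\,D^{\gamma_2}(\omega(h\xi-2\pi))\,D^{\gamma_3}(m_{\alpha,h}(\xi))$ with $\gamma_1+\gamma_2+\gamma_3 = 2$; each derivative of $\xi^k$ costs at most $C|(3\pi+2\eps)/h|^k$, each derivative of $\omega(h\cdot-2\pi)$ contributes a factor $\leq C h^{\gamma_2}$ (bounded since $h\leq 1$), and the dominant term $\xi^k\omega(h\xi-2\pi)m_{\alpha,h}''(\xi)$ carries the factor $h^2 e^{-\eps\pi/h}$ from the estimate above, while lower-order $m_{\alpha,h}$-derivatives (via Corollary \ref{Dm_est3} with $\multi{\gamma}=0,1$) contribute factors $h^0 e^{-\eps\pi/h}$ and $h^1 e^{-\eps\pi/h}$, all dominated since $h\le 1$. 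Multiplying by $\mathrm{length}(J_1)\le(2\pi+\eps)/h$ gives $\|\sigma_k''\|_{L_1}\le C |(\pi(2j+1)+2\eps)/h|^{k+1} h^2 e^{-\eps\pi/h}$; a parallel, easier bound on $\|\sigma_k\|_{L_1}$ (no derivatives, so only the $m_{\alpha,h}$ estimate with $\multi{\gamma}=0$) gives the same order, and taking the max yields the first displayed inequality.

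**The routine endgame and the main obstacle.** The "consequently" statement is immediate: $|(\pi(2j+1)+2\eps)/h|^{k+1} h^2 e^{-\eps\pi/h}$ with $|j|=1$ is a fixed constant times $h^{-k-1} h^2 e^{-\eps\pi/h} = h^{1-k} e^{-\eps\pi/h}$, and since $e^{-\eps\pi/h}$ decays faster than any negative power of $h$ as $h\to 0^+$, this quantity is bounded uniformly on $(0,1]$. The main subtlety — really the whole point of introducing $\omega$ and the region $R_j$ — is verifying in step one that on the support of $\omega(h\cdot - 2\pi)$ we never hit the troublesome faces $\xi_i = \pm\pi/h$ of the cube where $m_{\alpha,h}$ has no good estimate: the support of $\omega$ starts at $-\pi+\eps$, strictly inside, precisely so that after the shift $\xi\mapsto 2\pi/h + r$ we land in $R_{e_1}$ and Corollary \ref{Dm_est3} is legitimately applicable. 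Once that localization is in hand, everything else is bookkeeping of the Leibniz expansion exactly as in Lemma \ref{LEMtauestimate}. The complementary piece $\widetilde\sigma_k$, which does touch the boundary, has already been dispatched separately via the multiplier bound \eqref{EQMu} on $\mu_k$, so this lemma completes the $j=\pm1$ analysis.
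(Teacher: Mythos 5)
Your proposal is correct and follows essentially the same route as the paper: the paper's proof simply says to mimic Lemma \ref{LEMtauestimate}, noting that the support of $\omega(h\,\cdot-2\pi)$ places $\sigma_k$ on the interval $[\frac{\pi+\eps}{h},\frac{3\pi+2\eps}{h}]$ where Corollary \ref{Dm_est3} gives $\|m_{\alpha,h}''\|_{L_\infty}\leq Ch^2e^{-\eps\pi/h}$, exactly as you argue. Your localization check and the observation that $h^{1-k}e^{-\eps\pi/h}$ is bounded on $(0,1]$ are both the intended points.
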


\begin{proof}
Mimic the proof of Lemma \ref{LEMtauestimate}, except note that in this case, $\supp(\omega)\subset[-\pi+\eps,\pi+2\eps]$, and thus we need to estimate $\|m''_{\alpha,h}\|_{L_\infty(\tilde{I}_j)}$, where $\tilde{I}_j:=[\frac{\pi+\eps}{h},\frac{3\pi+2\eps}{h}].$  This term is majorized by $Ch^2e^{-\frac{\eps\pi}{h}}$ on account of Corollary \ref{Dm_est3}, which provides the stated bound.
\end{proof}

%The proof of the preceding lemma rests on the fact that in the case for $j=1$, for example, the support condition on $\omega$ requires estimating the maximum of $h^2\Lachat''(u)$ on the interval $[\pi+\eps,3\pi+2\eps]$, whence we use Corollary \ref{Dm_est3}.  Use of this estimate requires that the support of $\omega$ does not overlap the boundary point $u=\pi$.

We are now ready to supply the remainder of the proof.

\begin{proof}[Proof of Theorem \ref{THMstabilinterpolation}]

First let us consider the case when $1<p<\infty$ and $\alpha\in[1/2,\infty)$.  By the discussion at the beginning of this section, we find that

\begin{align}\label{EQIhseminormbound}
|I_\alpha^hf|_{W_p^k} &\leq\zsum{j}\|G_{k,j}\|_{L_p}\\\
& = \|G_{k,0}\|_{L_p}+\underset{|j|=1}\dsum\|G_{k,j}\|_{L_p}+\underset{|j|\geq2}\dsum\|G_{k,j}\|_{L_p}\nonumber\\
& =: \Sigma_1+\Sigma_2+\Sigma_3.\nonumber
\end{align}

We already saw that $\Sigma_1\leq C|f|_{W_p^k}$.  It follows from Theorem \ref{THMmultiplierbounded}, Lemma \ref{LEMgammaestimate}, \eqref{EQGk1}, \eqref{EQsigma}, and \eqref{EQMu}, that
\begin{align}\label{EQSigma2bound}
\Sigma_2 &\leq2\left\|\left(i^k\left(\cdot-\frac{2\pi}{h}\right)^k\widehat{f}\left(\cdot-\frac{2\pi}{h}\right)\right)^\vee\right\|_{L_p}\|m_{\alpha,h}\|_{\mathcal{M}_p}\|\mu_k\|_{\mathcal{M}_p}+\|f\|_{L_p}\|\sigma_{k}^\vee\|_{L_1}\nonumber\\
& \leq C\|f\|_{W_p^k}.
\end{align}

Now for the final term, Lemma \ref{LEMtauestimate} implies the following:

\begin{align*}
\Sigma_3 &\leq \|f\|_{L_p}\left[\underset{|j|=2}\dsum\|\rho_{k,j}^\vee\|_{L_1}+\underset{|j|\geq3}\dsum\|\rho_{k,j}^\vee\|_{L_1}\right]\nonumber\\
& \leq C\|f\|_{L_p}+C\|f\|_{L_p}\underset{|j|\geq3}\dsum\left|\frac{\pi(2j+1)+2\eps}{h}\right|^{k+1}h^2e^{-\frac{2\pi}{3h}(|j|-1)}.
\end{align*}

The sum over $j$ on the right hand side is majorized by a constant (depending on $\eps$ and $k$) times $h^{-k+1}e^{-\frac{2\pi}{3h}}$, which in turn is bounded by a constant depending only on $k$.  Consequently, 
\begin{equation}\label{EQSigma3bound}
\Sigma_3\leq C\|f\|_{L_p}.
\end{equation}

Combining \eqref{EQIhseminormbound}, \eqref{EQSigma2bound}, and \eqref{EQSigma3bound} provides the theorem.  Whenever $\alpha\in(-\infty,-d-1/2)$, the only change is in the use of the analogues of Lemmas \ref{LEMtauestimate} and \ref{LEMgammaestimate}.  But the bound will still be some power of $h$ times a decaying exponential, and so the corresponding series from \eqref{EQIhseminormbound} will satisfy the same upper bound up to a different constant $C$.

The proof when $p=1,\infty$ for either range of $\alpha$ is the same except that $\Sigma_1,\Sigma_2\leq C(1+|\ln h|)\|f\|_{W_p^k}$ due to the estimate on $\|m_{\alpha,h}\|_{\mathcal{M}_1}=\|m_{\alpha,h}\|_{\mathcal{M}_\infty}$ (Theorem \ref{THMmultiplierbounded}).
\end{proof}

%%%%%%%%%%%%%%%%%%%%%%%%%%%%%%%%%%%%%%%%%%%%%
%%%%%%%%%%%%%%%%%%%%%%%%%%%%%%%%%%%%%%%%%%%%%
%%%     Multivariate Proof Stability      %%%
%%%%%%%%%%%%%%%%%%%%%%%%%%%%%%%%%%%%%%%%%%%%%
%%%%%%%%%%%%%%%%%%%%%%%%%%%%%%%%%%%%%%%%%%%%%

\subsection{Proof of Theorem \ref{THMstabilinterpolation} - Multivariate Case}\label{SECstabilitymultivariate}

Similar to the univariate case, if $f\in E_\frac{\pi+\eps}{h}\cap W_p^k(\R^d)$, then we have, for $\multi{\gamma}\leq k$,
\[
\widehat{D^\gamma I_\alpha^h f}(\xi) = i^{\multi{\gamma}}\xi^\gamma\left[\zsumd{j}\widehat{f}\left(\xi-\frac{2\pi j}{h}\right)\right]\mah(\xi) =:\zsumd{j}\widehat{G_{\gamma,j}}(\xi),
\]
with $G_{\gamma,j} = D^\gamma[(fe^{i\bracket{\frac{2\pi j}{h},\cdot}}\ast\mah^\vee].$  Consequently,
$|I_\alpha^hf|_{W_p^k}\leq\underset{\multi{\gamma}= k}\sum\;\underset{j\in\Z^d}\sum\|G_{\gamma,j}\|_{L_p}$.  Thus we estimate the norms of $G_{\gamma,j}$ for different values of the given parameters.

{\em Case} $j=0$: By \eqref{EQmultiplierconvolutioninequality} and Theorem \ref{THMmultiplierbounded}, we have
\begin{displaymath}
\|G_{\gamma,0}\|_{L_p}\leq \|\mah\|_{\Mp}|f|_{W_p^k}\leq \left\{
\begin{array}{lll}
C|f|_{W_p^k} & 1<p<\infty\\
C(1+|\ln h|)|f|_{W_p^k} & p=1,\infty.\\
\end{array}\right.
\end{displaymath}

{\em Case} $|j|>1$: Let $\nu$ and $\psi$ be the smooth bump functions from Section \ref{SECstabilityunivariate}, and let $\widetilde{\psi}(\xi) = \psi(|\xi|)$, which will be a smooth bump function with support in the ball $B(0,\pi+2\eps)$ taking value 1 on $B(0,\pi+\eps)$.  Since $\supp(\widehat{f})\subset B\left(0,\frac{\pi+\eps}{h}\right)$, we have
$$\widehat{G_{\gamma,j}}(\xi) = \widehat{f}\left(\xi-\frac{2\pi j}{h}\right)i^{\multi{\gamma}}\xi^\gamma\widetilde{\psi}\left(h\xi-2\pi j\right)\mah(\xi) =:\widehat{f}\left(\xi-\frac{2\pi j}{h}\right)\rho_{\gamma,j}(\xi).$$

By \eqref{EQmultiplieronenormbound} and
\eqref{EQmultiplierconvolutioninequality}, $\|G_{\gamma,j}\|_{L_p}$
is majorized by $\|\rho_{\gamma,j}^\vee\|_{L_1}\left\|\left(\widehat{f}\left(\cdot-\frac{2\pi
j}{h}\right)\right)^\vee\right\|_{L_p}$ by viewing $\rho_{\gamma,j}$ as an $L_p$ multiplier operator.  We now estimate $\|\rho_{\gamma,j}^\vee\|_{L_1}$.

Recall that if $\underset{\multi{\beta}\leq d+1}\sum\|D^\beta\rho_{\gamma,j}\|_{L_1}\leq M$, then $|\rho_{\gamma,j}^\vee(x)|\leq\frac{M}{1+|x|^{d+1}}$, thus $\|\rho_{\gamma,j}^\vee\|_{L_1}\leq CM$.

Since $\supp(\rho_{\gamma,j})\subset B_j:=B(\frac{2\pi j}{h},\frac{\pi+2\eps}{h})$, we have
$$\dint_{\R^d}\left|D^\beta\rho_{\gamma,j}(\xi)\right|d\xi\leq\left|\frac{\pi+2\eps}{h}\right|^d\left\|D^\beta\left(\xi^\gamma\widetilde{\psi}\left(h\xi-2\pi j\right)\mah(\xi)\right)\right\|_{L_\infty(B_j)}.$$
We may write the term at the right as the supremum of the absolute value of the following expression:
$$\underset{\beta_1+\beta_2+\beta_3 = \beta}\sum C_{\beta_1,\beta_2,\beta_3}D^{\beta_1}(\xi^\gamma)D^{\beta_2}\left(\widetilde{\psi}\left(h\xi-2\pi j\right)\right)D^{\beta_3}\mah(\xi).$$
Now we estimate each term in the series above. First, there is a constant $C$ depending on $\beta_1$ and $\gamma$ such that   $|D^{\beta_1}(\xi^\gamma)|\leq C|\xi|^{\multi{\gamma}-\multi{\beta_1}}\leq C\left|\frac{\pi(2|j|+1)+2\eps}{h}\right|^{\multi{\gamma}}.$  Since $\psi$ is $C^\infty,$ there is a constant $C$ depending only on $\multi{\beta_2}$ such that
$|D^{\beta_2}\widetilde{\psi}(h(\xi-\frac{2\pi j}{h}))|\leq Ch^{\multi{\beta_2}}.$  Finally, by Corollary \ref{Dm_est2}, there is a constant $C$ such that
$$\|D^{\beta_3}\mah(\xi)\|_{L_\infty(B_j)}\leq Ch^{\multi{\beta_3}}e^{-\frac{2\pi(|j|-1)}{3dh}}.$$
The appearance of $|j|-1$ in the exponential term rather than $|j|$ is due to the fact that the support of $\widetilde{\psi}$ overlaps multiple cubes, and thus an overlap into a cube corresponding to a smaller value of $|j|$ is possible.  We note that in some cases when $\|j\|_\infty=1$, this may cause an overlap of the support into cubes with $|j|=1$, but the final estimate above holds nonetheless by applying Corollary \ref{Dm_est3} with $\eps = \frac{2(|j|-1)}{3\sqrt{d}}$, which is indeed less than 1.  Note too that we need the additional assumption that the ball $B_j$ does not extend outside of any face of codimension larger than 1 of the cube $\left[-\frac{\pi(2j+1)+2\eps}{h},\frac{\pi(2j+1)+2\eps}{h}\right]^d$.  If this were not the case, then in $d=2$, for example, the ball corresponding to $j=(1,1)$ would have part of its support in the cube centered about the origin, which is undesirable since we have no uniform control on derivatives of $\mah$ on the boundary of this cube.  Similarly, in $d=3$, the ball corresponding to $j=(1,1,0)$ could overlap the line segment connecting $B_j$ to the cube at the origin.  However, if we assume that $\eps\leq \frac{(\sqrt{2}-1)\pi}{2}$, then such an overlap will not occur.  We note that the assumption on $\eps$ being small is not strictly necessary, but without it the estimates break into many more cases, which would unnecessarily convolute the proof.

Combining the estimates above provides the bound
$$\|\rho_{\gamma,j}^\vee\|_{L_1}\leq \left|\dfrac{\pi(2|j|+1)+2\eps}{h}\right|^{\multi{\gamma}+d}h^{d+1}e^{-\frac{2\pi(|j|-1)}{3dh}}.$$

{\em Case} $|j|=1$:  Since the end result is the same for each integer of length 1, we restrict our attentions to the case $j=e_1=(1,0,\dots,0)$ without loss of generality.  In this case, the ball $B_{e_1}$ containing the support of $\widehat{f}(\cdot-\frac{2\pi e_1}{h})$ overlaps one of the faces of the cube $[-\frac{\pi}{h},\frac{\pi}{h}]^d$, where we do not have uniform bounds (in $h$) for $\mah$.  Consequently, we cannot execute an argument similar to that in the case $|j|>1$.  However, let $\psi$, $\nu$, and $\omega$ be the smooth bump functions from Section \ref{SECstabilityunivariate}, and define $\widetilde{\psi}(\xi):=\psi(\xi_1)\dots\psi(\xi_d)$, $\nutilde(\xi)  :=\nu(\xi_1+\pi)\psi(\xi_2)\dots\psi(\xi_d)$, and $\wtilde(\xi):=\omega(\xi_1)\psi(\xi_2)\dots\psi(\xi_d)$. Then $\supp(\nutilde)=[-\pi-2\eps,-\pi+2\eps]\times[-\pi-2\eps,\pi+2\eps]^{d-1}$ and $\supp(\wtilde)=[-\pi+\eps,\pi+2\eps]\times[-\pi-2\eps,\pi+2\eps]^{d-1}$.  On the overlap of their supports, $\nutilde(\xi)+\wtilde(\xi)=1$, and for every $\xi$, $\nutilde(\xi)+\wtilde(\xi) = \widetilde{\psi}(\xi)$. Thus we may write
$$\widehat{G_{\gamma,e_1}}(\xi) = \widehat{f}\left(\xi-\frac{2\pi e_1}{h}\right)\left[\sigma_{1,\gamma,e_1}(\xi)+\sigma_{2,\gamma,e_1}(\xi)\right],$$
where
$$\sigma_{1,\gamma,e_1}(\xi) := i^{\multi{\gamma}}\xi^\gamma\nutilde\left(h\xi-2\pi e_1\right)\mah(\xi),$$
and $$\sigma_{2,\gamma,e_1}(\xi):=i^{\multi{\gamma}}\xi^\gamma\wtilde\left(h\xi-2\pi e_1\right)\mah(\xi).$$

Consider first that $\sigma_{1,\gamma,e_1}(\xi) = i^{\multi{\gamma}}\xi^\gamma\nu(h(\xi_1-\frac{\pi}{h}))\mah(\xi)$, since on the support of $\widehat{f}$, $\phi(h\xi_i)=1$ for $i=2,\dots,d$.
Let $\gamma' = \gamma-\gamma_1$, and identify $\xi_1^{\gamma_1}$ with its Taylor series to obtain

\begin{align*}%\label{EQsigmamultivariate}
\sigma_{1,\gamma,e_1}(\xi) &= i^{\multi{\gamma}}\xi^{\gamma'}\finsum{l}{0}{\gamma_1}\binom{\gamma_1}{l}\left(\frac{2\pi}{h}\right)^{\gamma_1-l}\left(\xi_1-\frac{2\pi}{h}\right)^{l}\nu\left(h\xi_1-\pi\right)m_{\alpha,h}(\xi)\nonumber\\
& = i^{\multi{\gamma}}\xi^{\gamma'}\left(\xi_1-\frac{2\pi}{h}\right)^{\gamma_1}m_{\alpha,h}(\xi)\finsum{l}{0}{\gamma_1}\binom{\gamma_1}{l}\left(\frac{2\pi}{h}\right)^{\gamma_1-l}\frac{\nu(h\xi_1-\pi)}{(\xi_1-\frac{2\pi}{h})^{\gamma_1-l}}\nonumber\\
& =: i^{\multi{\gamma}}\left(\xi-\frac{2\pi e_1}{h}\right)^\gamma m_{\alpha,h}(\xi)\mu_{\gamma_1,e_1}(\xi).
\end{align*}

Consider $\mu_{\gamma_1,e_1}$ as a univariate Fourier multiplier, and note that
$$\|\mu_{\gamma_1,e_1}\|_{\Mp}\leq\finsum{l}{0}{\gamma_1}\binom{\gamma_1}{l}(2\pi)^{\gamma_1-l}\left\|\left(\dfrac{\nu(\cdot-\pi)}{(\cdot-2\pi)^{\gamma_1-l}}\right)^\vee\right\|_{L_1(\R)}\leq C,$$
where $C$ is a constant depending only on $\gamma_1$, which depends on $k$.  

Subsequently, notice that $\mu_{\gamma_1,e_1}$ may be viewed as a Fourier multiplier on $L_p(\R^d)$ via the tensor product representation $\mu_{\gamma_1,e_1}(\xi)= \mu_{\gamma_1,e_1}(\xi_1)I(\xi_2)\dots I(\xi_d)$, where $I$ is the constant 1 function, whose multiplier norm is 1.  Consequently $\|\mu_{\gamma_1,e_1}\|_{\Mp}$ is majorized by the constant $C$ above.

Moving on, we use an analogous estimate to the case $|j|>1$ to find a bound on the multiplier $\sigma_{2,\gamma,e_1}$.  All of the terms are estimated the same way, except that since $\wtilde$ is supported on $[\frac{(1+\eps)\pi}{h},\frac{3\pi}{h}]\times[-\frac{\pi}{h},\frac{\pi}{h}]^{d-1}$, we use Corollary \ref{Dm_est3} to find that the maximum of $|D^{\beta}\mah(\xi)|$ on the cube is a constant multiple of $h^{\multi{\beta}}e^{-\frac{\eps\pi}{\sqrt{d}h}}$.

Now the proof of Theorem \ref{THMstabilinterpolation} follows by the same calculation in Section \ref{SECstabilityunivariate}.

\section{Remark}\label{SECremark}

Since cardinal interpolation of smooth functions can be done using many of the classical RBFs, it is a natural question to ask for which RBFs can the results of \cite{hmnw} and this work be extended?  Cardinal interpolation using translates of thin plate splines, polyharmonic splines, and B-splines (see, for example, \cite{Buhmann,BuhmannBook}) has been considered.  However, the method of proof here makes essential use of the fact that the Fourier transform of the cardinal function decays exponentially away from the origin.  This is the case for the Gaussian and the general multiquadrics, but for each of the other RBFs mentioned, the Fourier transforms of their cardinal functions decay only algebraically away from the origin.  Therefore, if such results as are obtained here are to be extended to these other natural RBFs, it would seem that different techniques are needed.  

It is germane to mention the case of the Poisson kernel, which corresponds to $\alpha=-(d+1)/2$ in \eqref{EQgmcdef} above.  For this example, the Fourier transform of the cardinal function indeed decays exponentially; however, the argument in Section \ref{SECstabilitymultivariate} cannot be used since it requires the boundedness of the derivatives of $m_{\alpha,h}$ near the origin.  Thus the results cannot be extended to the multivariate Poisson kernel.  But when $d=1$, the derivatives near the origin are bounded, hence a straightforward modification of the proof given in Section \ref{SECstabilityunivariate} shows that the results of this paper may be extended to the univariate Poisson kernel, $(x^2+c^2)^{-1}$. In particular, the following holds.

\begin{theorem}
 Let $\alpha=-1$, $1<p<\infty$, $k\in\N$, and $0<h\leq1$.  There exists a constant $C$, independent of $h$, so that for every $g\in
W_p^k(\R)$,
 $$\|I_\alpha^hg-g\|_{L_p}\leq Ch^k\|g\|_{W_p^k}.$$
 
 If $p=1$ and $k>1$ or $p=\infty$ and $k\in\N$, there is a constant $C$, independent of $h$, so
that for every $g\in W_p^k(\R)$,
 $$\|I_\alpha^hg-g\|_{L_p}\leq C(1+|\ln h|)h^k\|g\|_{W_p^k}.$$
\end{theorem}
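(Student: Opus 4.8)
The plan is to reuse the entire architecture behind Theorems~\ref{THMmaintheorem} and~\ref{THMstabilinterpolation}. By Lemma~\ref{LEMhmnwapproxbandlimited}, for $g\in W_p^k(\R)$ choose $f\in E_\frac{\pi+\eps}{h}\cap W_p^k(\R)$ interpolating $g$ on $h\Z$, so that $I_{-1}^hg=I_{-1}^hf$; combining the Madych--Potter bound on $W_p^k$ functions with closely spaced zeros (as in the proof of Theorem~\ref{THMmaintheorem}) with the Jackson inequality \eqref{EQJackson} reduces the assertion to the univariate analogue of Theorem~\ref{THMstabilinterpolation}: the $h$-uniform bound $|I_{-1}^hf|_{W_p^k}\le C\|f\|_{W_p^k}$ (respectively $\le C(1+|\ln h|)\|f\|_{W_p^k}$ for $p=1,\infty$) for bandlimited $f$. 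For this I would run the argument of Section~\ref{SECstabilityunivariate} line for line, i.e.\ the decomposition $\widehat{(I_{-1}^hf)^{(k)}}=\sum_j\widehat{G_{k,j}}$ and the split into $j=0$, $|j|=1$, $|j|\ge 2$. The only missing pieces are the estimates for the multiplier $m_{-1,h}$ (the analogues of Theorems~\ref{THMmultiplierbounded} and~\ref{DmL1} and of Lemmas~\ref{LEMtauestimate}--\ref{LEMgammaestimate}), which cannot be quoted from Section~\ref{SECmestimates} since $\alpha=-1$ lies outside the range $\alpha<-d-1/2$ treated there; instead one gets them from the closed form of the transform.

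Indeed, since $\widehat{\phi_{-1,c}}(\xi)=\frac{\pi}{c}e^{-c|\xi|}$, the rewriting of \eqref{EQmultiplierdef} from Section~\ref{SECmestimates} gives
\[
m_{-1,h}(\xi)=\frac{e^{-|\xi|}}{\sum_{j\in\Z}e^{-|\xi+\frac{2\pi j}{h}|}},
\]
which on the central interval $[-\pi/h,\pi/h]$ collapses to $\bigl(1+(1+e^{2|\xi|})(e^{2\pi/h}-1)^{-1}\bigr)^{-1}$, and on the translate $\frac{2\pi j}{h}+[-\pi/h,\pi/h]$ equals $a_j(r)m_{-1,h}(r)$ with $a_j(r)=e^{|r|-|r+\frac{2\pi j}{h}|}$, which together with its first two derivatives is $\le Ce^{-2\pi(|j|-1)/h}$ uniformly for $|r|\le\pi/h$ when $|j|\ge2$ and $\le1$ when $|j|=1$. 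From this I would read off: (i) $0\le m_{-1,h}\le1$ on $\R$; (ii) $m_{-1,h}$, $m_{-1,h}'$ and $m_{-1,h}''$ are bounded uniformly in $h$ --- in particular near the origin, where $m_{-1,h}$ is $C^1$ apart from an exponentially small corner at $0$; (iii) the denominator is $\frac{2\pi}{h}$-periodic, stays at least a fixed multiple of $e^{-\pi/h}$, and has controlled derivatives, so $|m_{-1,h}''(\xi)|\le Ce^{-|\xi|+\pi/h}$ once $\xi$ leaves a neighbourhood of the central interval, whence $\|m_{-1,h}''\|_{L_\infty(I_j)}\le Ce^{-2\pi(|j|-1)/h}$ for $|j|\ge2$, and the analogous bound on the shifted intervals $\widetilde I_j$ that appear in Lemma~\ref{LEMgammaestimate}. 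Item (ii) is precisely the phenomenon special to dimension one: in dimension $d\ge2$ the chain rule \eqref{chain} feeds the homogeneous factors $\Omega_l$, which have negative degree, into $D^\gamma m_{-1,h}$, so its derivatives blow up near $\xi=0$ and the argument of Section~\ref{SECstabilitymultivariate} breaks down.

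For the multiplier norm: by (ii)--(iii), $m_{-1,h}$ has total variation bounded uniformly in $h$ (the central interval contributes $O(1)$, each translate $I_j$ only $O(e^{-2\pi(|j|-1)/h})$), so for $1<p<\infty$ the Marcinkiewicz multiplier theorem --- equivalently, writing $m_{-1,h}(\xi)=\int\one_{(t,\infty)}(\xi)\,dm_{-1,h}(t)$ and using that each modulated Heaviside multiplier is bounded on $L_p$ with constant independent of $t$ --- yields $\|m_{-1,h}\|_{\Mp}\le C$ uniformly in $h$. This is the one place where the Mikhlin argument of Theorem~\ref{THMmultiplierbounded} is not available for $\alpha=-1$: by (iii), $m_{-1,h}'$ is only $O(1)$, not $O(h)$, near $\xi=\pm\pi/h$, so $|\xi|\,|m_{-1,h}'(\xi)|$ does not stay bounded in $h$. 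For $p=1,\infty$ I would instead establish the analogue of Corollary~\ref{CORmultiplierinversebounds}, namely $|m_{-1,h}^\vee(x)|\le C\min\{h^{-1},|x|^{-1},|x|^{-2}\}$; this comes from $|L_{-1,c}(x)|\le1$ together with one and two integrations by parts in $L_{-1,c}=(\widehat{L_{-1,c}})^\vee$, using that $\|\widehat{L_{-1,c}}'\|_{L_1}\le C$ while $\|\widehat{L_{-1,c}}''\|_{L_1}=O(c)$ uniformly in $c\ge1$ (a direct computation from the explicit formula), the dilation $c=1/h$ then making the powers of $h$ combine as claimed. It follows as in Theorem~\ref{THMmultiplierbounded} that $\|m_{-1,h}\|_{\mathcal{M}_1}=\|m_{-1,h}\|_{\mathcal{M}_\infty}\le\|m_{-1,h}^\vee\|_{L_1}\le C(1+|\ln h|)$.

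With (i)--(iii) and these multiplier-norm bounds, the proof of Theorem~\ref{THMstabilinterpolation} in Section~\ref{SECstabilityunivariate} goes through verbatim: the $j=0$ term is controlled by the multiplier norm, the $|j|=1$ terms by the $\mu_k$-multiplier device together with the analogue of Lemma~\ref{LEMgammaestimate}, and the $|j|\ge2$ terms by the analogue of Lemma~\ref{LEMtauestimate}; the only cosmetic change is that the factor $h^2$ appearing there is replaced by a smaller power of $h$, which is harmless because the $j$-sums still converge uniformly thanks to the exponential decay in $1/h$ and in $|j|$. The reduction of the first paragraph then completes the proof. I expect the main obstacle to be exactly this package of estimates for $m_{-1,h}$ --- the multiplier norm for $1<p<\infty$ and the uniform boundedness of the low-order derivatives of $m_{-1,h}$ near the origin --- since these are what the general machinery of Section~\ref{SECmestimates} supplies only for $\alpha$ outside $[-d-1/2,0]$, and what genuinely fails, and is false, for the Poisson kernel in dimension $d\ge2$.
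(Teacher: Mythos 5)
Your proposal is correct, and at the level of overall architecture it is exactly what the paper intends: the paper offers no written proof of this theorem beyond the assertion that ``a straightforward modification of the proof given in Section \ref{SECstabilityunivariate}'' works, and your first and last paragraphs carry out that modification (reduction via Lemma \ref{LEMhmnwapproxbandlimited} and Madych--Potter, then the $j=0$, $|j|=1$, $|j|\ge 2$ decomposition). Where you genuinely depart from the paper's machinery --- and where your write-up adds real content --- is the uniform bound $\|m_{-1,h}\|_{\Mp}\le C$ for $1<p<\infty$. The paper's Theorem \ref{THMmultiplierbounded} gets this from the Mikhlin condition, which rests on the gain $|D^\gamma m_{\alpha,h}(\xi)|\le Ch^{[\gamma]}$ on the central interval (Corollary \ref{Dm_est1}); that gain comes from the algebraic ratio in \eqref{a_j} and is simply absent for the Poisson kernel, whose transform is a pure exponential. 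Your computation that $m_{-1,h}'(\xi)\to -\tfrac12$ as $\xi\to\pi/h$, so that $|\xi|\,|m_{-1,h}'(\xi)|\sim\pi/(2h)$ there, correctly shows the Mikhlin route fails uniformly in $h$, and your replacement --- uniformly bounded total variation plus the Marcinkiewicz/Heaviside-superposition argument --- is a valid fix (the variation really is $O(1)$ since $m_{-1,h}$ is monotone on each half of the central interval and contributes only $O(e^{-2\pi(|j|-1)/h})$ on each translate). The remaining pieces (the $\min\{h^{-1},|x|^{-1},|x|^{-2}\}$ bound on $m_{-1,h}^\vee$ via two integrations by parts with exponentially small corner terms at $\tfrac{2\pi}{h}\Z$, the $L_\infty(I_j)$ bounds on $m_{-1,h}''$ with the harmless loss of the factor $h^2$, and the unchanged $\mu_k$ device at $|j|=1$) all check out, as does your diagnosis of why the argument is confined to $d=1$. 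In short: same skeleton as the paper, but you correctly identify that one load-bearing step is not as ``straightforward'' as the paper's remark suggests and you supply a sound substitute for it.
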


\begin{comment}
It should also be pointed out that one main difference between the techniques used in this work and those of \cite{hmnw} is that the latter exploited the fact that the Gaussian cardinal function on $\R^d$ may be written as the $d$-fold tensor product of the univariate Gaussian cardinal function, whereas this is not true for general multiquadrics.  However, the techniques here may be adapted to the Gaussian, since all that is required is a radial estimate on the decay of the multivariate cardinal function associated with the Gaussian.  Indeed, if one carries out the estimates in Section \ref{SECmestimates} for the Gaussian multiplier given by $$m_h(\xi)=\dfrac{e^{-\frac{1}{4}|\xi|^2}}{\zsum{j}e^{-\frac{1}{4}\left|\xi+\frac{2\pi j}{h}\right|^2}},\quad\xi\in\R^d,$$
one finds that $|m_h^\vee(x)|\leq C_h|x|^{-d-1}$ for some constant depending on $h$ (cf. Corollary \ref{CORmultiplierinversebounds}), and thus the following holds for Gaussian interpolation on $L_1$ and $L_\infty$ Sobolev functions (this is a slight strengthening of the second part of Theorem 2.1 in \cite{hmnw} in that obtaining radial estimates for $m_h^\vee$ allows us to get rid of the power of $d$ on the logarithmic term).

\begin{theorem}
Suppose either $p=1$ and $k> d$, or $p=\infty$ and $k\in\N$.  Let $I^h$ be the Gaussian interpolation operator of \cite{hmnw}.  Then there exists a constant $C$, independent of $h$, so that for every $g\in W_p^k(\R^d)$,
$$\|I^hg-g\|_{L_p}\leq C(1+|\ln h|)h^k\|g\|_{W_p^k}.$$
\end{theorem}
\end{comment}

For general results on RBF interpolation of bandlimited functions at nonuniform sequences of points, refer to \cite{HammZonotopes,Ledford_Scattered,Ledford_Poisson,ss}, while \cite{Hamm} contains a nonuniform analogue to Theorem \ref{THMmaintheorem} for Gaussian interpolation in one dimension.  Also, a recent work of Buhmann and Dai \cite{BuhmannDai} examined pointwise error estimates for quasi-interpolation schemes involving RBFs.  For a pleasant discussion of the computational aspects and asymptotic behavior of interpolation with the Hardy multiquadric and other RBFs, the reader is referred to \cite{Madych}.

%%%%%%%%%%%%%%%%%%%%%%%%%%%%%%
%%%%%%%%%%%%%%%%%%%%%%%%%%%%%%
%%%%%%%%%%%%%%%%%%%%%%%%%%%%%%
%% coefficient estimation
%%%%%%%%%%%%%%%%%%%%%%%%%%%%%%
%%%%%%%%%%%%%%%%%%%%%%%%%%%%%%
%%%%%%%%%%%%%%%%%%%%%%%%%%%%%%

\appendix

\section{Proof of \eqref{EQIFourierDistribution}}
We now give the full proof of identity \eqref{EQIFourierDistribution} for completeness.  First, note that it suffices to prove the identity for the case when $h=1$, since the use of the Poisson summation formula will thus be justified for any dilation of the lattice.  Recall that a tempered distribution $T$ is $2\pi$--periodic provided $\bracket{\tau_{2\pi}T,\psi}=\bracket{T,\psi}$ for every $\psi\in\schwartz$.  For simplicity in writing the expressions that follow, all sums are to be taken over $\Z^d$.

Note that any periodic tempered distribution may be expressed as $\sum_{j}c_je_{-j}$, where the Fourier coefficients $c_j$ are of at most polynomial growth in $|j|$ .  In particular, if $T=\sum_{j}\tau_{2\pi j}\widehat{f}$, then $c_k=\bracket{\sum_{j}\tau_{2\pi j}(\widehat{f}e_{-k}),\psi},$ where $\psi$ is a $C^\infty$ bump function which forms a partition of unity for $\R^d$.  We have used here the fact that if $f\in E_{\pi+\eps}\cap W_p^k$, then $\sum_j\tau_{2\pi j}\widehat{f}$ is a $2\pi$--periodic distribution.

\begin{lemma}\label{LEMcj}
If $c_k$ are the Fourier coefficients of $T$ defined above, then $c_k=f(k)$.
\end{lemma}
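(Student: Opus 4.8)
This is essentially the assertion that the Fourier coefficients of the $2\pi\Z^d$--periodization of $\widehat f$ are the samples $f(k)$, a special case of the Poisson summation formula; but since the purpose of the appendix is to establish a Poisson-type identity without circularity, the plan is to prove it directly. The key structural input is that $f\in E_{\pi+\eps}\cap W_p^k$ forces $\widehat f$ to be a compactly supported tempered distribution: by the Paley--Wiener--Schwartz theorem $\supp\widehat f\subset\overline{B(0,\pi+\eps)}$, so the translates $\tau_{2\pi j}\widehat f$, $j\in\Z^d$, form a locally finite family; in particular $T=\sum_j\tau_{2\pi j}\widehat f$ is a genuine $2\pi$--periodic element of $\schwartz'$, and near the compact support of the partition-of-unity bump $\psi$ only finitely many of the $\tau_{2\pi j}\widehat f$ are nonzero. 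This local finiteness is exactly what legitimizes the interchanges below.

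First I would extract $c_k$ from $T$ against $\psi$ in the usual way: writing out each translate and using that the pertinent sum over $j$ is finite, one obtains $c_k=(2\pi)^{-d}\sum_j\langle\widehat f,\,e_k\,\psi(\cdot-2\pi j)\rangle$, where $e_k(\xi)=e^{i\langle k,\xi\rangle}$ and each summand is a legitimate pairing of the tempered distribution $\widehat f$ with a Schwartz function. Next, since $k\in\Z^d$ the exponential $e_k$ is $2\pi\Z^d$--periodic, hence unaffected by the translate, and $\sum_j e_k\,\psi(\cdot-2\pi j)=e_k\sum_j\psi(\cdot-2\pi j)=e_k$ by the partition-of-unity identity; as $\widehat f$ has compact support, this collapse is exact at the level of the pairing, leaving $c_k=(2\pi)^{-d}\langle\widehat f,\,e_k\rangle$.

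It then remains to identify $\langle\widehat f,e_k\rangle$ with $(2\pi)^d f(k)$. Since $\widehat f$ is compactly supported, its inverse Fourier transform is represented by the smooth (indeed entire) function $x\mapsto(2\pi)^{-d}\langle\widehat f,e_x\rangle$; this function agrees with $f$ as a tempered distribution, and because $f$ is continuous (Sobolev embedding, since $k>d/p$) the two coincide pointwise, whence $\langle\widehat f,e_k\rangle=(2\pi)^d f(k)$ and therefore $c_k=f(k)$. The only points needing care are the bookkeeping for the distributional interchanges --- all controlled by the compact supports of $\widehat f$ and $\psi$ --- and keeping the normalization constants straight; neither is a genuine obstacle, so this is effectively the entire argument.
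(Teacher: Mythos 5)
Your argument is correct and is essentially the paper's own proof viewed from the Fourier side: both collapse $\sum_j\tau_{2\pi j}\psi$ to the constant function $1$ via the partition of unity and then identify the resulting pairing of $\widehat f$ with the exponential $e_k$ as $f(k)$, using the compact support of $\widehat f$ and the continuity of $f$ (the paper phrases this last step as $\bracket{f,\tau_k\delta}=f(k)$, which you justify slightly more explicitly). Modulo the sign/normalization conventions, which the paper itself treats loosely, the two proofs coincide.
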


\begin{proof}
Recalling that $\sum_j\tau_{2\pi j}\psi=1$, we have
\begin{align*}
    c_k &= \bracket{\sum_j\tau_{2\pi j}(\widehat{f}e_{-k}),\psi}\\
    & = \bracket{f,\left(e_{-k}\sum_{j}\tau_{2\pi j}\psi\right)^\wedge}\\
    & = \bracket{f,\tau_k\left(\sum_{j}\tau_{2\pi j}\psi\right)^\wedge}\\
    & = \bracket{f,\tau_k\delta}\\
    & = f(k).\\
\end{align*}
\end{proof}

Our first task is to define what $\sum_j\tau_{2\pi j}\widehat{f}m$ is as a tempered distribution (for ease, we drop the subscripts on $m$). To wit, set
\begin{equation}\label{EQTemp}\bracket{\sum_j\tau_{2\pi j}\widehat{f}m,\psi}:=\inflim{\lambda}\bracket{\sum_j\tau_{2\pi j}\widehat{f}m_\lambda,\psi},\end{equation}
where $m_\lambda$ is $m$ convolved with a standard $C^\infty$ approximate identity for $L_1$, with $\inflim{\lambda}m_\lambda=m$ in $L_1$.  First, note that the right-hand side of \eqref{EQTemp} is well-defined for any fixed $\lambda$ since $m_\lambda\in C^\infty$.  Moreover, for fixed $\lambda$, we may write $\bracket{\sum_j\tau_{2\pi j}\widehat{f}m_\lambda,\psi}=\bracket{\sum_jc_je_{-j}m_\lambda,\psi}$ from the discussion above, and consequently, if the limit exists, these limits are the same.

\begin{lemma}\label{LEMAppendix}
The following holds:
$$\bracket{\sum_jc_je_{-j}m,\psi}=\bracket{\sum_jc_j\tau_jm^\vee,\widehat\psi}.$$
\end{lemma}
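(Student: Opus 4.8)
The plan is to prove the identity first with $m$ replaced by the mollification $m_\lambda$, where every manipulation is elementary, and then to let $\lambda\to\infty$ on both sides. Throughout I use that $m=m_{\alpha,h}\in L_1\cap L_2(\R^d)$ (being a dilate of $\widehat{L_{\alpha,1/h}}$), so that, with $\varphi_\lambda$ the standard $L_1$ mollifier defining $m_\lambda=m*\varphi_\lambda$, the function $m_\lambda$ is $C^\infty$ with each $D^\gamma m_\lambda$ bounded by $\|m\|_{L_1}\|D^\gamma\varphi_\lambda\|_{L_\infty}$, while $m_\lambda^\vee$ is a continuous $L_2$ function with $\widehat{m_\lambda^\vee}=m_\lambda$. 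Writing $m_\lambda^\vee=(2\pi)^d m^\vee\varphi_\lambda^\vee$, and using that $|\varphi_\lambda^\vee|\leq(2\pi)^{-d}\|\varphi\|_{L_1}$ while $\varphi_\lambda^\vee\to(2\pi)^{-d}$ pointwise, one obtains $m_\lambda^\vee\to m^\vee$ pointwise together with the $\lambda$--uniform domination $|m_\lambda^\vee(x)|\leq\|\varphi\|_{L_1}|m^\vee(x)|$, where $m^\vee\in L_1$ and $|m^\vee(x)|\leq C\min\{h^{-d},|x|^{-d},|x|^{-d-1}\}$ by Corollary \ref{CORmultiplierinversebounds}.

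The term-by-term identity is immediate: from $\widehat{\tau_a g}=e_{-a}\widehat g$ one has $e_{-j}m_\lambda=e_{-j}\widehat{m_\lambda^\vee}=\widehat{\tau_j m_\lambda^\vee}$, whence, by the multiplication formula (the definition of the distributional Fourier transform, applied here to the $L_2$ function $\tau_j m_\lambda^\vee$),
\[
\bracket{e_{-j}m_\lambda,\psi}=\bracket{\tau_j m_\lambda^\vee,\widehat\psi},\qquad j\in\Z^d.
\]
I would next verify that for each fixed $\lambda$ both series $\sum_j c_j\bracket{e_{-j}m_\lambda,\psi}$ and $\sum_j c_j\bracket{\tau_j m_\lambda^\vee,\widehat\psi}$ converge absolutely, so that summing the display against the $c_j$ is legitimate and yields $\bracket{\sum_j c_j e_{-j}m_\lambda,\psi}=\bracket{\sum_j c_j\tau_j m_\lambda^\vee,\widehat\psi}$. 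For the first series, $\bracket{e_{-j}m_\lambda,\psi}=\widehat{m_\lambda\psi}(j)$ with $m_\lambda\psi\in\schwartz$, so these numbers decay faster than any power of $|j|$ and the polynomial growth of $(c_j)$ is harmless. For the second, $\bigl|\bracket{\tau_j m_\lambda^\vee,\widehat\psi}\bigr|\leq\|\varphi\|_{L_1}\int|m^\vee(x-j)|\,|\widehat\psi(x)|\,dx=:\|\varphi\|_{L_1}b_j$, and splitting this integral at $|x|=|j|/2$ and using $m^\vee\in L_1$, the bound $|m^\vee(y)|\leq C|y|^{-d-1}$ for $|y|\geq1$, and the rapid decay of $\widehat\psi$ gives $b_j=O\bigl((1+|j|)^{-d-1}\bigr)$, which is summable over $\Z^d$; here it is needed that $c_j=f(j)$ is bounded (Lemma \ref{LEMcj}), not merely of polynomial growth.

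It remains to let $\lambda\to\infty$. By the definition \eqref{EQTemp} --- and the identification of $\sum_j\tau_{2\pi j}\widehat f\,m_\lambda$ with $\sum_j c_j e_{-j}m_\lambda$ recorded just before the lemma --- the left-hand side converges to $\bracket{\sum_j c_j e_{-j}m,\psi}$. On the right-hand side, for each fixed $j$ dominated convergence gives $\bracket{\tau_j m_\lambda^\vee,\widehat\psi}\to\bracket{\tau_j m^\vee,\widehat\psi}$ with majorant $\|\varphi\|_{L_1}|m^\vee(\cdot-j)|\,|\widehat\psi|\in L_1$, and a second application of dominated convergence to the sum over $j$, with the $\lambda$--independent summable majorant $\|c\|_{\ell_\infty}\|\varphi\|_{L_1}\,b_j$, gives $\sum_j c_j\bracket{\tau_j m_\lambda^\vee,\widehat\psi}\to\bracket{\sum_j c_j\tau_j m^\vee,\widehat\psi}$. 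Equating the two limits proves the lemma (and shows, incidentally, that the limit in \eqref{EQTemp} exists). The one genuinely delicate point is exactly this interchange of $\lim_\lambda$ and $\sum_j$ on the right, for which the decay estimate on $m^\vee$ from Corollary \ref{CORmultiplierinversebounds}, together with the boundedness of $(c_j)$, is essential; it is also what forces the passage through the mollification, since $m$ itself need not be smooth at the origin and hence $m\psi\notin\schwartz$ in general.
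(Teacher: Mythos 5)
Your proof is correct and follows essentially the same route as the paper's: mollify $m$, establish the term-by-term identity $\bracket{e_{-j}m_\lambda,\psi}=\bracket{\tau_j m_\lambda^\vee,\widehat\psi}$ (the paper does this via Fubini on the double integral, you via the multiplication formula), and pass to the limit in $\lambda$ by dominated convergence. Your version merely supplies more explicit summable majorants (via Corollary \ref{CORmultiplierinversebounds} and the boundedness of $c_j=f(j)$) for the interchange of $\lim_\lambda$ with $\sum_j$, which the paper leaves to the reader.
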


\begin{proof}
To calculate the left-hand side, we need to find $\inflim{\lambda}\bracket{\sum_jc_je_{-j}m_\lambda,\widehat\psi}$.  For fixed $\lambda$, this can be expressed as an integral since $e_{-j}m_\lambda\in L_1$.  Consequently, we have
$$\bracket{\sum_jc_je_{-j}m_\lambda,\widehat\psi} = \sum_jc_j\dint m_\lambda(x)e^{-ijx}\dint\psi(\xi)e^{-ix\xi}d\xi dx.$$
An application of Fubini's theorem (which is justified by H\"{o}lder's inequality since $(c_j)=(f(j))\in\ell_p$, $m\in L_1$, and $\widehat\psi\in L_p$ for all $p$) yields that the right-hand side is equal to
$$\dint\sum_jc_jm_\lambda^\vee(\xi-j)\psi(\xi)d\xi.$$ Finally, taking the limit as $\lambda\to\infty$ is justified by the dominated convergence theorem.
\end{proof}

Now the proof of \eqref{EQIFourierDistribution} follows from Lemmas \ref{LEMcj} and \ref{LEMAppendix}:
\begin{align*}
\bracket{\left(\sum_jf(j)\tau_jm^\vee\right)^\wedge,\psi} & = \bracket{\sum_jf(j)\tau_jm^\vee,\widehat{\psi}}\\
& = \bracket{\sum_jc_je_{-j}m,\psi}\\
& = \bracket{\sum_j\tau_j\widehat{f}m,\psi}.   
\end{align*}

\section*{Acknowledgements}

The authors would like to thank the anonymous referees whose efforts greatly improved this article.


\begin{thebibliography}{1}


\bibitem{AandS}
M. Abramowitz and I. A. Stegun (Eds.), {\em Handbook of mathematical functions: with formulas, graphs, and mathematical tables,} National Bureau of Standards Applied Mathematics Series, No. 55, Courier Dover Publications, 1972.

\bibitem{AG} A. Aldroubi and K. Gr\"{o}chenig, Nonuniform sampling and reconstruction in shift-invariant spaces, {\em SIAM Review}, \textbf{43}(4) (2001), 585-620.

\bibitem{Baxter}
B. J. C. Baxter, The asymptotic cardinal function of the multiquadratic
$\phi(r)=(r^2+ c^2)^\frac{1}{2}$ as $c\to\infty$, {\em Comput. Math. Appl.}, \textbf{24}(12), (1992), 1-6.

\bibitem{deBoorRon} C. de Boor and A. Ron, Fourier analysis of approximation orders from principal shift-invariant spaces, {\em Constr. Approx.} \textbf{8}(4) (1992), 427-462.

\bibitem{DDR} C. de Boor, R. A. DeVore, and A. Ron, Approximation from shift-invariant subspaces of $L_2(\R^d)$, {\em Trans. Amer. Math. Soc.} \textbf{341}(2) (1994), 787-806.

\bibitem{BuhmannOdd}
M. D. Buhmann, Multivariate interpolation in odd-dimensional Euclidean spaces using multiquadrics, {\em Constr. Approx.} \textbf{6}(1) (1990), 21-34.

\bibitem{Buhmann}
M. D. Buhmann, Multivariate cardinal interpolation with radial-basis functions,
{\em Constr. Approx.} \textbf{6}(3) (1990), 225-255.

\bibitem{BuhmannBook} M. D. Buhmann, {\em Radial basis functions: theory and implementations}, Cambridge Monographs on Applied and Computational Mathematics, Vol. 12. Cambridge University Press, 2003.

\bibitem{BuhmannDai} M. D. Buhmann and F. Dai, Pointwise approximation with quasi-interpolation by radial basis functions, {\em J. Approx. Theory} \textbf{192} (2015), 156-192.

\bibitem{BD} M. D. Buhmann and N. Dyn, Spectral convergence of multiquadric interpolation {\em Proc. Edinb. Math. Soc. (2)} \textbf{36} (1993), 319-333.

\bibitem{BuhmannMichelli}
M. D. Buhmann and C. A. Michelli, Multiquadric interpolation improved, {\em Computers Math. Applic.} \textbf{24}(2) (1992), 21-25. 

\bibitem{BuhmannRon} M. D. Buhmann and A. Ron, Radial base functions: $L_p$--approximation orders with scattered centres. No. CMS-94-5. Wisconsin Univ-Madison center for mathematical sciences, 1994.

\bibitem{Fornberg} T. Driscoll and B. Fornberg, Interpolation in the limit of increasingly flat radial basis functions, {\em Comput. Math. Appl.} \textbf{43} (2002), 413–422.

\bibitem{Fornberg2} B. Fornberg and G. B. Wright, Stable computation of multiquadric interpolants for all values of the shape parameter, {\em Comput. Math. Appl.} \textbf{48}(5), (2004), 853–867.

\bibitem{FornbergFlyer} B. Fornberg and N. Flyer, {\em A primer on radial basis functions with applications to the geosciences,} CBMS-NSF Regional Conference Series in Applied Mathematics, vol. 87, SIAM, 2015.

\bibitem{fjoshi} G. Friedlander and M. Joshi, {\em Introduction to the Theory of Distributions, 2nd edition}, Cambridge University Press, 1998.

\bibitem{Hamm}
K. Hamm, Approximation rates for interpolation of Sobolev
functions via Gaussians and allied functions, {\em J. Approx. Theory} \textbf{189} (2015), 101-122.

%\bibitem{HammSampta}
%K. Hamm, Sampling and recovery using multiquadrics, {\em 2015 International Conference on Sampling Theory and Applications (SampTA 2015)}: Washington D.C.

\bibitem{HammLedford}
K. Hamm and J. Ledford, Cardinal interpolation with general multiquadrics, {\em Adv. Comput. Math.} \textbf{42}(5) (2016), 1149-1186.

\bibitem{HammZonotopes} K. Hamm, Nonuniform sampling and recovery of bandlimited functions in higher dimensions, {\em J. Math. Anal. Appl.} \textbf{450}(2) (2017), 1459-1478.

\bibitem{hmnw} T. Hangelbroek, W. Madych, F. Narcowich and J. Ward, Cardinal
interpolation with Gaussian kernels, {\em J. Fourier Anal. Appl.} \textbf{18}
(2012), 67-86.

\bibitem{HoltzRon} O. Holtz and A. Ron, Approximation orders of shift-invariant subspaces of $W_2^s(\R^d)$, {\em J. Approx. Theory} \textbf{132}(1) (2005), 97-148.

\bibitem{jia} R.-Q. Jia, Shift-invariant spaces on the real line, {\em Proc. Amer. Math. Soc.} \textbf{125}(3) (1997), 785–793.

\bibitem{JM} R.-Q. Jia and C. A. Micchelli, Using the refinement equation for the construction of prewavelets. II. Powers of two,  in {\em Curves and Surfaces} (P.J. Laurent, A. Le M\'{e}haut\'{e}, and L.L. Schumaker, Eds.) pp. 209-246, Academic Press, New York, 1991.

\bibitem{Johnson} M. J. Johnson, On the approximation order of principal shift-invariant subspaces of $L_p(\R^d)$, {\em J. Approx. Theory} \textbf{91} (1997), 279-319.

\bibitem{Johnson2} M. J. Johnson, Approximation in $L_p(\R^d)$ from spaces spanned by the perturbed integer translates of a radial function, {\em J. Approx. Theory} \textbf{107}(2) (2000), 163-203.

\bibitem{Johnson3} M. J. Johnson, Scattered data interpolation from principal shift-invariant spaces, {\em J. Approx. Theory} \textbf{113} (2001), 172-188.

\bibitem{K} G. C. Kyriazis, Approximation from shift-invariant spaces, {\em Constr. Approx.} \textbf{11}(2) (1995), 141-164.

\bibitem{Ledford_Scattered} J. Ledford, Recovery of Paley-Wiener functions using scattered translates of regular interpolators, {\em J. Approx. Theory} \textbf{173} (2013), 1-13.

\bibitem{Ledford_Poisson} J. Ledford, Recovery of bivariate band-limited functions using scattered translates of the Poisson kernel {\em J. Approx. Theory} \textbf{189} (2015), 170-180.

\bibitem{Ledford}
J. Ledford, On the convergence of regular families of cardinal interpolators,
{\em  Adv. Comput. Math.} \textbf{41} (2015), 357-371.

\bibitem{Madych} W. R. Madych, Miscellaneous error bounds for multiquadric and related interpolators, {\em Comput. Math. Appl.} \textbf{24}(12) (1992), 121-138.

\bibitem{MadychNelson} W. R. Madych and S. A. Nelson, Bounds on multivariate polynomials and exponential error estimates for multiquadric interpolation, {\em J. Approx. Theory} \textbf{70} (1992), 94-114.

\bibitem{MadychPotter} W. R. Madych and E. H. Potter, An estimate for
multivariate interpolation, {\em J. Approx. Theory} \textbf{43} (1985), 132-139.

\bibitem{Mikhlin} S. G. Mikhlin, {\em Multidimensional singular integrals and integral equations.} Translated from the Russian by W. J. A. Whyte. Translation edited by I. N. Sneddon, Pergamon Press, Oxford-New York-Paris, 1965.

\begin{comment}
\bibitem{MRR} M. J. Marsden, F. B. Richards, and S. D. Riemenschneider, Cardinal spline interpolation operators on $\ell^p$ data, {\em Indiana Univ. Math. J.} \textbf{24}(1975), 677-689; Erratum, {\em ibid.}, \textbf{25}(1976), 919.
\end{comment}

\bibitem{RiemSiva} S. D. Riemenschneider and N. Sivakumar, On the
cardinal-interpolation operator associated with the one-dimensional
multiquadric, {\em East J. Approx.} \textbf{7}, no. 4 (1999), 485-514.

\bibitem{rs1} S. D. Riemenschneider and N. Sivakumar, On cardinal interpolation by Gaussian radial-basis functions: properties of fundamental functions and estimates for Lebesgue constants, {\em J. Anal. Math.} \textbf{79} (1999), 33-61.

\bibitem{rs2} S. D. Riemenschneider and N. Sivakumar, Gaussian radial basis functions: Cardinal interpolation of $\ell_p$ and power growth data, {\em Adv. Comput. Math.} \textbf{11} (1999), 229-251.

\bibitem{rs3} S. D. Riemenschneider and N. Sivakumar, Cardinal interpolation by Gaussian functions: A survey, {\em J. Analysis} \textbf{8} (2000), 157-178.

\bibitem{Rudin} W. Rudin, {\em Functional Analysis}, International Series in Pure and
Applied Mathematics, McGraw-Hill, Inc., New York, N.Y., 1991.

\bibitem{ss} Th. Schlumprecht  and N. Sivakumar, On the sampling and recovery of bandlimited functions via scattered translates of the Gaussian,  {\em J. Approx. Theory} \textbf{159} (2009), 128-153.

\bibitem{Schoenberg} I. J. Schoenberg, {\em Cardinal Spline Interpolation}, Conference Board of the Mathematical Sciences Regional Conference Series in Applied Mathematics, Vol. 12. Society for Industrial and Applied Mathematics, Philadelphia, Pa. 1973.

\bibitem{siva} N. Sivakumar, A note on the Gaussian cardinal-interpolation operator, {\em Proc. Edinb. Math. Soc. (2)} \textbf{40} (1997), 137-150.


\bibitem{Wendland}
H. Wendland, {\em Scattered Data Approximation,} Cambridge Monographs on Applied and Computational Mathematics, Vol. 17. Cambridge
University Press, 2005.

\end{thebibliography}
\end{document}